\newtheorem{theorem}{Theorem}[section]
\newtheorem{lemma}[theorem]{Lemma}
\newtheorem{proposition}[theorem]{Proposition}
\newtheorem{corollary}[theorem]{Corollary}
\newtheorem*{problem*}{Problem}
\newcommand{\ip}[2]{{\left<#1,#2\right>}}
\newcommand{\zb}{{\mathcal Z(B)}}
\newcommand{\zbp}{{\mathcal Z(B')}}
\newcommand{\ran}{{\mathrm{ran}\,}}
\begin{document}

\title[Rational dilation and constrained algebras]{Rational dilation
  problems associated with constrained algebras}

\author[Dritschel]{Michael A.~Dritschel}
\address{School of Mathematics, Statistics {\&} Physics\\
  Newcastle University \\
  Newcastle upon Tyne\\
  NE1 7RU\\
  UK } \email{michael.dritschel@ncl.ac.uk}

\author[Undrakh]{Batzorig Undrakh}
\address{Institute of Mathematics \\
  National University of Mongolia \\
  Ulaanbaatar \\
  Mongolia} \email{undrakhbatzorig@gmail.com}

\thanks{This work is in part from the PhD dissertation of Batzorig
  Undrakh, under the supervision of Michael Dritschel.}

\subjclass[2010]{47A20 (Primary), 30C40, 30E05, 46E22, 46E25, 46E40,
  46L07, 47A25, 47A48, 47L55 (Secondary)}

\date{\today}

\keywords{dilations, inner functions, Herglotz representations,
  completely contractive representations, realizations,
  Nevanlinna-Pick interpolation}

\begin{abstract}
  A set $\Omega$ is a spectral set for an operator $T$ if the spectrum
  of $T$ is contained in $\Omega$, and von~Neumann's inequality holds
  for $T$ with respect to the algebra $R(\Omega)$ of rational
  functions with poles off of $\overline{\Omega}$.  It is a complete
  spectral set if for all $r\in \mathbb N$, the same is true for
  $M_r(\mathbb C)\otimes R(\Omega)$.  The rational dilation problem
  asks, if $\Omega$ is a spectral set for $T$, is it a complete
  spectral set for $T$?  There are natural multivariable versions of
  this.  There are a few cases where rational dilation is known to
  hold (eg, over the disk and bidisk), and some where it is known
  to fail, for example over the Neil parabola, a distinguished variety
  in the bidisk.  The Neil parabola is naturally associated to a
  constrained subalgebra of the disk algebra $\mathbb C + z^2
  A(\mathbb D)$.  Here it is shown that such a result is generic for a
  large class of varieties associated to constrained algebras.  This
  is accomplished in part by finding a minimal set of test functions.
  In addition, an Agler-Pick interpolation theorem is given and it is
  proved that there exist Kaijser-Varopoulos style examples of
  non-contractive unital representations where the generators are
  contractions.
\end{abstract}

\maketitle

\section{Introduction}
\label{sec:introduction}

It was first recognized in the 1950s that there is a deep connection
between the fact that over the unit disk $\mathbb D$ of the complex
plane $\mathbb C$, von~Neumann's inequality holds for any Hilbert
space contraction operator, and that a contraction can be dilated to
unitary operator (the Sz.-Nagy dilation theorem).  A similar
phenomenon is observed for a commuting pair of contractions, which
according to And\^o's theorem, dilate to a commuting pair of unitary
operators.

More generally, an operator $T$ in $\mathcal{B(H)}$, the bounded linear
operators on a Hilbert space $\mathcal H$, is said to have a
\emph{rational dilation} (with respect to a compact set
$\overline{\Omega}$) if there is a Hilbert space $\mathcal K \supset
\mathcal H$ and a normal operator $N\in \mathcal{B(K)}$ with spectrum
in the boundary of $\overline{\Omega}$ such that $f(T) = P_\mathcal H
r(N) |_\mathcal H$ for all $f\in R(\overline{\Omega})$, the rational
functions with poles off of $\overline{\Omega}$.

\smallskip

There is a natural multivariable version of this.

\begin{problem*}[Rational dilation problem\footnote{\label{fn:1}This
    problem is usually attributed to Halmos, and while this seems
    plausible, we have been unable to find a reference!}]
  \label{prob:rat_dil_prob}
  Let $\Omega$ be a domain in $\mathbb C^n$ with compact closure and
  suppose that $T$ is a commuting tuple of bounded operators on a
  Hilbert space $\mathcal H$ with spectrum contained in
  $\overline{\Omega}$.  Furthermore, assume that for every $f\in
  R(\Omega)$, the set of rational functions with poles off of
  $\overline{\Omega}$, the von~Neumann inequality holds; that is,
  $\|f(T)\| \leq \|f\|_\infty$, where $\|\cdot\|_\infty$ is the
  supremum norm over $\overline{\Omega}$.  Does there exist a Hilbert
  space $\mathcal K \supset \mathcal H$ and commuting tuple of normal
  operators $N$ on $\mathcal K$ with spectrum in the boundary of
  $\Omega$ such that $f(T) = P_\mathcal H r(N) |_\mathcal H$ for all
  $f\in R(\overline{\Omega})$?  That is, does $T$ have a
  \emph{rational dilation} to $N$?
\end{problem*}

Here Arveson~\cite{MR52:15035} is followed in defining the spectrum of
a tuple $T$ to be $\sigma(T) := \{\lambda\in\mathbb C^n: \text{ for }
p:\mathbb C^n \to \mathbb C \text{ a polynomial, } p(\lambda)\in
\sigma(p(T)) \}$.  He showed that this set is non-empty and compact,
and that the spectral mapping theorem holds for all non-constant
rational functions with poles off of $\sigma(T)$.

When the von~Neumann inequality holds for an operator (or tuple of
operators) $T$ as in the statement of the rational dilation problem,
$\overline{\Omega}$ is said to be a \emph{spectral set} for $T$.  It
is not difficult to see that if $T$ has a rational dilation, then
$\overline{\Omega}$ is a spectral set for $T$; indeed, one also has
the von~Neumann inequality for $f\in R(\Omega)\otimes M_r(\mathbb C)$,
the matrix valued rational functions with poles off of
$\overline{\Omega}$, for any finite $r$.  Hence $\overline{\Omega}$
is a \emph{complete spectral set} for $T$.

A nontrivial fact, also due to Arveson~\cite{MR52:15035}, is that $T$
has a rational dilation if and only if $\overline{\Omega}$ is a
complete spectral set for $T$.  Thus the rational dilation problem can
be reformulated as: \emph{If $\overline{\Omega}$ is a spectral set for
  $T$, is it a complete spectral set for $T$?}

Given a set $X \subset \mathbb C^d$, a function $f: X \to \mathbb C$
is \emph{analytic} if for every $x\in X$, there is an open
neighborhood of $x$ to which $f$ extends analytically.  Denote by
$A(\Omega)$ the subalgebra of functions in $C(\overline{\Omega})$
which are analytic on $\Omega$.  At least over subsets of $\mathbb C$,
there are various conditions which imply that $R(\Omega)$ is dense in
$A(\Omega)$; for example, if $\Omega$ is finitely connected, then this
is true.  In this paper we concentrate on the setting where $\Omega$
is the intersection of a variety with ${\overline{\mathbb D}}^n$.
Since the variety is the zero set of a polynomial, similar reasoning
as in the one variable setting will dictate that $R(\Omega)$ is dense
in $A(\Omega)$.  How a bounded representation acts on $R(\Omega)$ is
determined by its action on the generators, so such a representation
extends continuously to $A(\Omega)$.  This gives yet another
formulation of the rational dilation problem over suitably nice
$\Omega$: \emph{Is every contractive representation of $A(\Omega)$
  completely contractive?}

An implication of the Sz.-Nagy dilation theorem is that contractive
representations of $A(\mathbb D)$ are completely contractive, and
And\^o's theorem allows us to draw the same conclusion for $A(\mathbb
D^2)$.  So for $\Omega = \mathbb D$ or $\mathbb D^2$, rational
dilation holds.  A more substantial argument is needed to prove that
rational dilation holds for annuli~\cite{MR87a:47007} (but see
also~\cite{MR3584680}), and intriguingly, there is a way of mapping an
annulus to a distinguished variety of the bidisk~\cite{MR0241629}
(that is, a variety $\mathcal V$ which intersects $\mathbb D^2$ and
satisfies $\mathcal V \cap \partial \mathbb D^2 \subset \mathbb T^2$,
which is the \emph{distinguished}, or \emph{\v{S}ilov boundary} of
$\mathbb D^2$).  Thus rational dilation holding for annuli is
equivalent to it holding for a certain family of distinguished
varieties in $\mathbb D^2$.  It is natural to wonder if this is a
legacy of rational dilation holding over $\mathbb D^2$, and so to
speculate that perhaps rational dilation also holds for other
distinguished varieties in~$\mathbb D^2$.

Alas, this is too much to hope for.  In~\cite{MR3584680}, it was
proved that rational dilation fails for the Neil parabola $\mathcal N
= \{(z,w)\in\mathbb D^2: z^2=w^3\}$.  The techniques are indirect.  As
with an annulus, one can associate $A(\mathcal N)$ to another algebra.
In this case, there is a complete isometry mapping $A(\mathcal N)$
onto $A_{z^2}(\mathbb D) = \mathbb C + z^2 A(\mathbb D)$, the
subalgebra of $A(\mathbb D)$, the functions of which have first
derivative vanishing at $0$.  It is shown in~\cite{MR3584680} that
this algebra has a contractive representation which is not
$2$-contractive, and so not completely contractive.

In this paper, we show that rational dilation fails without fail for
algebras $A(\mathcal V_B)$ of functions which are analytic and
continuous up to the boundary on distinguished varieties $\mathcal
V_B$ of the $N$-disk associated to finite Blaschke products $B$ with
$N \geq 2$ zeros.  We also prove that it fails on associated
distinguished varieties of the $2$-disk, at least if $B$ has two or
more distinct zeros all of the same multiplicity.  This enormously
increases the set of examples where one can answer such questions.

The methods used were pioneered in~\cite{MR2946923}
and~\cite{MR3584680}, though they also have predecessors in
~\cite{MR2375060}, \cite{MR2163865} and \cite{MR2389623}.  The first
hurdle to be overcome is the construction of a minimal set of test
functions for algebras of the form $\mathcal A_B = \mathbb C +
B(z)A(\mathbb D)$, as in~\cite{MR2946923}.  Since it has $N$
generators, this algebra is completely isometrically isomorphic to
$A(\mathcal V_B)$, $\mathcal V_B$ a distinguished variety of the
$N$-disk.  It is also possible to consider the subalgebra $\mathcal
A_B^0$ of $\mathcal A_B$ generated by the first two generators, $B$
and $zB$, of $\mathcal A_B$.  This is completely isometrically
isomorphic to a subalgebra $\mathcal A(\mathcal N_B)$ on a
distinguished variety of the bidisk.  The algebras $\mathcal A_B$ were
already studied from the dual viewpoint of families of kernels
in~\cite{MR2514385}, while we present here the first systematic study
of the algebras $\mathcal A_B^0$.

For both $\mathcal A_B$ and $\mathcal A_B^0$ (with the condition on
the zeros of $B$ mentioned above), we construct an example of a
contractive representation which is not completely contractive,
yielding rational dilation results on the associated varieties.  The
strategy for doing this goes back to~\cite{MR2163865}, though was
undoubtedly familiar to Jim Agler even before this.  One shows that
there is a contractive representation which is not completely
contractive.  This is done by proving that certain matrix valued
measures arising in the so-called Agler decomposition for matrix
valued functions must diagonalize if rational dilation is to hold.
Then it is a matter of finding a function for which this does not
happen.

While it is well known that for $N>2$, $A(\mathbb D^N)$ itself has
contractive representations which are not completely contractive, it
is not \emph{a priori} the case that such a contractive representation
of $A(\mathbb D^N)$ when restricted to a subalgebra is also not
completely contractive.  As a trivial example, consider $A(\mathbb
D^2)$ in $A(\mathbb D^3)$.  Likewise, simply knowing that a function
algebra has a contractive representation which is not completely
contractive does not necessarily imply the same is true for any
algebra containing it.  The Neil algebra as a subalgebra of
$A(\mathbb D^2)$ is a case in point.

Various noteworthy observations are made in the course of the paper.
For example, for both $\mathcal A_B$ and $\mathcal A_B^0$ minimal sets
of test functions are constructed (for any $B$ with two or more
zeros), yielding optimal forms of Agler-Pick interpolation theorems.
Kaiser-Varopoulos type examples of unital representations which are
contractive on the generators of these algebras yet which fail to be
contractive representations are also found.  There is in addition a
characterization of completely contractive representations along the
line of the Sz.-Nagy dilation theorem, much like that proved by
Broschinski for the Neil algebra~\cite{MR3208801}.

The work is presented in the following order.
Section~\ref{sec:dist-vari-assoc} introduces the distinguished
varieties associated to Blaschke products on which we will study the
rational dilation problem, while Section~\ref{sec:rati-dilat-probl}
presents the rational dilation problem.
Section~\ref{sec:test-functions} outlines the notion of test functions
and their application to realization and interpolation problems.  We
show that there is no loss in generality in restricting to Blaschke
products with at least one zero at $0$.  The Herglotz representation
plays a central role, and there is a closed cone of positive measures
which is fundamental.  The extreme rays are connected with certain
probability measures which, after a Cayley transform, yield a set of
candidates for the test functions, as we see in
Section~\ref{sec:extreme-rays}.  The next, and arguably most
challenging step, is to show that the set of test functions found is
in some sense minimal.  This is addressed in
Section~\ref{sec:descr-test-funct}, and then applied in
Section~\ref{sec:compl-contr-repr} to give the Kaijser-Varopoulos
style representation mentioned above and a Sz.-Nagy type dilation
theorem.  Finally, in Section~\ref{sec:contractive-but-not-cc} we
tackle the rational dilation problem.  The paper concludes with some
remarks.

\section{Distinguished varieties associated to Blaschke products}
\label{sec:dist-vari-assoc}

We begin by describing the distinguished varieties in the bidisk
considered in this paper.

The following notation will be useful.  For $x=(x_1,\dots,x_n) \in
\mathbb C^n$, define $S_0(x) = 1$ and
\begin{equation*}
  S_k(x) = (-1)^k \sum_{1\leq x_1 \leq \cdots \leq x_n} x_{i_1}
  \cdots x_{i_n}, \quad k = 1,\dots , n,
\end{equation*}
the $k$th (signed) symmetric sum of the elements of $x$.  If $k > n$,
define $S_k(x) = 0$.  Then
\begin{equation}
  \label{eq:1}
  \prod_{j=1}^n (z-x_j) = \sum_{k=0}^n S_k(x) z^{n-k} \qquad
  \text{and} \qquad  \prod_{j=1}^n (1-\overline{x_j}z) =
  \sum_{k=0}^n S_k(\overline{x}) z^k,
\end{equation}
where $\overline{x} = ({\overline{x_1}},\dots,{\overline{x_n}})$.
Also define $S_0^{-i}(x) = -1$ and
\begin{equation*}
  S_k^{-i}(x) = S((x_1,\dots , x_{i-1},-x_i,x_{i+1},\dots, x_n)),
  \qquad k=1,\dots , n.
\end{equation*}
Then $S_n^{-i}(x) = -S_n(x)$.  For $x=\lambda \subset \mathbb T^n$,
\begin{equation*}
  S_k(\lambda) = S_n(\lambda)S_{n-k}(\overline{\lambda})
  \qquad\text{and}\qquad
  S_k^{-i}(\lambda) =
  -S_n^{-i}(\lambda)S_{n-k}^{-i}(\overline{\lambda}).
\end{equation*}

Let
\begin{equation*}
  B(z) = \prod_{k=1}^N \frac{z-\alpha_k}{1-\overline{\alpha_k}z},
  \qquad z\in \overline{\mathbb D},
\end{equation*}
be a Blaschke product with at least two not necessarily distinct
zeros.  Define $x = B(z)$, $y = zB(z)$.  Then the pair $(x,y) \in
\overline{\mathbb D}^2$.  Since
\begin{equation}
  \label{eq:2}
  B(z)^{N+1}\prod_{k=1}^N (1-\overline{\alpha_k}z) =
  B(z)^N\prod_{k=1}^N (z-\alpha_k),
\end{equation}
the pair $(x,y)$ satisfies the polynomial identity $P(x,y) = 0$, with
\begin{equation*}
  \begin{split}
    P(x,y) & = x\prod_{k=1}^N (x-\overline{\alpha_k} y)
    - \prod_{k=1}^N (y-\alpha_k x) \\
    & = \sum_{k=0}^N \left(S_k(\overline{\alpha}) x^{N-k+1} y^k -
      S_k(\alpha) x^k y^{N-k}\right)
  \end{split}
\end{equation*}

Now suppose $(x,y)\in\mathbb D^2$ is any point satisfying $P(x,y) =
0$.  If $x=0$, then $y^N = 0$, and thus $y = 0$.  So assume $x\neq 0$.
Letting $z = y/x$, it follows that $x = B(z)$ and $y = zB(z)$, and so
$x$ and $y$ have the form indicated above.  Furthermore, since $|B(z)|
\leq 1$ if and only if $|z| \leq 1$, $(x,y)\in \overline{\mathbb D}^2$
if and only if $z\in \overline{\mathbb D}$.

The locus described by $P(x,y) = 0$ defines a variety in $\mathbb
C^2$.  Furthermore, since $|x| = |B(z)| = 1$ implies that $|z| = 1$,
$|x| = 1$ implies $|y| = 1$.  Likewise, if $|y| = 1$, then the modulus
of the Blaschke product $zB(z)$ is $1$, and so once again $|z| = 1$,
implying that $|x| = 1$.  Hence this variety intersects the boundary
of $\overline{\mathbb D}^2$ in $\mathbb T^2$, which is the \v{S}ilov
or \emph{distinguished boundary}; that is, $P(x,y) = 0$ defines a
\emph{distinguished variety}.  Write $\mathcal N_B$ for $\{(x,y) \in
\overline{\mathbb D}^2 : P(x,y) = 0\}$.

The variety $\mathcal N_B$ has a singularity solely at $(0,0)$.  If
all of the zeros of $B$ are distinct, then there is an $(N-1)$-fold
crossing at this point.  At the other extreme, if all the zeros are
the same, there is a cusp (for example, this is what happens with the
Neil parabola, where $B(z) = z^2$).  Intermediate cases give rise to
mixtures of these.

The general theory of distinguished varieties of the bidisk as laid
out by Agler and McCarthy in \cite{MR2231339} (see also,
\cite{MR2661491}) shows that such varieties have a \emph{determinantal
  representation}.

\begin{theorem}[\cite{MR2231339}]
  \label{thm:det-repn-for-dist-vars}
  Let $\mathcal V$ be a distinguished variety, defined as zero set of
  a polynomial $p \in \mathbb{C}[x,y]$ of minimal degree $(m, n)$.
  Then, there is an $(m+n)\times (m+n)$ unitary matrix $U$, written as
  $U= \begin{pmatrix} A & B\\ C & D \end{pmatrix}:\mathbb{C}^{m}
  \oplus \mathbb{C}^n \to \mathbb{C}^{m} \oplus \mathbb{C}^n$, such
  that
  \begin{enumerate}
  \item $A$ has no unimodular eigenvalues,
  \item $p(x,y)$ is a constant multiple of
    \begin{equation*}
      \det \begin{pmatrix} D - x I_n & x C \\ B & y A - I_m
      \end{pmatrix}, \qquad\text{and}
    \end{equation*}
  \item for the rational matrix valued inner function $\Psi(y) = D + y
    C \left(I_m - y A \right)^{-1}B$,
    \begin{equation*}
      \mathcal V = \left\{ (x,y) \in \overline{\mathbb{D}}^2 :
        \det(xI_n - \Psi(y))=0 \right\}.
    \end{equation*}
  \end{enumerate}
  Moreover, if\, $\Psi$ is a matrix valued rational inner function on
  $\overline{\mathbb{D}}$, then
  \begin{equation*}
    \left\{ (x, y) \in \overline{\mathbb{D}}^2 : \det(xI_n -
      \Psi(y))=0 \right\}
  \end{equation*}
  is a distinguished variety.
\end{theorem}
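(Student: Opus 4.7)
My plan is to treat the two implications separately, since the converse (that any rational matrix inner function yields a distinguished variety) is essentially a transfer-function calculation, whereas the forward direction (that every distinguished variety admits a unitary realization) is the substantive content and requires building an $n\times n$ matrix inner function out of the $n$ branches of $\mathcal V$ over the second coordinate. Throughout I would identify $\Psi(y) = D + yC(I_m - yA)^{-1}B$ with the transfer function of the colligation $U = \begin{pmatrix} A & B \\ C & D \end{pmatrix}$, and I would freely use the Schur complement identity
\begin{equation*}
  \det\begin{pmatrix} D - xI_n & xC \\ B & yA - I_m \end{pmatrix}
  = \pm\det(yA - I_m)\,\det\bigl(xI_n - \Psi(y)\bigr),
\end{equation*}
which immediately reduces statement (2) to statement (3) on the open set where $yA - I_m$ is invertible.

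For the converse, a short computation using unitarity of $U$ gives the standard identity $I_n - \Psi(y)^*\Psi(y) = (1-|y|^2)\,B^*(I_m - \overline y A^*)^{-1}(I_m - yA)^{-1}B$, so $\Psi$ is unitary on $\mathbb T$ and a strict contraction on $\mathbb D$ (the latter using that $A$ has no unimodular eigenvalues). Hence the eigenvalues of $\Psi(y)$ are unimodular exactly when $|y|=1$ and strictly inside $\mathbb D$ when $|y|<1$, which forces the variety $\{\det(xI_n-\Psi(y))=0\}$ to be contained in $\overline{\mathbb D}^2$ with boundary intersection in $\mathbb T^2$. That it is algebraic and defines a variety is immediate from the determinant being a polynomial in $(x,y)$.

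For the forward direction I would start from the distinguished variety $\mathcal V$ with defining polynomial $p$ of minimal bidegree $(m,n)$ and consider the projection $\pi_2:\mathcal V \to \overline{\mathbb D}$, $(x,y)\mapsto y$. This is an $n$-sheeted branched covering, and over $\mathbb T$ the fibers lie entirely in $\mathbb T$, so the $n$ branches of $x$ give unimodular values for unimodular $y$. The goal is to package these branches as the eigenvalue curves of a single analytic $n\times n$ matrix function $\Psi$ on $\mathbb D$ that is unitary on $\mathbb T$, after which rational matrix inner functions admit a minimal unitary realization $\Psi(y) = D + yC(I_m - yA)^{-1}B$ by the standard Sz.-Nagy--Foias/de~Branges--Rovnyak theory, and minimality forces $A$ to have no unimodular eigenvalues (else it would exhibit a common zero of $\Psi$ and $I - \Psi\Psi^*$ contradicting innerness).

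The central obstacle is the construction of $\Psi$ itself: one must exhibit an $n\times n$ rational inner function whose characteristic polynomial, up to a unit, is the given polynomial $p(x,y)$ in $x$. I would follow Agler--McCarthy by picking any $y_0 \in \mathbb D$ away from the branch locus, using the $n$ analytic local branches $\psi_1(y),\dots,\psi_n(y)$ of $x$ near $y_0$ as the starting data, and then showing via the unimodular boundary behavior and the absence of singularities inside $\mathbb D^2$ that the symmetric-function coefficients extend to rational functions on all of $\mathbb D$ whose combined matrix of elementary symmetric functions admits an inner matrix realization. Minimality of the bidegree of $p$ ensures that the resulting realization is simultaneously observable and controllable, hence the colligation $U$ is unique up to unitary equivalence, and the bidegree of the block determinant matches $(m,n)$ on the nose. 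The detailed function-theoretic argument building the matrix inner function from the branch data is where the bulk of the work lies, and it is this step for which I would refer to the Agler--McCarthy construction rather than redo from scratch.
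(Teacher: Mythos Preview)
The paper does not give its own proof of this theorem: it is quoted verbatim as a result of Agler and McCarthy~\cite{MR2231339}, with the citation in the theorem header and no accompanying proof. So there is nothing in the paper to compare your sketch against.

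That said, your outline is a faithful summary of the Agler--McCarthy argument: the converse direction via the transfer-function identity and unitarity of $U$, and the forward direction by packaging the $n$ branches of the projection into a rational matrix inner function and then invoking a minimal unitary realization. Since you explicitly defer the branch-to-matrix-inner-function construction to the Agler--McCarthy paper, your proposal is really a pointer to the same source the present paper cites, which is appropriate here.
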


A straightforward calculation shows that a determinantal
representation for $\mathcal N_B$ is obtained by choosing
\begin{equation*}
  \Psi(y) = \begin{pmatrix}
    0 & -y & 0 & \cdots & 0 & 0\\
    0 & \overline{\alpha_1}y & -y & 0 & \cdots & 0\\
    0 & 0 & \overline{\alpha_2}y & -y & \ddots & \vdots\\
    \vdots & \ddots & \ddots & \ddots & \ddots& \vdots\\
    0 & \ddots & \ddots & \ddots & \overline{\alpha_{N-1}}y & -y \\
    (-1)^N & 0 & \cdots & \cdots & 0 & \overline{\alpha_N}y
  \end{pmatrix}
  T^{-1},
\end{equation*}
where
\begin{equation*}
  T =
  \begin{pmatrix}
    1 &  -\alpha_1 & 0 & \cdots & \cdots & 0\\
    0 & 1 &  -\alpha_2 & 0 & \cdots & \vdots \\
    \vdots & \ddots & \ddots & \ddots & \ddots & \vdots \\
    \vdots & \ddots & \ddots & \ddots & \ddots & 0 \\
    \vdots & \ddots & \ddots & 0 & 1 &  -\alpha_n \\
    0 & \cdots & \cdots & \cdots & 0 & 1
  \end{pmatrix}.
\end{equation*}

By defining variables $x_j = z^{j-1}B$, $j = 1, \dots , N$, and again
using \eqref{eq:2}, it is not hard to work out that there is an
associated distinguished variety $\mathcal V_B$ in ${\overline{\mathbb
    D}}^N$.  There will in general be multiple varieties which can be
described with these variables from \eqref{eq:2}.  They are obtained
one from the other via the identities $x_jx_{N-j}=x_ix_{N-i}$.

Obviously, when $N$ is sufficiently large, intermediate cases could be
considered, associated to algebras on distinguished varieties in
${\overline{\mathbb D}}^n$, $2< n < N$.  The techniques needed to
handle these are identical to those presented for $\mathcal A_B^0$ and
$\mathcal A_B$.

Recall the notation $A(\mathcal V_B)$ for the algebra of analytic
functions on $\mathcal V_B \cap{\overline{D}}^N$ which extend
continuously to the boundary with the supremum norm, and $\mathcal A_B
= \mathbb C + B\,A(\mathbb D)$ the associated subalgebra of $A(\mathbb
D)$.  If $B(z) = \prod_1^N\frac{z-\alpha_j}{1 -
  \overline{\alpha_j}z}$, then $\mathcal A_B = \mathbb C + \prod_1^N
(z-\alpha_j)\,A(\mathbb D)$ since $\prod_1^N (1 -
\overline{\alpha_j}z) \in A(\mathbb D)$.  Thus $\mathcal A_B$ is
generated by $\{Bz^j\}_{j=0}^{N-1}$.  In connection with $A(\mathcal
N_B)$, we are also interested in a subalgebra of $\mathcal A_B$
generated by $B$ and $zB$, denoted by $\mathcal A_B^0$.

\begin{theorem}
  \label{thm:algebras-isom-isom}
  The algebra $A(\mathcal V_B)$ is completely isometrically isomorphic
  to the algebra $\mathcal A_B$.  The algebra $A(\mathcal N_B)$ is
  completely isometrically isomorphic to the algebra $\mathcal A_B^0$,
  which consists of those functions in $\mathcal A_B$ for which the
  coefficients of terms of the form $z^i B^j$, $j = 0,\dots, N-2$ and
  $i = j+1, \dots N-1$, are $0$.  The algebra $\mathcal A_B^0$ is of
  codimension $N(N-1)/2$ in $A(\mathbb D)$ and contains $\mathbb C +
  B^{N-1} A(\mathbb D)$; so in particular, when $N = 2$, $\mathcal
  A_B^0 = \mathcal A_B$.
\end{theorem}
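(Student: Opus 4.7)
The plan is to build both isomorphisms from a common parametrization of the variety by the disk, then unpack the algebraic consequences for $\mathcal A_B^0$. Define $\pi:\overline{\mathbb D}\to\mathbb C^N$ by $\pi(z)=(B(z),zB(z),\ldots,z^{N-1}B(z))$, and $\pi_0(z)=(B(z),zB(z))$ for the bivariate case. A direct application of \eqref{eq:2} (as in the derivation of $P(x,y)$) shows $\pi(\overline{\mathbb D})\subseteq \mathcal V_B\cap\overline{\mathbb D}^N$; conversely, any $(x_1,\ldots,x_N)\in \mathcal V_B\cap\overline{\mathbb D}^N$ with $x_1\neq 0$ equals $\pi(x_2/x_1)$ uniquely, while $x_1=0$ forces the whole tuple to vanish via the relations $x_1 x_{k+1}=x_2 x_k$ valid on $\mathcal V_B$. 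Hence $\pi$ is a continuous surjection onto $\mathcal V_B\cap\overline{\mathbb D}^N$, biholomorphic off the zero set $\{\alpha_1,\ldots,\alpha_N\}$ of $B$, which it collapses to the singular point $(0,\ldots,0)$; the same analysis handles $\pi_0$.

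The pullback $\Phi(f)=f\circ\pi$ is a unital algebra homomorphism from $A(\mathcal V_B)$ into $A(\mathbb D)$, injective because $\pi$ is surjective, and completely isometric because $\|F\|_\infty$ on $\mathcal V_B\cap\overline{\mathbb D}^N$ agrees entrywise with $\|F\circ\pi\|_\infty$ on $\overline{\mathbb D}$ for any matrix-valued $F$. To see $\Phi(f)\in\mathcal A_B$, observe that $f$ extends to a holomorphic germ at $0\in\mathbb C^N$ (by the definition of analyticity at the singular point); substituting $\pi$ into its Taylor expansion produces $f\circ\pi(z)-f(0)=B(z)\,k_j(z)$ near each $\alpha_j$ for a germ $k_j$ holomorphic there, because every nonconstant monomial $\prod_\ell (z^{\ell-1}B(z))^{\beta_\ell}$ contains a factor of $B$. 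Since $\Phi(f)-f(0)$ is also holomorphic on $\mathbb D\setminus\{\alpha_1,\ldots,\alpha_N\}$, Riemann removability upgrades this to $(\Phi(f)-f(0))/B\in A(\mathbb D)$, placing $\Phi(f)\in\mathbb C+BA(\mathbb D)=\mathcal A_B$. Identical reasoning puts $\Phi_0(f)$ in the closed algebra generated by $B$ and $zB$, namely $\mathcal A_B^0$.

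The main obstacle is the reverse direction: showing $\Phi$ surjects onto $\mathcal A_B$ and $\Phi_0$ onto $\mathcal A_B^0$. Given $g=c+Bh\in\mathcal A_B$, define $\tilde g$ on $\mathcal V_B\cap\overline{\mathbb D}^N$ by $\tilde g(0)=c$ and $\tilde g(\pi(z))=g(z)$ otherwise; $g-c$ vanishes to the required multiplicity at each zero of $B$, so $\tilde g$ is well-defined, continuous, and (by biholomorphicity of $\pi$ off $0$) holomorphic there. The delicate step is producing an ambient holomorphic extension at the singular point. I would argue via Taylor-matching along the branches: near $\alpha_j$, $\pi(z)=B(z)v_j+O(B(z)^2)$ with $v_j=(1,\alpha_j,\ldots,\alpha_j^{N-1})$, so the $N$ branches head in the Vandermonde directions, linearly independent for distinct zeros. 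The order-by-order Vandermonde systems expressing the Taylor data of $g$ along each branch in terms of the coefficients of a candidate holomorphic germ at $0\in\mathbb C^N$ are then solvable, yielding the extension; confluent Hermite interpolation handles the multiple-zero case. The same argument with branches $(1,\alpha_j)\in\mathbb C^2$ works for $\Phi_0$.

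For the structural claims about $\mathcal A_B^0$, note first that $BA(\mathbb D)=pA(\mathbb D)$ for the monic polynomial $p=\prod(z-\alpha_j)$ (since $1/\prod(1-\overline{\alpha_j}z)\in A(\mathbb D)$), whence $A(\mathbb D)/BA(\mathbb D)\cong\mathbb C[z]/(p)\cong\mathbb C^N$. Iteratively dividing $f\in A(\mathbb D)$ by $B$ yields the normalized expansion $f=\sum_{j\geq 0}\sum_{i=0}^{N-1}c_{j,i}z^iB^j$ with continuous coefficient functionals $c_{j,i}$. Membership in $\mathcal A_B$ amounts to $c_{0,i}=0$ for $i=1,\ldots,N-1$. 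Each polynomial in $B,zB$ is a linear combination of $B^a(zB)^b=z^bB^{a+b}$, whose normalized expansion is supported on the cone $i\leq j$; continuity of the coefficient functionals carries this support condition to the closure, showing $\mathcal A_B^0\subseteq\{f:c_{j,i}(f)=0\text{ for forbidden }(j,i)\}$. Within the normalized range $0\leq i\leq N-1$, the condition $i>j$ occurs precisely when $0\leq j\leq N-2$ and $j+1\leq i\leq N-1$, giving $\sum_{j=0}^{N-2}(N-1-j)=N(N-1)/2$ linearly independent constraints; equality of the two sides (and hence the claimed codimension in $A(\mathbb D)$) follows by comparing this with the dimension count given by the surjectivity of $\Phi_0$. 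The containment $\mathbb C+B^{N-1}A(\mathbb D)\subseteq\mathcal A_B^0$ is immediate since such functions have normalized expansions supported on $\{(0,0)\}\cup\{(j,i):j\geq N-1\}$, avoiding the forbidden indices. When $N=2$ the only forbidden index is $(0,1)$, which coincides with the defining constraint of $\mathcal A_B$, so $\mathcal A_B^0=\mathcal A_B$.
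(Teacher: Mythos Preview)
Your construction of $\Phi$ and $\Phi_0$ as pullbacks along the parametrization, together with the complete-isometry argument via the distinguished boundary, matches the paper's approach exactly. Surjectivity is actually much easier than you make it: the image of $\Phi_0$ is by construction the closed unital algebra generated by $B$ and $zB$, which is the \emph{definition} of $\mathcal A_B^0$; likewise the image of $\Phi$ is the closed algebra generated by $B,zB,\ldots,z^{N-1}B$, which the paper has already observed equals $\mathcal A_B$. Your Vandermonde/Hermite interpolation scheme is not needed, and in the $\Phi_0$ case it would not work as stated anyway, since for $N\geq 3$ the $N$ branch directions $(1,\alpha_j)\in\mathbb C^2$ cannot be linearly independent and the order-by-order matching is over-determined.

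The real gap is in your treatment of the structural claims. You establish the inclusion $\mathcal A_B^0\subseteq\{f:c_{j,i}(f)=0\text{ for }i>j,\ j\leq N-2\}$ via the normalized expansion, but your argument for the reverse inclusion is circular. You write that $\mathbb C+B^{N-1}A(\mathbb D)\subseteq\mathcal A_B^0$ ``is immediate since such functions have normalized expansions avoiding the forbidden indices''---but that only places them in the coefficient-constraint space, not in $\mathcal A_B^0$, which is precisely the equality you have yet to prove. The appeal to a ``dimension count given by the surjectivity of $\Phi_0$'' does not supply the missing codimension bound: surjectivity of $\Phi_0$ onto $\mathcal A_B^0$ is tautological and carries no independent dimension information about $\mathcal A_B^0$ inside $A(\mathbb D)$.

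The paper closes this gap by two direct arguments. For the inclusion $B^{N-1}A(\mathbb D)\subseteq\mathcal A_B^0$, it exhibits $z^NB^{N-1}$ explicitly as a polynomial in $B$ and $zB$ using the identity $\prod_j(1-\overline{\alpha_j}z)B=\prod_j(z-\alpha_j)$, and then inducts on $k$ for $z^{N+k}B^{N-1}$. For the exclusion $z^iB^j\notin\mathcal A_B^0$ when $i>j$ and $j\leq N-2$, it argues geometrically at the singular point: if $z^2B$ (say) were $\Phi_0(f)$ for some $f\in A(\mathcal N_B)$, then $xf=y^2$ on $\mathcal N_B$, and evaluating along curves $\{x=ty^2\}$ through $(0,0)$ forces $f(0,0)=1/t$ for every $t$, contradicting analyticity. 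You would need to supply one of these two arguments (or an equivalent) to complete your proof.
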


\begin{proof}
  The case of $A(\mathcal N_B)$ is only treated, the other being
  handled identically.

  As noted earlier, the algebra $\mathcal Q_B$ of rational functions
  with poles off of $\mathcal N_B$ is dense in $A(\mathcal N_B)$.
  Define a map $\rho:\mathcal Q_B \to \mathcal A_B$ by
  \begin{equation*}
    \rho(p/q) = \frac{p(B,zB)}{q(B,zB)}, \qquad p,q \text{
      polynomials},
  \end{equation*}
  and extending linearly.  If it were the case that $q(B(\zeta),\zeta
  B(\zeta)) = 0$ for some $\zeta\in \overline{\mathbb D}$, then for
  $(x,y) = (B(\zeta),\zeta B(\zeta)) \in \mathcal N_B$, $q(x,y) = 0$,
  and so $p/q$ cannot be in $\mathcal A(\mathcal N_B)$.  Hence the
  image of $\rho$ is in $A(\mathbb D)$.  Since the image is generated
  by $B$ and $zB$, it equals $\mathcal A_B^0$.  For $f\in A(\mathcal
  N_B)$, the maximum modulus principle holds for $\rho(f) = f(B,zB)$.
  Since $(x,y) \in \mathcal N_B \cap \mathbb T^2$ if and only if the
  associated $z$ is in $\mathbb T$, $f$ achieves its maximum modulus
  on $(x,y)\in \mathbb T^2\cap \mathcal N_B$.  Hence the map is
  isometric.  The same reasoning shows that the map is a complete
  isometry, and so it extends to a completely isometric homomorphism
  from $A(\mathcal N_B)$ onto $\mathcal A_B^0$.

  Now turn to the description of $\mathcal A_B^0$ in $\mathcal A_B$.
  Suppose for the time being that $B$ has three or more zeros, and
  that there is some $f\in A(\mathcal N_B)$ such that $\rho(f) = z^2B
  \in \mathcal A_B^0$.  Then $\rho(xf) = z^2B^2 = \rho(y^2)$.  Since
  the map $\rho$ is isometric, this implies that $xf = y^2$ in an open
  neighborhood $U$ of $(0,0)$.  Fix any non-zero complex number $t$
  and let $C_t = \{(x,y)\in \mathbb C^2 : x = ty^2\}$.  For $y_0$
  small enough and non-zero, $(x_0,y_0) \neq (0,0)$ is in $C_t \cap
  U$.  Evaluating at $(x_0,y_0)$ gives $f(x_0,y_0) = 1/t$.  Hence $f$
  cannot be analytic, and so $z^2B$ is not in $\mathcal A_B^0$.

  The same argument shows that any term of the form $z^i B^j$, $j =
  1,\dots, N-2$ and $i = j+1, \dots N-1$, while in $\mathcal A_B$, is
  not in $\mathcal A_B^0$.  Obviously anything of this form where $j$
  is arbitrary and $i\leq j$ can be written as a product of powers of
  $B$ and $zB$.

  Now suppose $B$ has $N\geq 2$ zeros and let $j = N-1$.  Then
  \begin{equation*}
    z^N B^{N-1} = \left(\prod_{j=1}^N (1-\overline{\alpha_j}z)B -
      g\right) B^{N-1} 
    = \left(\sum_{j=0}^N S_j(\overline{\alpha})z^j B - g\right)
    B^{N-1},
  \end{equation*}
  where $\deg g \leq N-1$.  All terms have the form $c z^i B^j$, $c$ a
  constant and $i \leq j$, and hence are in $\mathcal A_B^0$.  Also,
  \begin{equation*}
    z^{N+k} B^{N-1} = z^k \left(\sum_0^N S_j(\overline{\alpha})z^j B -
      g\right) B^{N-1},
  \end{equation*}
  so by an induction argument, all of these are in $\mathcal A_B^0$ as
  well.  Hence, $\mathcal A_B^0 \supset B^{N-1} A(\mathbb D)$.  In
  particular, if $B$ has only two zeros, $B$ and $zB$ generate the
  algebra $\mathcal A_B$, and in this case $\rho$ is onto.
\end{proof}

\section{The rational dilation problem and constrained algebras}
\label{sec:rati-dilat-probl}

Our goal is to study the \emph{rational dilation problem} on $\mathcal
V = \mathcal V_B$ (respectively, $\mathcal N_B$).  Thus we consider
$n$ tuples of commuting operators $T = (T_1,\dots,T_n)$ ($n=N$ and
$n=2$, respectively) acting on a Hilbert space $\mathcal H$ having
$\mathcal V$ as a \emph{spectral set}.  Recall that this means that
the joint spectrum of $T$ lies in $\mathcal V$ and for each $f\in
\mathcal Q_B$, $\|f(T)\| \leq \|f\|$, where the left hand norm is the
usual operator norm, and the right hand norm is the supremum norm of
$f$ on $\mathcal V$.  This is a form of the von~Neumann inequality,
and as noted in the introduction, can be interpreted as stating that
$T$ induces a contractive unital representation of $\mathcal Q_B$, and
hence $A(\mathcal V)$.

The \emph{rational dilation problem} then asks whether such a $T$
dilates to a commuting tuple of normal operator $W = (W_1,\dots,W_n)$
acting on some Hilbert space $\mathcal K \supset \mathcal H$ with
spectrum contained in the distinguished boundary of $\mathcal V
\subset \mathbb T^n$.  By a \emph{dilation}, we mean that $f(T) =
P_\mathcal H f(W) |_\mathcal H$ for all $f\in \mathcal Q_B$.  The
tuple $W$ is referred to as a \emph{normal boundary dilation}.  If $W$
exists for all such $T$, rational dilation is said to hold, and
otherwise, it fails.

For the tuple of normal operators $W$, not only is it the case that
$\|f(W)\| \leq \|f\|$ for $f\in \mathcal Q_B$, but also for $f\in
\mathcal Q_B\otimes M_r(\mathbb C)$, $r\in \mathbb N$.  Therefore if
$T$ has a normal boundary dilation, it is also true that $\|f(T)\|
\leq \|f\|$ for $f\in \mathcal Q_B\otimes M_r(\mathbb C)$.  In other
words, when rational dilation holds, contractive representations of
$\mathcal Q_B$ (and hence $A(\mathcal V)$) are completely contractive,
and the converse also holds.  Thus a strategy for showing that
rational dilation fails on $A(\mathcal V_B)$ (respectively,
$A(\mathcal N_B)$) is to find a contractive representation of
$\mathcal A_B$ (respectively, $\mathcal A_B^0$) which is not
completely contractive.  This is the approach taken.

\section{Test functions}
\label{sec:test-functions}

Our method for solving the rational dilation problem requires finding
a family of so-called ``test functions'' for the algebras $\mathcal
A_B$and $\mathcal A_B^0$.  For other purposes (such as solving
interpolation problems), it is useful for this family to be in some
sense minimal.  We give a brief synopsis the notion of test functions
and their use in the solution of interpolation problems, and otherwise
refer to \cite{MR2389623} for further details.  See also
\cite{MR2003b:47001}.

Let $X$ be a set and $\Psi = \{\psi_\alpha\}$ a collection of complex
valued functions on $X$.  The elements of $\Psi$ are called \emph{test
  functions} if they satisfy two conditions:
\begin{itemize}
\item For any $x\in X$, $\sup_{\psi\in \Psi} |\psi(x)| < 1$, and
\item The elements of $\Psi$ separate the points of $X$.
\end{itemize}

Given a set of test functions $\Psi$, the set of \emph{admissible
  kernels} $\mathcal K_\Psi$ consists of positive kernels $k$ on
$X\times X$ to $\mathbb C$ with the property that for each
$\psi\in\Psi$, the kernel
\begin{equation*}
  \left((1 - \psi(x)\psi(y)^*)k(x,y)\right) \geq 0.
\end{equation*}

The admissible kernels allow us to define a function algebra
$H^\infty(\mathcal K_\Psi)$ of those functions $\varphi$ on $X$ for
which there is a $c\in \mathbb R^+$ such that for all $k\in \mathcal
K_\Psi$,
\begin{equation*}
  \left((c1 - \varphi(x)\varphi(y)^*)k(x,y)\right) \geq 0.
\end{equation*}
The infimum over all such $c$ defines a norm on $H^\infty(\mathcal
K_\Psi)$ making it a Banach algebra.  Obviously, the test functions
are in the unit ball of this algebra.  Because any positive kernel
which is zero when $y\neq x$ is admissible, the norm of
$H^\infty(\mathcal K_\Psi)$ will always be greater than or equal to
the supremum norm, and so $H^\infty(\mathcal K_\Psi)$ is weakly closed
(that is, closed under pointwise convergence).

A key result in the study of algebras generated through test functions
is the realization theorem~\cite{MR2389623}, which gives several
equivalent characterizations of membership of the closed unit ball of
the algebra $H^\infty(\mathcal K_\Psi)$.  The relevant portion is
stated here.  Some notation: $C(\Psi)$ is the algebra of bounded
continuous functions on $\Psi$, and $C(\Psi)^*$ is its continuous
dual.  Assume that $\Psi$ is endowed with a suitable topology so that
for all $x\in X$, the functions $E_x : \psi \in \Psi \mapsto \psi(x)$
are in $C(\Psi)$.  In this case $E_x^* : \psi \in \Psi \mapsto
\psi(x)^*$ is also in $C(\Psi)$.

\begin{theorem}[Realization theorem]
  \label{thm:realization-theorem}
  Let $\Psi$ be a collection of test functions on a set $X$, $\mathcal
  K_\Psi$ the admissible kernels, and $H^\infty(\mathcal K_\Psi)$ the
  associated function algebra.  For $\varphi: X\to \mathbb C$, the
  following are equivalent:
  \begin{enumerate}
  \item $\varphi \in H^\infty(\mathcal K_\Psi)$ with $\|\varphi\| \leq
    1$;
  \item There is a positive kernel $\Gamma : X\times X \to C(\Psi)^*$
    such that for all $x,y\in X$,
    \begin{equation*}
      1 - \varphi(x)\varphi(y)^* = \Gamma(x,y) (1 - E_x E_y^*); \quad
      and
    \end{equation*}
  \item If $\pi$ is any unital representation of $H^\infty(\mathcal
    K_\Psi)$ mapping the elements of $\Psi$ to strict contractions
    (ie, norm strictly less than $1$), then $\pi$ is contractive.
  \end{enumerate}
\end{theorem}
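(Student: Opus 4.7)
The plan is to prove the chain (1) $\Rightarrow$ (2) $\Rightarrow$ (3) $\Rightarrow$ (1), with a direct (2) $\Rightarrow$ (1) available as a sanity check. For the latter, fix $k \in \mathcal{K}_\Psi$ and a finite set $x_1,\dots,x_n \in X$; writing $k(x,y) = \sum_\alpha f_\alpha(x)\overline{f_\alpha(y)}$, a Schur-product argument shows that the $C(\Psi)^*$-valued kernel $(x,y) \mapsto k(x,y)\Gamma(x,y)$ remains positive, while admissibility of $k$ says that $(1 - \psi(x)\psi(y)^*)k(x,y)$ is a positive scalar kernel for each fixed $\psi \in \Psi$. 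Pairing these through the duality $C(\Psi) \times C(\Psi)^*$, after a standard approximation by simple functions, delivers positivity of $\bigl[(1-\varphi(x_i)\varphi(x_j)^*)k(x_i,x_j)\bigr]$, which is the content of (1).

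The main content is (1) $\Rightarrow$ (2), which I would approach by a Hahn-Banach / Agler-cone separation. In the space of Hermitian kernels on finite subsets of $X \times X$, form the convex cone $\mathcal{C}$ generated by $(1 - \psi(x)\psi(y)^*)K(x,y)$ with $\psi \in \Psi$ and $K \geq 0$, together with the positive kernels themselves. If $1 - \varphi(x)\varphi(y)^*$ failed to lie in the closure of $\mathcal{C}$, a separating positive functional would produce, via Riesz, an admissible $k_0 \in \mathcal{K}_\Psi$ violating the norm bound $\|\varphi\| \leq 1$; so membership is forced. The coefficients arising from approximating $1 - \varphi\varphi^*$ by elements of $\mathcal{C}$, reinterpreted via the Riesz representation theorem as elements of $C(\Psi)^*$, assemble into the required measure-valued kernel $\Gamma$. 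For (2) $\Rightarrow$ (3), I would run a lurking-isometry construction: GNS on $\Gamma$ produces a Hilbert space, the identity $1 - \varphi\varphi^* = \Gamma(1 - EE^*)$ rearranges into an isometric colligation, and $\varphi$ is recovered as a transfer function of the test functions. Given any unital representation $\pi$ sending each $\psi$ to a strict contraction, substitution of $\pi(\psi)$ into this transfer function gives a norm-convergent Neumann expansion (strict contractivity guarantees invertibility of the relevant resolvent) with contractive output $\pi(\varphi)$. Finally, for (3) $\Rightarrow$ (1), apply the hypothesis to the ``model'' representations: for each $k \in \mathcal{K}_\Psi$ and $r<1$, multiplication on the reproducing kernel Hilbert space $\mathcal{H}_k$ yields a unital representation in which each $r\psi$ is a strict contraction, whence $\pi(\varphi)$ is a contraction and $(1 - r^2\varphi(x)\varphi(y)^*)k(x,y) \geq 0$; letting $r \nearrow 1$ closes the loop.

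The principal obstacle will be (1) $\Rightarrow$ (2): one must choose a locally convex topology on Hermitian kernels so that the Agler cone $\mathcal{C}$ is closed in a useful sense and so that the separating functional assembles into a genuine positive $C(\Psi)^*$-valued kernel defined on \emph{all} of $X \times X$, rather than merely on finite subsets, and compatible with whatever topology on $\Psi$ ensures each $E_x \in C(\Psi)$. Managing this uniformity carefully is precisely what forces the output $\Gamma$ to take values in $C(\Psi)^*$ rather than in some less structured dual, and is where the bulk of the technical work (most of it already done in \cite{MR2389623}) resides.
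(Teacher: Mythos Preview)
The paper does not prove this theorem; it is quoted from \cite{MR2389623} as background, and no proof appears in the present paper. Your sketch is the standard route taken there (Agler-cone separation for (1)$\Rightarrow$(2), lurking-isometry/transfer-function realization for (2)$\Rightarrow$(3), and evaluation on multiplier representations for (3)$\Rightarrow$(1)), and you even flag the correct technical bottleneck, namely assembling the finite-set separation data into a globally defined positive $C(\Psi)^*$-valued kernel. So there is nothing to compare against here beyond noting that your outline matches the cited source.
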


The proof of the realization theorem is the basis for the following
interpolation theorem.

\begin{theorem}[Agler-Pick interpolation theorem]
  \label{thm:agler-pick-interpolation}
  Let $\Psi$ be a collection of test functions on a set $X$, $\mathcal
  K_\Psi$ the admissible kernels, and $H^\infty(\mathcal K_\Psi)$ the
  associated function algebra.  Fix a finite set $F \subset X$.  For
  $f: F\to \mathbb C$, the following are equivalent:
  \begin{enumerate}
  \item There is a function $\varphi \in H^\infty(\mathcal K_\Psi)$
    with $\|\varphi\| \leq 1$ such that $\varphi|_F = f$, and
  \item there is a positive kernel $\Gamma : F\times F \to C(\Psi)^*$
    such that for $x,y\in F$,
    \begin{equation*}
      1 - \varphi(x)\varphi(y)^* = \Gamma(x,y) (1 - E_x E_y^*).
    \end{equation*}
  \end{enumerate}
\end{theorem}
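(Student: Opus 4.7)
The implication $(1) \Rightarrow (2)$ is immediate from Theorem~\ref{thm:realization-theorem}: given $\varphi \in H^\infty(\mathcal{K}_\Psi)$ with $\|\varphi\| \leq 1$ and $\varphi|_F = f$, the realization theorem produces a positive kernel $\tilde\Gamma : X \times X \to C(\Psi)^*$ satisfying the realization identity on all of $X \times X$, and its restriction to $F \times F$ provides the required $\Gamma$. (Condition (2) should be read with $1 - f(x)f(y)^*$ on the left-hand side, since $\varphi$ is what one wishes to construct.)

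For the converse, my plan is to extend the finite data $(f,\Gamma)$ to global data $(\varphi,\tilde\Gamma)$ on $X$ satisfying the realization identity everywhere, and then apply $(2) \Rightarrow (1)$ of Theorem~\ref{thm:realization-theorem} to conclude $\varphi \in H^\infty(\mathcal{K}_\Psi)$ with $\|\varphi\| \leq 1$ and $\varphi|_F = f$. Using Zorn's lemma on the poset of triples $(G,g,\Gamma_G)$ with $F \subseteq G \subseteq X$, $g|_F = f$, $\Gamma_G|_{F \times F} = \Gamma$, $\Gamma_G$ a positive kernel on $G \times G$, and the realization identity satisfied on $G \times G$, I expect chains to admit upper bounds via pointwise extension of $g$ together with weak-$*$ compactness of $\Gamma_G$ on norm-bounded subsets of $C(\Psi)^*$. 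The entire argument then reduces to showing that a maximal element has $G = X$, i.e.\ to a single one-point extension: adjoining any $x_0 \in X \setminus G$, exhibit $g(x_0) \in \overline{\mathbb{D}}$ and $\Gamma_G(x_0,y) \in C(\Psi)^*$ for $y \in G \cup \{x_0\}$ so that positivity and the realization identity persist.

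This one-point extension will be the main obstacle. The approach I would take is a Hahn-Banach separation argument in the ordered vector space of Hermitian kernels on $(G \cup \{x_0\}) \times (G \cup \{x_0\})$ valued in the self-adjoint part of $C(\Psi)^*$. For each candidate $g(x_0) \in \overline{\mathbb{D}}$, the realization identities on the new row and column carve out a closed affine slice, and I would show that for at least one such $g(x_0)$ this slice meets the positivity cone. If the intersection were empty for every $g(x_0)$, a separating positive functional would exist; representing this functional as a positive combination of test-function evaluations (i.e.\ as an admissible kernel built from the extra degrees of freedom in $C(\Psi)^*$) and testing against the hypothesized positivity of the original $\Gamma$ on $F$ should yield the desired contradiction, in direct parallel to the separation step used in the proof of Theorem~\ref{thm:realization-theorem} itself. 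Once this lemma is in hand, the Zorn argument produces the global extension $(\varphi,\tilde\Gamma)$, and a second invocation of Theorem~\ref{thm:realization-theorem} closes the argument.
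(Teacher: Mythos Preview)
The paper does not give a proof of this theorem; it simply remarks that ``the proof of the realization theorem is the basis for the following interpolation theorem'' and imports the result from~\cite{MR2389623}. Your outline---one-point extension via a cone-separation argument, iterated to reach all of $X$---is precisely the scheme used there, so you are on the intended track.

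Two comments on the details. First, Zorn's lemma is heavier than needed and the chain step you flag is not entirely innocent: one typically iterates the one-point extension over finite supersets of $F$ and then passes to a limit by weak-$*$ compactness, the required norm bound on $\Gamma(x,y)$ coming from the diagonal identity $1-|g(x)|^{2}=\Gamma(x,x)(1-|E_{x}|^{2})$ together with $\sup_{\psi}|\psi(x)|<1$. Second, your separation sketch is the right shape but vague at the decisive point: the separating functional must be identified with an \emph{admissible} kernel $k$ on $G\cup\{x_{0}\}$; the inductive hypothesis then makes the Pick block $\bigl[(1-g(x)g(y)^{*})k(x,y)\bigr]_{x,y\in G}$ positive, and the test-function axiom $\sup_{\psi}|\psi(x_{0})|<1$ forces $k(x_{0},x_{0})>0$, so a Schur-complement choice of $g(x_{0})\in\overline{\mathbb D}$ renders the full Pick matrix positive---contradicting separation. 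That is the step that genuinely uses both defining properties of test functions, and it is worth making explicit.
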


In summary, given a collection of test functions $\Psi$, first
construct a set of admissible kernels $\mathcal K_\Psi$, and then from
this a function algebra $H^\infty(\mathcal K_\Psi)$.  In most
situations though, an algebra $\mathcal A$ on a domain $X$ is already
at hand, and so for example, if one wishes to solve interpolation
problems in $\mathcal A$, it is necessary to find a set of test
functions $\Psi$ generating $\mathcal A$.  A trivial choice
(disregarding possible degeneracies) is to let $\Psi$ be the unit ball
of $\mathcal A$.  The ideal though is to choose $\Psi$ to be as small
as possible.  Care is needed since it may be the case that removing
finitely, or even countably many test functions still leaves a
suitable set of test functions.  Insisting that the set of test
functions be (weakly) compact avoids this difficulty.  Even then, the
minimal set of test functions will only be defined up to automorphism.
In any case, a compact family of test functions $\Psi$ is said to be
\emph{minimal} for an algebra $\mathcal A(\mathcal K_\Psi)$ if there
is no proper closed subset of $\Psi$ such that the realization theorem
holds for all functions in the unit ball of $\mathcal A(\mathcal
K_\Psi)$.

Let us return our attention to the constrained algebras $\mathcal A_B$
and $\mathcal A_B^0$, and the construction of minimal sets of test
functions $\Psi_B$ and $\Psi_B^0$ for these, or rather, for the weak
closure of these algebras, $H^\infty_B$ and $H^{\infty,0}_B$.  To
simplify the work, it may be assumed that one of the zeros of $B$,
written as $\alpha_0$, equals $0$.  It turns out that this assumption
in fact imposes no real restriction.

For suppose that $B$ is a Blaschke product with zeros $\mathcal Z(B) =
\{\alpha_0, \dots, \alpha_n\}$ such that no $\alpha_j = 0$.  Composing
$B$ with the M\"obius map $m_{-\alpha_0} =
(z+\alpha_0)/(1+\overline{\alpha_0}z)$, to get a Blaschke product $B'$
with zeros ${\{{\alpha'}_j = m_{\alpha_0}(\alpha_j)\}}_{j=0}^n$.
Hence ${\alpha'}_0 = 0$.  Obviously composing with $m_{\alpha_0}$ maps
$B'$ back to $B$.  Since composition with $m_{\pm\alpha_0}$ leaves
$H^\infty(\mathbb D)$ invariant, $f\in H^\infty_{B}$ if and only if
$f' = f\circ m_{-\alpha_0} \in H^\infty_{B'}$, and furthermore, $\|f\|
= \|f'\|$.

Let $\Psi_{B'}$ be a family of test functions for $H^\infty_{B'}$, and
define $\Psi = \{\psi'\circ m_{\alpha_0} : \psi' \in \Psi_{B'}\}$.
Since $m_{\alpha_0}$ is an automorphism of the disk, $\Psi_{B'}$ maps
injectively onto $\Psi$, and so it is possible to identify
$C(\Psi_{B'})$ and $C(\Psi)$.  For $x\in \mathbb D$, set $x' =
m_{\alpha_0}(x)$.  Then
\begin{equation*}
  E_x(\psi) = \psi(x) = \psi'(m_{\alpha_0}(x)) = \psi'(x') =
  E_{x'}(\psi').
\end{equation*}

Let $\varphi \in H^\infty_{B}$ and set $\varphi' = \varphi \circ
m_{-\alpha_0}$.  Assume $\|\varphi'\|\,(= \|\varphi\|) = 1$.  By the
realization theorem and the assumption that $\Psi_{B'}$ is a family of
test functions for $H^\infty_{B'}$, there is a positive kernel
$\Gamma':\mathbb D \times \mathbb D \to C(\Psi_{B'})^*$ such that for
$x,y\in\mathbb D$, and $x' = m_{\alpha_0}(x)$, $y' = m_{\alpha_0}(y)$,
\begin{equation*}
  1 - \varphi(x) \varphi(y)^* = 1 - \varphi'(x') \varphi'(y')^*
  = \Gamma'(x',y') (1 - E_{x'} E_{y'}^*)
  = \Gamma(x,y)(1 - E_x E_y^*),
\end{equation*}
where $\Gamma(x,y) = \Gamma'(m_{\alpha_0}(x),m_{\alpha_0}(y))$ is a
positive kernel from $\mathbb D \times \mathbb D$ to $C(\Psi)^*$.
Conclude that $H^\infty_B$ is in the algebra $\mathcal A$ induced by
the test functions $\Psi$ and $\varphi$ is in the unit ball of
$\mathcal A$.  Since the norm of $\varphi$ in $\mathcal A$ is greater
than or equal to the supremum norm (the norm in $H^\infty_B$), the two
norms must be equal.  On the other hand, if $\varphi$ is in the unit
ball of $\mathcal A$, then for $\varphi' = \varphi \circ
m_{-\alpha_0}$, the realization theorem implies that $\varphi' \in
H^\infty_{B'}$, and so $\varphi \in H^\infty_B$.  Thus $\mathcal A =
H^\infty_B$ with the same norm, and the conclusion is that $\Psi$ is a
family of test functions for $H^\infty_B$.  By similar arguments,
$\Psi$ is minimal if and only if $\Psi_{B'}$ is minimal.

The same argument works when dealing with $\mathcal A_B^0$, so as
needed, $B$ will be replaced by $B'$, where $0 \in \mathcal Z(B')$.

\section{Herglotz representations and extreme rays}
\label{sec:extreme-rays}

In this section, sets of test functions for the algebras $\mathcal
A_B$ and $\mathcal A_B^0$ are determined.  The strategy employed is as
follows.  Suppose that $\varphi$ is in the unit ball of one of these
algebras and that $\varphi(0) = 0$.  A Cayley transform uniquely
associates to this a function $f: \mathbb D \to \mathbb H$ with $f(0)
= 1$, where $\mathbb H$ is the right half plane in $\mathbb C$.  The
function $f$ has a Herglotz representation with respect to a unique
probability measure $\mu$ on $\mathbb T$.  The constraints of the
algebra are encoded in the measure.  The probability measures form a
compact convex set, and $\mu$ can be represented as the integral with
respect to a measure supported on the extreme points of this set.  The
set of inverse Cayley transforms of the functions which (modulo a
unimodular constant) have Herglotz representations with respect to the
extremal measures is then the candidate for the set of test functions.

If $\varphi\in H^\infty$ with $\varphi(0) = 0$ and norm at most $1$,
and $f = M\circ \varphi$ where $M(z) = \frac{1+z}{1-z}$ maps $\mathbb
D$ to $\mathbb H$, then $\mathrm{Re}\, f \geq 0$ and $f(0) = 1$.  The
map $M$ has inverse $M^{-1}(z) = \frac{1-z}{1+z}$, and hence there is
a one to one correspondence between the set of functions in the unit
ball of $H^\infty$ which are zero at $0$ and the set of holomorphic
functions mapping $\mathbb D$ to $\mathbb H$ and the value $1$ at $0$.

By the Herglotz representation theorem, for any holomorphic $f:
\mathbb D \to \mathbb H$ with $f(0) = 1$, there is a unique
probability measure $\mu$ on $\mathbb T$ (usually referred to as the
\emph{Clark} or \emph{Alexandrov-Clark measure}) such that
\begin{equation*}
  f(z) = \int_\mathbb T \frac{w+z}{w-z}\, d\mu(w),
\end{equation*}
and conversely, if $\mu$ is a probability measure on $\mathbb T$, then
\begin{equation*}
  f(z) := \int_{\mathbb T} \frac{w + z}{w - z}\,d\mu(w)
\end{equation*}
defines a holomorphic function on $\mathbb D$ to $\mathbb H$ with
$f(0) = 1$.

The following can be cobbled together from other sources (see, for
example, \citep[Chapter~9]{MR2215991}).  We give a direct and
elementary proof.

\begin{lemma}
  \label{lem:finite-meas-gives-B-prod}
  Let $\mu$ be a positive finite atomic measure on $\mathbb T$, $\mu =
  \{(\lambda_j, m_j)\}_{j=1}^n \subset \mathbb T \times \mathbb
  R_{>0}$, with $f$ the function having Herglotz representation with
  this measure.  Then $\varphi = M^{-1}\circ f$ is a unimodular
  constant multiple of a Blaschke product with $n$ zeros, counting
  multiplicities, and $\varphi(0)\in \mathbb R$.

  Conversely, given a Blaschke product $\varphi$ with $n$ zeros
  $\{\alpha_j\}$ counting multiplicities such that $\varphi(0)\in
  \mathbb R$, there is a positive finite atomic measure $\mu$ on
  $\mathbb T$ such that $f = M\circ \varphi(z)$ has a Herglotz
  representation with this measure.  Furthermore, $\mu$ is a
  probability measure if and only if $\varphi(0) = 0$.
\end{lemma}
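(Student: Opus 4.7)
The plan is to work directly with the explicit bijection $\varphi = M^{-1}\circ f$, together with the purely-imaginary boundary behaviour of the Herglotz kernel.

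\emph{Forward direction.} With $\mu = \sum_{j=1}^n m_j\delta_{\lambda_j}$, the Herglotz integral $f$ is the rational function $\sum_j m_j(\lambda_j+z)/(\lambda_j-z)$. I first verify that $\varphi = (1-f)/(1+f)$ is rational, analytic on $\overline{\mathbb D}$, and unimodular on $\mathbb T$: the denominator $1+f$ does not vanish on $\mathbb D$ because $\mathrm{Re}\,f>0$ there, and the elementary identity $\mathrm{Re}((\lambda+z)/(\lambda-z)) = 0$ for $|\lambda|=|z|=1$, $\lambda\ne z$, forces $f$ to be purely imaginary on $\mathbb T\setminus\{\lambda_j\}$; this simultaneously gives $1+f\ne 0$ there and $|\varphi|=1$. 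At each atom $\lambda_j$, a local expansion yields $\varphi(\lambda_j)=-1$. Hence $\varphi$ is a unimodular scalar times a finite Blaschke product. For the degree, clearing denominators in $f(z)=1$ produces the polynomial $\prod_k(\lambda_k-z) - \sum_j m_j(\lambda_j+z)\prod_{k\ne j}(\lambda_k-z)$, whose leading coefficient works out to $(-1)^n(1+\sum_j m_j)\ne 0$, so it has exactly $n$ roots; all lie in $\mathbb D$ because $|\varphi|=1$ on $\mathbb T$ and $\varphi(\lambda_j)=-1$. Finally $\varphi(0)=(1-\mu(\mathbb T))/(1+\mu(\mathbb T))$ is real and vanishes precisely when $\mu(\mathbb T)=1$.

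\emph{Converse.} Given a Blaschke product $\varphi$ of degree $n$ with $\varphi(0)\in\mathbb R$, set $f=M\circ\varphi$. Then $f\colon\mathbb D\to\mathbb H$ is holomorphic and $f(0) = M(\varphi(0)) \in \mathbb R_{>0}$ (because $\varphi(0)\in(-1,1)$), so applying the Herglotz theorem to $f/f(0)$ produces a representation $f(z)=\int(w+z)/(w-z)\,d\mu(w)$ for a unique positive measure $\mu$ of total mass $f(0)$. The task is to show $\mu$ is a sum of $n$ point masses. The poles of $f$ coincide with the solutions of $\varphi=1$, and these lie on $\mathbb T$ since $|\varphi|<1$ on $\mathbb D$. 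The standard identity $\lambda\varphi'(\lambda)/\varphi(\lambda) = \sum_k (1-|\alpha_k|^2)/|\lambda-\alpha_k|^2$ on $\mathbb T$, strictly positive, shows $\varphi'$ is nonvanishing on $\mathbb T$, so every pole of $f$ is simple; combined with the fact that $\varphi|_\mathbb T\colon\mathbb T\to\mathbb T$ is a smooth cover of degree $n$, this gives exactly $n$ distinct solutions $\lambda_1,\ldots,\lambda_n$. At each $\lambda_j$ the function $f$ has residue $-2/\varphi'(\lambda_j)$ while $(w+z)/(w-z)$ has residue $-2\lambda_j$ at $w=\lambda_j$, so matching residues forces $\mu=\sum_j m_j\delta_{\lambda_j}$ with $m_j=1/(\lambda_j\varphi'(\lambda_j))>0$ (positive by the same identity). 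The probability-measure claim then follows because $\sum_j m_j=\mu(\mathbb T)=f(0)=M(\varphi(0))$, which equals $1$ iff $\varphi(0)=0$.

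The principal obstacle is the multiplicity bookkeeping in the converse: one must know both that $\varphi'$ does not vanish on $\mathbb T$ (so each pole of $f$ is simple) and that $\varphi=1$ has exactly $n$ solutions on $\mathbb T$. Both facts reduce cleanly to the positive identity for $z\varphi'/\varphi$ above, after which the rest is a residue-matching calculation.
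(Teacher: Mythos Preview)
Your forward direction is correct and cleaner than the paper's: where the paper carries out an explicit symmetric–polynomial computation to exhibit $\varphi$ as $\overline{S_n(\lambda)}\prod_j(z-\alpha_j)/(1-\overline{\alpha_j}z)$, you instead use the boundary behaviour of the Herglotz kernel to see directly that $\varphi$ is analytic on $\overline{\mathbb D}$ and unimodular on $\mathbb T$, hence a finite Blaschke product up to a unimodular constant. One small point worth making explicit is that the numerator and denominator of $\varphi$ (after clearing $\prod_k(\lambda_k-z)$) share no root, so the degree-$n$ polynomial you produce really gives all $n$ zeros of $\varphi$ without cancellation; this is immediate since at $z_0\notin\{\lambda_j\}$ one cannot have $f(z_0)=1$ and $f(z_0)=-1$ simultaneously, and at $z_0=\lambda_j$ the numerator evaluates to $-2m_j\lambda_j\prod_{k\ne j}(\lambda_k-\lambda_j)\ne 0$.

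The converse has a genuine, though fixable, gap. You invoke the abstract Herglotz theorem to obtain a positive measure $\mu$ with $\mu(\mathbb T)=f(0)$, then write ``matching residues forces $\mu=\sum_j m_j\delta_{\lambda_j}$''. Residue matching only identifies the atomic part of $\mu$; it does not by itself rule out a continuous component. You need one more sentence: since $|\varphi|=1$ on $\mathbb T$, the function $f=M\circ\varphi$ has $\mathrm{Re}\,f=0$ on $\mathbb T\setminus\{\lambda_j\}$, and the Herglotz measure is the weak-$*$ boundary limit of $\mathrm{Re}\,f(re^{i\theta})\,d\theta/2\pi$, so $\mu$ is supported on $\{\lambda_j\}$. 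Alternatively (and this is what the paper does), bypass the abstract Herglotz theorem entirely: write the partial-fraction decomposition $f(z)=-m-\sum_k 2m_k\lambda_k/(z-\lambda_k)$ directly, compute $m_k=1/(\lambda_k\varphi'(\lambda_k))>0$ from the residue, and then use $\lim_{z\to\infty}f(z)=-m$ together with the hypothesis $\varphi(0)\in\mathbb R$ to verify $m=\sum_k m_k$. The paper's route makes visible exactly where the reality assumption on $\varphi(0)$ is used (it forces the constant term to close up correctly), whereas in your argument that hypothesis is used only to get $f(0)\in\mathbb R_{>0}$ so that Herglotz applies.
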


\begin{proof}
  Let
  \begin{equation*}
    f(z) = \int_{\mathbb T} \frac{w + z}{w - z}\,d\mu(w) =
    -\sum_{i=1}^n m_i \frac{z+\lambda_i}{z-\lambda_i},
  \end{equation*}
  a holomorphic function from $\mathbb D$ to $\mathbb H$.  Set $m =
  \sum m_i$.  Then
  \begin{equation*}
    \begin{split}
      1 \pm f(z) &= \frac{\tfrac{1}{m} \sum_i m_i \prod_j(z-\lambda_j)
        \mp \sum_i m_i (z+\lambda_i)\prod_{j\neq
          i}(z-\lambda_j)}{\prod_j(z-\lambda_j)} \\
      &=\frac{\sum_{k=0}^n \left[\sum_{i=1}^n (\tfrac{m_i}{m}
          S_k(\lambda) \mp m_i S_k^{-i}(\lambda))\right]
        z^{n-k}}{\prod_j(z-\lambda_j)},
    \end{split}
  \end{equation*}
  and
  \begin{equation*}
    \varphi(z) := (M^{-1}\circ f)(z) = \frac{1-f(z)}{1+f(z)}
    = \frac{\sum_{k=0}^n \left[\sum_{i=1}^n (\tfrac{m_i}{m} S_k(\lambda)
        + m_i S_k^{-i}(\lambda))\right]z^{n-k}}{\sum_{k=0}^n \left[\sum_{i=1}^n
        (\tfrac{m_i}{m} S_k(\lambda) - m_i S_k^{-i}(\lambda))\right]z^{n-k}}
  \end{equation*}
  is a holomorphic map of the disk to itself.

  Since the coefficient of $z^n$ in the numerator of $\varphi$ is $1+m
  >0$, the numerator is a polynomial of degree $n$ with complex roots
  $\alpha_1,\dots ,\alpha_n$.  Express the numerator as
  $(1+m)\prod_j(z-\alpha_j)$.  Then
  \begin{equation*}
    (1+m)S_k(\alpha) = \sum_i (m_i S_k(\lambda) + m_i
    S_k^{-i}(\lambda)),
  \end{equation*}
  and so the denominator can be expressed as
  \begin{equation*}
    \begin{split}
      \sum_{k=0}^n \left[\sum_{i=1}^n (\tfrac{m_i}{m} S_k(\lambda) -
        m_i S_k^{-i}(\lambda))\right] z^{n-k} =\,& S_n(\lambda)
      \sum_{k=0}^n \left[\sum_{i=1}^n (\tfrac{m_i}{m}
        \overline{S_{n-k}(\lambda)} + m_i
        \overline{S_{n-k}^{-i}(\lambda)})\right] z^{n-k} \\
      = S_n(\lambda) (1+m) \sum_{k=0}^n \overline{S_k(\alpha)}z^k =\,&
      S_n(\lambda) (1+m) \prod_{j=1}^n (1-\overline{\alpha_j}z).
    \end{split}
  \end{equation*}
  Hence
  \begin{equation*}
    \varphi(z) = \overline{S_n(\lambda)} \prod_{j=1}^n
    \frac{z-\alpha_j}{1-\overline{\alpha_j}z}.
  \end{equation*}
  Since $f(0) = \sum_i m_i \in \mathbb R$, the same is then true for
  $\varphi(0)$.

  Conversely, assume that $\varphi = c B$, where $c$ is a unimodular
  constant, $B$ is a Blaschke product with $n$ zeros $\alpha_1,\dots ,
  \alpha_n$, counting multiplicities and $\varphi(0) \in \mathbb R$.
  Then
  \begin{equation}
    \label{eq:3}
    f(z) = \frac{1+\varphi(z)}{1-\varphi(z)} = \frac{\prod_j
      (1-\overline{\alpha_j}z) + c\prod_j(z-\alpha_j)}{\prod_j
      (1-\overline{\alpha_j}z) - c\prod_j(z-\alpha_j)}
  \end{equation}
  is a holomorphic map from $\mathbb D$ to $\mathbb H$.  Since
  $|S_n(\alpha)| < |c| = 1$, the leading coefficient in the
  denominator $C = \overline{S_n(\alpha)} - c$ is non-zero.  Thus the
  denominator of $f$ has $n$ zeros in $\mathbb C \backslash \mathbb
  D$, $\lambda_1,\dots, \lambda_n$.  Write the denominator as
  $C\prod_j (z-\lambda_j)$.

  If the numerator and denominator of $f$ have a common root $w$, then
  $\prod_j(w-\alpha_j) = 0$, implying $\lambda_k = \alpha_j\in\mathbb
  D$ for some $k$ and $j$, which is a contradiction.  The constant
  coefficient of the denominator equals $(1 - c S_n(\alpha))/C =
  c\overline{C}/C$, which has absolute value $1$.  Hence each
  $\lambda_j \in \mathbb T$.

  Suppose that the denominator of $f$ has a repeated root at some
  $\lambda\in \mathbb T$.  Then the logarithmic derivative of
  $\varphi$,
  \begin{equation*}
    \frac{\varphi'(z)}{\varphi(z)} = \sum_{k=1}^n
    \frac{1-|\alpha_k|^2}{(1-\overline{\alpha_k}z)(z-\alpha_k)} =
    \frac{-2f'(z)}{1-f(z)^2},
  \end{equation*}
  is zero at $\lambda$.  On the other hand, $\lambda\in \mathbb T$
  implies
  \begin{equation}
    \label{eq:4}
    \frac{\varphi'(\lambda)}{\overline{\lambda}\varphi(\lambda)} =
    \sum_k\frac{1-|\alpha_k|^2}{|\lambda - \alpha_k|^2} >0,
  \end{equation}
  giving a contradiction.

  Consequently, since the denominator of $f$ has $n$ simple roots, $f$
  has a partial fraction decomposition
  \begin{equation}
    \label{eq:5}
    f(z) = -m - \sum_{k=1}^n m_k \frac{2\lambda_k}{z-\lambda_k} .
  \end{equation}
  It remains to verify that each $m_k >0$ and $m = \sum_k m_k$.  This
  will then imply
  \begin{equation*}
    f(z) = -\sum_{i=1}^n m_i \frac{z+\lambda_i}{z-\lambda_i},
  \end{equation*}
  meaning that $f$ has a Herglotz representation with positive finite
  atomic measure $\mu = \{(\lambda_j, m_j)\}_{j=1}^n$ on $\mathbb T
  \times \mathbb R_{>0}$.

  By \eqref{eq:5}, $\lim_{z\to \lambda_k} (z-\lambda_k)f(z) =
  -2\lambda_k m_k$.  Also, since $\varphi(\lambda_k) = 1$,
  \begin{equation*}
    \lim_{z\to \lambda_k} (z-\lambda_k)f(z) = \lim_{z\to \lambda_k}
    \frac{1 + \varphi(z)}{ \frac{1 - \varphi(z)}{z-\lambda_k}} =
    \frac{-2}{\varphi'(\lambda_k)},
  \end{equation*}
  and so by \eqref{eq:4}
  \begin{equation*}
    m_k = \frac{1}{\lambda_k\varphi'(\lambda_k)} > 0.
  \end{equation*}
  
  The assumptions that $c\in \mathbb T$ and $\varphi(0) = c
  S_n(\alpha) \in \mathbb R$, along with \eqref{eq:5} and
  \eqref{eq:3}, imply that
  \begin{equation*}
    -m = \lim_{z\to\infty} f(z) = \frac{\overline{S_n(\alpha)} +
      c}{\overline{S_n(\alpha)} - c} = -\frac{1 +
      c\overline{S_n(\alpha)}}{1 - c\overline{S_n(\alpha)}} = 
    -f(0) = m - 2\sum_k m_k.
  \end{equation*}
  Hence $m = \sum_k m_k$.  Also, if $\alpha_j = 0$ for some $j$, then
  $m = 1$, and so $\mu$ is a probability measure.  Conversely, if
  $\mu$ is a probability measure, then $c\prod_j \alpha_j = 0$, and so
  $\alpha_j = 0$ for some $j$.
\end{proof}

Recall the assumption that $B'$ is a Blaschke product of degree bigger
than $1$ with a zero at $\alpha_0 = 0$ of multiplicity at least $1$.
Write $t_j$ for the multiplicity of the zero $\alpha_j$ of $B'$.

If $f = M\circ \varphi$ where $\varphi\in H^\infty_{B'}$ with
$\varphi(0) = 0$, then there are constraints imposed on the
corresponding probability measure $\mu$.  For $j>0$,
\begin{equation*}
  1 = f(\alpha_j) = \int_\mathbb T \frac{w+\alpha_j}{w-\alpha_j}\,
  d\mu(w) = \int_\mathbb T \left[ 1 + \frac{2\alpha_j}{w-\alpha_j}
  \right]\, d\mu(w) = 1+ 2\alpha_j\int_\mathbb T
  \frac{1}{w-\alpha_j}\, d\mu(w),
\end{equation*}
and thus
\begin{equation*}
  \int_\mathbb T \frac{1}{w-\alpha_j}\, d\mu(w) = 0,\qquad j > 0.
\end{equation*}

By an induction argument,
\begin{equation*}
  f^{(k)}(z) = 2k!\int_\mathbb T \frac{w}{(w-z)^{k+1}}\, d\mu(w) = 2k!
  \left[ \int_\mathbb T \frac{1}{(w-z)^{k}}\, d\mu(w) + \int_\mathbb T
    \frac{z}{(w-z)^{k+1}}\, d\mu(w)\right].
\end{equation*}
If the multiplicity $t_j$ of the root $\alpha_j$, is bigger than $1$,
then as neither $M$ nor its derivatives have any zeros in $\mathbb D$,
the Fa\`a di Bruno formula implies that
\begin{equation*}
  f^{(k)}(\alpha_j) = 0, \qquad j>0 \text{ and } k=1,\dots t_j-1.
\end{equation*}
Thus
\begin{equation}
  \label{eq:6}
  0 =  \int_\mathbb T \frac{1}{(w-\alpha_j)^{k}}\, d\mu(w), \qquad j>0
  \text{ and } k=1,\dots t_j.
\end{equation}
For $z = \alpha_0 = 0$, if $t_0 > 1$, then
\begin{equation}
  \label{eq:7}
  0 =  \int_\mathbb T \frac{1}{w^{k}}\, d\mu(w), \qquad
  k=1,\dots t_0-1.
\end{equation}
Consequently, the first $t_0-1$ moments of $\mu$ are zero, and other,
more complex constraints are implied by the formulas involving the
other roots.

Conversely, suppose that $\mu$ is a probability measure for which
\eqref{eq:6} and \eqref{eq:7} hold.  If, for example,
\begin{equation*}
  0 = \int_{\mathbb T} \frac{1}{w - \alpha_1}\,d\mu(w),
\end{equation*}
then
\begin{equation*}
  f(\alpha_1) = \int_{\mathbb T} \frac{w + \alpha_1}{w -
    \alpha_1}\,d\mu(w)
  = \int_{\mathbb T} \frac{w - \alpha_1}{w - \alpha_1} \,d\mu(w)
  +2\alpha_1 \int_{\mathbb T} \frac{1}{w - \alpha_1}\,d\mu(w)
  =1.
\end{equation*}
By the same reasoning, $f(\alpha_j) = 1$ for all $j$.  Similar
calculations show that $f^{(k)}(\alpha_j) = 0$ for $1\leq k \leq t_j -
1$.

Denote the set of positive measures satisfying the constraints
in~\eqref{eq:6} and~\eqref{eq:7} by $M^+_{B',\mathbb R}(\mathbb T)$.
This is a weak-$*$ closed, convex, locally compact set in the Banach
space of finite Borel measures $M_{B',\mathbb R}(\mathbb T) =
\overline{\bigvee M^+_{B',\mathbb R}(\mathbb T)}$, and is additionally
a cone since it is closed under sums, positive scalar multiples, and
$M^+_{B',\mathbb R}(\mathbb T) \cap -M^+_{B',\mathbb R}(\mathbb T) =
\{0\}$.  Recall that in a convex set $A$ in a vector space $X$, $E
\subset A$ is an \emph{extremal set} if whenever $a\in E$ and $a = tx
+ (1-t)y$ for $x,y \in A$ and $t\in (0,1)$, it follows that $x,y \in
E$.  One point extremal sets are \emph{extreme points}, while extremal
sets which are half lines are termed \emph{extreme rays}
(\emph{extreme directions} in \cite{MR2375060}).  Here we follow the
conventions laid out in Holmes~\cite{MR0410335}.

Write $M^{+,1}_{B',\mathbb R}(\mathbb T)$ for the probability measures
in $M^+_{B',\mathbb R}(\mathbb T)$.  This set is weak-$*$ closed,
convex, and compact, and forms a \emph{base} for $M^+_{B',\mathbb
  R}(\mathbb T)$, in that any $\tilde\mu \in M^+_{B',\mathbb
  R}(\mathbb T)$ is of the form $t\mu$ for some $\mu\in
M^{+,1}_{B',\mathbb R}(\mathbb T)$ and $t \geq 0$.  By the
Kre{\u\i}n-Mil$'$man theorem, $M^{+,1}_{B',\mathbb R}(\mathbb T)$ is
the closed convex hull of $\hat\Theta$, the set of its extreme points,
and it is an elementary observation that $\mu \in M^{+,1}_{B',\mathbb
  R}(\mathbb T)$ is an extreme point if and only if $\{t\mu:
t\in\mathbb R^+\}$ is an extreme ray in $M^{+}_{B',\mathbb R}(\mathbb
T)$.

By the Choquet-Bishop-de~Leeuw theorem~\cite{MR1835574}, to any
$\mu\in M^{+,1}_{B',\mathbb R}(\mathbb T)$ there corresponds a $\nu$
on $\hat\Theta$ such that
\begin{equation*}
  \mu = \int_{\hat\Theta} \theta \,d\nu_\mu(\theta).
\end{equation*}
This is reminiscent of the Alexandrov disintegration
theorem~\cite{MR2215991}.

For $\mu\in \hat\Theta$, define
\begin{equation*}
  f_\mu(z) := \int_\mathbb T \frac{w+z}{w-z}\, d\mu(w),
\end{equation*}
an analytic function on $\mathbb D$ with positive real part and value
$1$ when $z=0$.  As in \cite{MR2946923}, this yields the so-called
\emph{Agler-Herglotz representation}.

\begin{theorem}[Agler-Herglotz representation associated to $\mathcal
  A_B$]
  \label{thm:Agler-Herglotz-repn-AB}
  Let $f$ be analytic on $\mathbb D$ with positive real part, and
  suppose further that
  \begin{equation}
    \label{eq:8}
    f(\alpha_0) = f(\alpha_j) = 1, \quad \text{ and }\quad
    f^{(k)}(\alpha_j) = 0, \quad j=1,\dots,n, \ 1\leq k \leq t_j - 1.
  \end{equation}
  Then there is a probability measure $\nu$ on the set of extreme
  points $\hat\Theta$ of $M^{+,1}_{B',\mathbb R}(\mathbb T)$ such that
  \begin{equation}
    \label{eq:9}
    f(z) = \int_{\hat\Theta} f_\mu(z) \, d\nu(\mu).
  \end{equation}
\end{theorem}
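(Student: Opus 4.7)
The plan is to reduce the statement to a direct application of the Choquet--Bishop--de~Leeuw theorem, using the Herglotz correspondence between analytic functions with positive real part on $\mathbb D$ and positive measures on $\mathbb T$.

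First I would invoke the Herglotz representation theorem: since $f$ is analytic on $\mathbb D$ with positive real part and $f(0) = f(\alpha_0) = 1$ (recall $\alpha_0 = 0$), there is a unique probability measure $\mu$ on $\mathbb T$ with $f(z) = \int_\mathbb T \frac{w+z}{w-z}\, d\mu(w)$. The next step is to verify that the interpolation/multiplicity conditions~\eqref{eq:8} are precisely the conditions placing $\mu$ in $M^{+,1}_{B',\mathbb R}(\mathbb T)$. This is exactly the content of the calculations carried out just before~\eqref{eq:6} and~\eqref{eq:7}, together with their partial converses: the derivatives of $f$ expressed as integrals against $\mu$ vanish at $\alpha_j$ to the required orders if and only if the moment-type conditions~\eqref{eq:6} and~\eqref{eq:7} hold. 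Hence $\mu \in M^{+,1}_{B',\mathbb R}(\mathbb T)$.

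Now I would apply Choquet--Bishop--de~Leeuw, as was already set up in the discussion preceding the theorem: since $M^{+,1}_{B',\mathbb R}(\mathbb T)$ is a weak-$*$ compact, convex, metrizable subset of the Banach space $M_{B',\mathbb R}(\mathbb T)$, there is a probability measure $\nu$ supported on the set $\hat\Theta$ of its extreme points such that
\begin{equation*}
  \mu = \int_{\hat\Theta} \theta \, d\nu(\theta),
\end{equation*}
interpreted in the weak-$*$ sense. Substituting this into the Herglotz integral and interchanging the order of integration gives
\begin{equation*}
  f(z) = \int_{\mathbb T} \frac{w+z}{w-z}\, d\mu(w)
  = \int_{\hat\Theta} \left(\int_\mathbb T \frac{w+z}{w-z}\, d\theta(w) \right) d\nu(\theta)
  = \int_{\hat\Theta} f_\theta(z)\, d\nu(\theta),
\end{equation*}
which is the desired representation~\eqref{eq:9}.

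The main obstacle is justifying the interchange of the two integrals. For fixed $z \in \mathbb D$, the function $w \mapsto \frac{w+z}{w-z}$ is continuous on $\mathbb T$, so by definition of weak-$*$ convergence (and the barycentric interpretation of the Choquet integral) the map $\theta \mapsto f_\theta(z) = \int_\mathbb T \frac{w+z}{w-z}\, d\theta(w)$ is continuous on $\hat\Theta$ and bounded uniformly on compact subsets of $z \in \mathbb D$. Coupled with the fact that $\nu$ is a probability measure and $\theta$ ranges over probability measures, this supplies the integrability needed to apply Fubini and conclude. A small technical point worth recording is that $\hat\Theta$ is a Borel subset of $M^{+,1}_{B',\mathbb R}(\mathbb T)$ (it is a $G_\delta$ in the metrizable case), so integration over it is unambiguous.
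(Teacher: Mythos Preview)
Your proposal is correct and follows precisely the approach the paper takes: the paper assembles the Herglotz representation, the equivalence between the constraints~\eqref{eq:8} and membership in $M^{+,1}_{B',\mathbb R}(\mathbb T)$, and the Choquet--Bishop--de~Leeuw theorem in the discussion immediately preceding the theorem, and then simply states the result as a consequence. Your added justification of the Fubini step and the Borel measurability of $\hat\Theta$ are details the paper leaves implicit.
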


Next turn to concretely characterizing the elements of $\hat\Theta$.
This is done by first finding a dual characterization of the
constraints in terms of the annihilator of $H^\infty_{B'}$.  The
following is in fact a special case of what is considered by Ball and
Guerra-Huam\'an in~\cite{MR3138369}.  Nevertheless, the special nature
of the algebras considered here allow us to give much more specific
information.

As usual, $L^2_{\mathbb R}(\mathbb T)$ will stand for the Hilbert
space of real valued square integrable functions on the unit circle.
Also, $M_{\mathbb R}(\mathbb T)$ stands for the space of finite
regular real Borel measures on $\mathbb T$, which is the dual of
$C_{\mathbb R}(\mathbb T)$ with the norm topology, as well as being
the weak-$*$ predual of this space.  Every $\mu\in M_{\mathbb
  R}(\mathbb T)$ is associated by means of a Poisson kernel to a
harmonic function $\hat\mu$, which in this setting is the real part of
a holomorphic function on $\mathbb D$.  The space $L^2_{\mathbb
  R}(\mathbb T)$ contains a subspace $L^2_{B',\mathbb R}(\mathbb T)$
consisting of those functions which are the real parts of functions in
$\mathcal A_{B'}$ restricted to $\mathbb T$.

Write $\alpha_0,\dots,\alpha_m$ for the distinct zeros of $B'$ with
respective multiplicities $t_0,\dots,t_m$, $N = \sum t_j$.  Because
each $\alpha_j \in\mathbb D$, a function in $\mathcal A_{B'}$ can be
written as
\begin{equation*}
  \varphi(z) = c + \prod_0^N (z-\alpha_j)^{t_j} g(z),
\end{equation*}
for some $g\in A(\mathbb D)$.  It is a standard result that the
complex annihilator ${\mathcal A_{B'}}^\bot$ is isometrically
isomorphic to the dual of $A(\mathbb D) /\mathcal A_{B'}$.  The latter
space is spanned by $z^k + \mathcal A_{B'}$, $k=1,\dots, N-1$, and so
has dimension $N-1$.  Hence the dimension of ${\mathcal A_{B'}}^\bot$
is also $N-1$.

The kernel functions
\begin{equation}
  \label{eq:10}
  k^{(i)}_\alpha(z) = k^{(i)}(\alpha,z) :=
  \frac{i!z^i}{(1-\overline{\alpha}z)^{i+1}}
\end{equation}
have the property that $\ip{\varphi}{k^{(i)}_\alpha} =
\varphi^{(i)}(\alpha)$, the $i$th derivative of $\varphi$ evaluated at
$\alpha$.  So for $0\leq j \leq m$, $1 \leq i \leq t_j - 1$ and
$\varphi\in \mathcal A_{B'}$,
\begin{equation*}
  \ip{\varphi}{k^{(i)}_\alpha} = 0.
\end{equation*}
This accounts for $-m + \sum t_m = N-m$ linearly independent functions
in the annihilator.  If $m > 0$, fix $\alpha_\ell$.  Then for
$j=1,\dots , m$ and $j\neq \ell$,
\begin{equation*}
  \ip{\varphi}{k^{(0)}_{\alpha_\ell} - k^{(0)}_{\alpha_j}} = c-c =
  0.
\end{equation*}
These $m-1$ functions along with the previous $N-m$ functions then
form a linearly independent set, and hence a basis for the complex
annihilator of $\mathcal A_{B'}$.  By the way, this argument works
even when no $\alpha_i = 0$.  Write $\{g_k\}$ for this set of
functions.

These functions are connected to the constraints constructed above,
since with the measure $\mu$ from the Herglotz representation, there
will be $h_j$ such that
\begin{equation*}
  0 = \ip{\varphi}{g_j} = \int_{\mathbb T} h_j \,d\mu;
\end{equation*}
namely,
\begin{equation*}
  h_j = \ip{\frac{1+\varphi}{1-\varphi}}{g_j}.
\end{equation*}

\begin{lemma}
  \label{lem:annih-A_B}
  Let $B$ be a Blaschke product with zeros $\alpha_0,\dots, \alpha_m$
  with multiplicities $t_0,\dots,t_m$, and set $N = \sum t_j$.  The
  annihilator ${\mathcal A_B}^\bot$ is an $N-1$ dimensional space,
  with basis made up of the functions $k^{(i)}(\alpha_j,\cdot)$, for
  all $0\leq j \leq m$ such that $t_j > 1$ and $1 \leq i \leq t_j -
  1$, as well as $k^{(0)}_{\alpha_0} - k^{(0)}_{\alpha_j}$ in case
  $m>1$ and $j=0,\dots , m$.
\end{lemma}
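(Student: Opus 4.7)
The plan is to describe $\mathcal A_B$ as the common kernel in $A(\mathbb D)$ of an explicit system of $N-1$ point-evaluation functionals, verify the codimension count, and then use the reproducing identity to identify those functionals with the kernel functions appearing in the statement.

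First I would characterize membership: $\varphi \in \mathcal A_B = \mathbb C + B\cdot A(\mathbb D)$ if and only if there is a scalar $c$ with $\varphi(\alpha_j) = c$ for all $0 \leq j \leq m$ and $\varphi^{(i)}(\alpha_j) = 0$ for $0 \leq j \leq m$, $1 \leq i \leq t_j - 1$. The forward direction follows from $\varphi = c + \prod_j(z-\alpha_j)^{t_j}\, g(z)$ with $g\in A(\mathbb D)$; conversely, these vanishing conditions force $\varphi - c$ to have a zero of order at least $t_j$ at each $\alpha_j$ and hence to be divisible in $A(\mathbb D)$ by $\prod_j(z-\alpha_j)^{t_j}$. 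This system consists of the $m$ equality conditions $\varphi(\alpha_0) - \varphi(\alpha_j) = 0$ ($j=1,\dots,m$) and the $\sum_{j=0}^m (t_j-1) = N - m - 1$ derivative conditions $\varphi^{(i)}(\alpha_j) = 0$, for a total of $N-1$ linear constraints, all vanishing on $\mathcal A_B$.

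Next I would check that these $N-1$ functionals are linearly independent, so that the codimension of $\mathcal A_B$ in $A(\mathbb D)$ is exactly $N-1$. The cleanest way is to invoke the fact, recorded in the discussion preceding the lemma, that the cosets $\{z^k + \mathcal A_B\}_{k=1}^{N-1}$ span $A(\mathbb D)/\mathcal A_B$; this caps the codimension at $N-1$ and, combined with the $N-1$ constraints above, forces equality and hence independence. Alternatively, one can produce a Hermite-type interpolating polynomial of degree less than $N$ biorthogonal to any chosen functional in the list, which gives independence directly.

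Finally, I would perform the identification. Using the reproducing formula $\ip{\varphi}{k^{(i)}_\alpha} = \varphi^{(i)}(\alpha)$, which is verified by expanding $k^{(i)}_\alpha(z) = i!\,z^i/(1-\overline{\alpha}z)^{i+1}$ as a power series and matching coefficients against $\varphi\in H^2$, each derivative functional $\varphi \mapsto \varphi^{(i)}(\alpha_j)$ is represented by $k^{(i)}(\alpha_j,\cdot)$, while each equality functional $\varphi \mapsto \varphi(\alpha_0) - \varphi(\alpha_j)$ is represented by $k^{(0)}_{\alpha_0} - k^{(0)}_{\alpha_j}$. Since these $N-1$ kernels represent a linearly independent set of functionals, they form a basis of $\mathcal A_B^\perp$. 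The only real obstacle I anticipate is careful index bookkeeping in the corner cases $m=0$ (a single distinct zero, where no equality functional appears) and those $j$ with $t_j=1$ (where no derivative functional is attached to $\alpha_j$); in each, the dimension count $m + (N-m-1) = N-1$ remains correct.
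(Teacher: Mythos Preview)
Your proposal is correct and follows essentially the same route as the paper, whose argument is laid out in the paragraphs immediately preceding the lemma: compute the codimension of $\mathcal A_B$ in $A(\mathbb D)$ via the quotient spanned by $\{z^k + \mathcal A_B\}_{k=1}^{N-1}$, exhibit the $N-1$ annihilating functionals (derivative evaluations and differences of point evaluations), and identify them with the stated kernel functions using $\ip{\varphi}{k^{(i)}_\alpha} = \varphi^{(i)}(\alpha)$. Your bookkeeping $m + (N-m-1) = N-1$ is in fact cleaner than the paper's, which contains a pair of offsetting off-by-one slips in the intermediate counts.
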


If $\varphi\in \mathcal A_B$, then both $\mathrm{Re}\,h_k$ and
$\mathrm{Im}\,h_k$ are orthogonal to $\mu$ in $L^2_{B',\mathbb
  R}(\mathbb T)$.  As explained in Section~4.1 of~\cite{MR3138369}
(and generalizing similar results in~\cite{MR2375060}),
\begin{equation*}
  M_{B',\mathbb R}(\mathbb T) = L^2_{B',\mathbb R}(\mathbb T)^\bot = 
  {\{\mathrm{Re}\,h_k,\ \mathrm{Im}\,h_k\}}_{k=1,\dots,N-1}^{\bot},
\end{equation*}
and
\begin{equation*}
  C_{B',\mathbb R}(\mathbb T)^\bot = \mathrm{span}\,
  {\{\mathrm{Re}\,h_k\,ds,\ \mathrm{Im}\,h_k\,ds\}}_{k=1,\dots,N-1},
\end{equation*}
a $(2N-2)$-dimensional space.  Here $ds$ represents arc-length measure
on $\mathbb T$.

\begin{theorem}
  \label{thm:extr-pt-char}
  Let $N$ be the number of zeros of $B'$, counting multiplicities.  If
  $\mu$ is an extreme point of $M^{+,1}_{B',\mathbb R}(\mathbb T)$,
  then it is a probability measure on $\mathbb T$ supported at $k$
  points, where $N \leq k \leq 2N-1$.
\end{theorem}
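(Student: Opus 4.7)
The plan is to derive the upper bound $k \le 2N-1$ from a Carath\'eodory-style perturbation argument, and the lower bound $k \ge N$ essentially for free from the Blaschke-product dictionary of Lemma~\ref{lem:finite-meas-gives-B-prod}.

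For the upper bound, I count the linear constraints cutting $M^{+,1}_{B',\mathbb R}(\mathbb T)$ out of the cone of positive Borel measures on $\mathbb T$: the equations \eqref{eq:6} and \eqref{eq:7} amount to $N-1$ complex, hence $2N-2$ real, linear conditions, to which one adds the single normalization $\mu(\mathbb T)=1$, for a total of $2N-1$ affine constraints. Suppose $\mu$ is an extreme point whose support has strictly more than $2N-1$ points; then there exist pairwise disjoint Borel sets $E_1,\dots,E_{2N}\subset\mathbb T$ with $\mu(E_i)>0$ (picking one atom apiece if there are at least $2N$ atoms, and slicing up a continuous piece of $\mu$ otherwise). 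The assignment $(c_1,\ldots,c_{2N}) \mapsto \nu := \sum_i c_i\,\mu|_{E_i}$ parameterizes a $2N$-dimensional subspace of $M_{\mathbb R}(\mathbb T)$, so by a dimension count the $2N-1$ linear functionals preserving \eqref{eq:6}--\eqref{eq:7} and total mass have a nontrivial common kernel on it. For any nonzero $\nu$ in the kernel, $\nu|_{E_i}=c_i\,\mu|_{E_i}$, so for $\varepsilon < (\max_i|c_i|)^{-1}$ both $\mu \pm \varepsilon\nu$ lie in $M^{+,1}_{B',\mathbb R}(\mathbb T)$ and have $\mu$ as their average, contradicting extremality. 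Hence $|\mathrm{supp}\,\mu| \le 2N-1$.

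For the lower bound, $\mu$ is now a purely atomic probability measure with $k \le 2N-1$ atoms, so Lemma~\ref{lem:finite-meas-gives-B-prod} produces $\varphi := M^{-1}\circ f_\mu$ as a unimodular multiple of a Blaschke product of degree exactly $k$, with $\varphi(0)=0$ because $\mu$ is a probability measure. Reading the constraints \eqref{eq:6} and \eqref{eq:7} through $f = M\circ\varphi$ translates them precisely into the interpolation conditions asserting that $\varphi$ vanishes at each $\alpha_j$ to order at least $t_j$ for $j=0,\dots,m$; equivalently, $B'$ divides $\varphi$ in $H^\infty(\mathbb D)$. Counting zeros with multiplicity then yields $k \ge \sum_j t_j = N$.

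The main obstacle I anticipate is the regularity step in the upper bound: if $\mu$ is not a priori atomic, one must produce the $2N$ disjoint positive-measure pieces from its continuous part, but this is handled by a standard Borel argument once one notes that an infinite support either already contains $2N$ atoms or has a non-trivial continuous piece that can be sliced arbitrarily finely. Once the support is known to be finite, the remainder is a clean combination of a dimension count with the Herglotz--Blaschke correspondence developed earlier in the section.
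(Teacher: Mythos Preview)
Your proposal is correct and follows essentially the same approach as the paper: a Carath\'eodory-type dimension-count perturbation for the upper bound, and the Blaschke-product dictionary of Lemma~\ref{lem:finite-meas-gives-B-prod} for the lower bound. Your treatment is in fact slightly more careful on two points---you handle a potentially non-atomic $\mu$ explicitly in the upper bound, and you phrase the lower bound cleanly as ``$B'$ divides $\varphi$'' rather than counting zeros of $\tilde B - c$---but these are cosmetic improvements on the same argument.
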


\begin{proof}
  The idea of the proof for the upper bound is the same as for
  Theorem~5 of \cite{MR2946923} and Lemma~3.5 of \cite{MR2375060}.
  Since the codimension of $M_{B',\mathbb R}(\mathbb T)$ in
  $M_{\mathbb R}(\mathbb T)$ is $2N-2$, if a measure $\mu \geq 0$ is
  supported at $2N$ or more points, $\dim(M_{B',\mathbb R}(\mathbb T)
  \cap M_{\mathbb R}(\mathbb T)) \geq 2$, and so this space contains a
  nonzero measure $\nu \geq 0$ which is linearly independent of $\mu$.
  For small enough $\epsilon > 0$, $\mu \pm \epsilon \nu \geq 0$, and
  then since $\mu$ is a convex combination of these, it is not
  extremal.

  Now consider the lower bound, and suppose $\mu$ is supported at $n <
  N$ points.  By Lemma~\ref{lem:finite-meas-gives-B-prod}, $\mu$ is
  associated to a Blaschke product $\tilde B = \prod_1^n
  \frac{z-\beta_j}{1-\overline{\beta_j}z}$ with $n$ zeros.  Let
  $\alpha_0,\dots,\alpha_m$ be the zeros of $B$ with multiplicities
  $t_0,\dots,t_m$.  Since $\tilde B \in \mathcal A_B$, there is a
  constant $c$ such that $\tilde B(\alpha_j) = c$ for all $j$.  Then
  for each $j$, $\tilde B(z) - c$ is seen to have a zero of
  multiplicity $t_j$ at $\alpha_j$, and so $\tilde B(z) - c$ has at
  least $N$ zeros.  But
  \begin{equation*}
    \tilde B(z) - c = \frac{\prod_1^n(z-\beta_j) - c \prod_1^n(1 -
      \overline{\beta_j}z)}{\prod_1^n(1 - \overline{\beta_j}z)},
  \end{equation*}
  and since the numerator is a polynomial of degree $n<N$, there is a
  contradiction.
\end{proof}

A similar construction can be carried out for $\mathcal A_B^0$.  Since
$\mathcal A_B^0 \subseteq \mathcal A_B$, for $\varphi \in \mathcal
A_B^0$, $f = \frac{1+\varphi}{1-\varphi}$ will have a Herglotz
representation with a measure $\mu$ satisfying the constraints in
\eqref{eq:6} and \eqref{eq:7}, as well as other, more complex
constraints if $N > 2$.

Counting the functions in $A(\mathbb D)$ of the form $z^iB^j$ as given
in Theorem~\ref{thm:algebras-isom-isom}, the dimension of
$A_B^{0\,\bot}$ is found to be $N(N-1)/2$.  A total of $N-1$ of these
are listed in Lemma~\ref{lem:annih-A_B}.  To recover the other
$(N-1)(N-2)/2$ (where without loss of generality it is now assumed
that $N > 2$), first observe that by
Theorem~\ref{thm:algebras-isom-isom}, $\mathcal A_B^0 \supseteq
\mathcal A_{B^{N-1}}$.  So the remaining elements of a basis for
$A_B^{0\,\bot}$ can be expressed in terms of the basis for $\mathcal
A_{B^{N-1}}^\bot$ (as given by Lemma~\ref{lem:annih-A_B}) minus those
in $\mathcal A_B^\bot$ from the same lemma; that is, in terms of
$k^{(i)}(\alpha_j,\cdot)$, $t_j \leq i \leq (N-1)t_j - 1$.  There are
a total of $N(N-2)-m$ such functions.

A general function in $\mathcal A_B^0$ has the form
\begin{equation*}
  \varphi(z) = \sum_{r=0}^{N-2} B(z)^r \sum_{s=0}^r a_{rs} z^s +
  B(z)^{N-1}g(z), \quad g\in A(\mathbb D),\ a_{rs}\in \mathbb C \text{
    for all }r,s.
\end{equation*}
Let $v = (v_{\ell,j_\ell}) \in \mathbb C^{N(N-2)-m}$ be such that for
all $\varphi \in \mathcal A_B^0$,
\begin{equation*}
  \begin{split}
    0 &= \ip{\varphi}{\sum_{\ell = 1}^m \sum_{j_\ell =
        t_\ell}^{(N-1)t_\ell - 1} v_{\ell,j_\ell}
      k_{\alpha_\ell}^{(j_\ell)}} \\
    & = \sum_{\ell = 1}^m \sum_{j_\ell = t_\ell}^{(N-1)t_\ell - 1}
    \overline{v_{\ell,j_\ell}} \sum_{r=0}^{N-2} \sum_{s=0}^r
    \sum_{q=0}^{j_\ell} a_{rs} \binom{j_\ell}{q} B^{r\,(j_\ell -
      q)}(\alpha_\ell) \alpha_\ell^{s\,(q)} \\
    & = \sum_{r=0}^{N-2} \sum_{s=0}^r a_{rs} \left[\sum_{\ell = 1}^m
      \sum_{j_\ell = t_\ell}^{(N-1)t_\ell - 1}
      \left(\sum_{q=0}^{j_\ell} B^{r\,(j_\ell - q)}(\alpha_\ell)
        \alpha_\ell^{s\,(q)}\right) \overline{v_{\ell,j_\ell}}
    \right],
  \end{split}
\end{equation*}
with the shorthand notation $B^{r\,(j_\ell - q)}(\alpha_\ell) =
\frac{d^{j_\ell - q}}{dz^{j_\ell - q}} B^r(z) |_{\alpha_\ell}$ and
$\alpha_\ell^{s\,(q)} = \frac{d^q}{dz^q} z^s |_{\alpha_\ell}$.  Define
$N(N-1)/2$ vectors $b^{rs} = (b^{rs}_{\ell,j_\ell}) \in \mathbb
C^{N(N-2)-m}$ by
\begin{equation}
  \label{eq:11}
  \begin{split}
    b^{rs}_{\ell,j_\ell} & = \sum_{q=0}^{j_\ell} B^{r\,(j_\ell -
      q)}(\alpha_\ell) \alpha_\ell^{s\,(q)} \\
    &=
    \begin{cases}
      \sum_{q=0}^{\min\{s, j_\ell - rq\}}
      \frac{s!}{(s-q)!}\binom{j_\ell}{q} B^{r\,(j_\ell -
        q)}(\alpha_\ell) \alpha_\ell^{s-q} & j_\ell - rq \geq 0, \\
      0 & \text{otherwise,}
    \end{cases}
  \end{split}
\end{equation}
using the fact that $B^{r\,(n)}(\alpha_\ell) = 0$ if $n < rt_\ell$ for
the last equality.  Accordingly, there will be $(N-1)(N-2)/2$ nonzero,
linearly independent vectors $v$ such that $\ip{b^{rs}}{v} = 0$.

As a simple illustration of this, suppose $B(z) = z^N$, $N > 2$.  Thus
$m = \ell = 1$, $t_\ell = N$ and $N \leq j_1 \leq N(N-1) - 1$.
Nonzero entries in $b^{rs}$ require $j_\ell = rN+t$ and $s = t$, so
$rN \leq j_\ell \leq N(N-1) - 1$.  Clearly, if $r = 0$, then $s = 0$
and so $b^{00}_{1,j_\ell} = 0$ for all $j_\ell$.  On the other hand,
for $r>0$, it follows that since $s \leq r \leq N-2$, $j_\ell -
rt_\ell \geq (N-1)r$.  Thus the only nonzero term in the last sum in
\eqref{eq:11} occurs when $t = s$.  Correspondingly, $j_\ell = rN +
s$, in which case the vector $b^{rs}$ has all entries equal to $0$
except the $(1,rN + s)$ entry, which equals $s!\binom{rN + s}{s} =
\frac{(rN+s)!}{(rN)!}$.  It is then a straightforward exercise to
choose the set of $(N-1)(N-2)/2$ linearly independent vectors $v$
orthogonal to the vectors $b^{rs}$.  For example, take $k^{(i)}_0(z) =
i!z^i$ with $i\neq rN+s$, $1 \leq r \leq N-2$ and $0 \leq s \leq r$.

In fact this can be seen more directly, since a typical element of
$\mathcal A_B^0$ has the form
\begin{equation*}
  \sum_{r=0}^{N-2} \sum_{s=0}^r a_{rs} z^{rN+s} +
  B(z)^{N-1}g(z), \quad g\in A(\mathbb D),\ a_{rs}\in \mathbb C \text{
    for all }r,s.
\end{equation*}
The $N-1$ basis elements contributed from $\mathcal A_B^\bot$ have the
form $k^{(i)}_0(z) = i!z^i$, where $1 \leq i \leq N-1$.  Skip the
$N$th and $(N+1)$st $k^{(i)}_0$ since these are not orthogonal to
$B(z) = z^N$ and $zB(z) = z^{N+1}$.  However, $k^{(i)}_0$, $N+2 \leq i
\leq 2N-1$, will give $0$, and there are $N-2$ of these.  Continue in
this fashion, with the last basis element being $k^{(N(N-1)-1)}_0$,
for a total of $N(N-1)/2$ basis vectors.

As it happens, not all of the functions in
\begin{equation*}
  \{cB^rz^s: 1 \leq r \leq N-2,\ 0 \leq s \leq r,\
  cB^r(1) =1\} \cup
  \{cB^{N-1}m_\alpha : \alpha \in \hat{\mathbb D},\
  cB^{N-1}(1)m_\alpha(1) = 1\},
\end{equation*}
will be needed to form a set of test functions.  Since for $r \geq s$,
\begin{equation*}
  \begin{split}
    & 1-z^sB^r(z)B^{*r}(w)w^{*s}\\ =&\, 1-B^{r-s}(z)B^{* r-s}(w) +
    B^{r-s}(z)(1-z^sB^s(z)B^{*s}(w)w^{*s})B^{* r-s}(w) \\
    =&\, (1-B(z)B^*(w)) +B(z)(1-B(z)B^*(w))B^*(w) + \\
    &\quad \cdots +
    B(z)^{r-s-1}(1-B(z)B^*(w))B^{*r-s-1}(w) + \\
    +& B(z)^{r-s}\left((1-zB(z)B^*(w)w^*)B(w)^{*r-s} +
      zB(z)(1-zB(z)B^*(w)w^*)B^*(w)w^* + \right. \\
    &\quad \left.\cdots +
      z^{s-1}B(z)^{s-1}(1-zB(z)B^*(w)w^*)B^{*s-1}(w)w^{*s-1} \right)
    B^{*r-s}(w)\\
    =&\, h_B(z)(1-B(z)B^*(w))h_B^*(w) +
    h_{zB}(z)(1-zB(z)B^*(w)w^*)h_{zB}^*(w),
  \end{split}
\end{equation*}
the set
\begin{equation}
  \label{eq:12}
  \{B,zB\} \cup \{cB^{N-1}m_\alpha : \alpha \in \hat{\mathbb D},\
  cB^{N-1}(1)m_\alpha(1) = 1\},
\end{equation}
will suffice.

As before, equate the elements of the basis for ${\mathcal
  A_B}^{0\,\bot}$ with a set of constraints on a probability measure
$\mu$.  Let $\mathcal R$ denote the collection of constraints.  The
elements of $\mathcal R$ will involve not only terms like those given
in \eqref{eq:6} and \eqref{eq:7}, but also linear combinations of such
terms.  In the example where $B(z) = z^N$, the constraints have the
form given in \eqref{eq:7}, but now with $k \in \{0,\dots,N-1\}\cup
\{N+2,\dots,2N-1\} \cup \cdots \cup \{N(N-1)-1\}$.

A probability measure $\mu$ satisfying the constraints in $\mathcal R$
gives rise via the Herglotz representation to an analytic function $f$
on $\mathbb D$ which has positive real part and equals $1$ at $0$.  A
Cayley transform of $f$ then yields an element $\varphi$ of the
algebra $\mathcal A_B^0$ which is zero at $0$.  Conversely, every such
element of the algebra gives rise to a probability measure satisfying
these constraints.  Write $M^{+,1}_{B',0,\mathbb R}(\mathbb T)$ for
the set of all such measures.  This is a compact, convex set.  Denote
by ${\hat\Theta}^0$ the extreme points of this set of measures.

The proofs of the next three results mimic those given earlier in the
context of $\mathcal A_B$, and so are omitted.

\begin{theorem}[Agler-Herglotz representation associated to $\mathcal
  A_B^0$]
  \label{thm:Agler-Herglotz-repn-AB0}
  Let $f$ be analytic on $\mathbb D$ with positive real part, and
  suppose further that $\varphi = \frac{1-f}{1+f} \in \mathcal A_B$
  with $\varphi(\alpha_j) = 0$, $1 \leq j \leq m$.  Then there is a
  probability measure $\nu$ on the set of extreme points
  ${\hat\Theta}^0$ of $M^{+,1}_{B',0,\mathbb R}(\mathbb T)$ such that
  \begin{equation*}
    f(z) = \int_{{\hat\Theta}^0} f_\mu(z) \, d\nu(\mu).
  \end{equation*}
\end{theorem}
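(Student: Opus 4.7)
The plan is to mimic the proof of Theorem~\ref{thm:Agler-Herglotz-repn-AB}, with the only new ingredient being that the system of $N-1$ elementary moment constraints used there is replaced by the enlarged system $\mathcal R$ of $N(N-1)/2$ constraints defining $M^{+,1}_{B',0,\mathbb R}(\mathbb T)$. The hypothesis should be read as $\varphi \in \mathcal A_B^0$ (otherwise the measure cannot be guaranteed to lie in $M^{+,1}_{B',0,\mathbb R}(\mathbb T)$); with that reading, essentially every nontrivial step has already been laid out immediately before the theorem.

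First I would invoke Herglotz: since $\varphi \in \mathcal A_B^0 \subset \mathcal A_{B'} = \mathbb C + B' A(\mathbb D)$ and $\varphi(\alpha_j) = 0$ for $1\leq j\leq m$, the common constant value of $\varphi$ on the zeros of $B'$ must be $0$, so $\varphi(0) = 0$ and $f(0) = 1$. The Herglotz representation theorem then produces a unique probability measure $\mu$ on $\mathbb T$ with $f(z) = \int_\mathbb T \frac{w+z}{w-z}\, d\mu(w)$. Next, each basis element $\ell$ of ${\mathcal A_B^0}^\perp$ --- the $N-1$ elementary functionals giving the constraints \eqref{eq:6}--\eqref{eq:7}, together with the $(N-1)(N-2)/2$ linear combinations $\sum v_{\ell,j_\ell} k_{\alpha_\ell}^{(j_\ell)}$ with $v \perp b^{rs}$ as in \eqref{eq:11} --- translates via the pairing $\ell(\varphi) = 0$ into a moment identity $\int_\mathbb T h_\ell\, d\mu = 0$ with $h_\ell \in C(\mathbb T)$; the totality of these identities is exactly $\mathcal R$, so $\mu \in M^{+,1}_{B',0,\mathbb R}(\mathbb T)$.

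Finally, $M^{+,1}_{B',0,\mathbb R}(\mathbb T)$ is the intersection of the weak-$*$ compact, metrizable convex set of probability measures on $\mathbb T$ with a finite family of weak-$*$ closed hyperplanes, hence is itself weak-$*$ compact, convex and metrizable. The Choquet-Bishop-de~Leeuw theorem therefore delivers a probability measure $\nu$ on the set $\hat\Theta^0$ of its extreme points with $\mu = \int_{\hat\Theta^0} \theta\, d\nu(\theta)$ in the weak-$*$ sense, and a Fubini interchange, permissible because $w \mapsto (w+z)/(w-z)$ is continuous on $\mathbb T$ for each fixed $z \in \mathbb D$, converts this into
\begin{equation*}
    f(z) = \int_{\hat\Theta^0} f_\theta(z)\, d\nu(\theta),
\end{equation*}
which is the claimed representation.

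The main obstacle --- and the only point that goes beyond a literal rerun of the proof of Theorem~\ref{thm:Agler-Herglotz-repn-AB} --- is the verification that the $(N-1)(N-2)/2$ non-elementary basis functionals for ${\mathcal A_B^0}^\perp$ genuinely correspond, through the Cayley/Herglotz dictionary, to integration-against-$C(\mathbb T)$ constraints on $\mu$, and that the resulting list is exactly the defining system $\mathcal R$ of $M^{+,1}_{B',0,\mathbb R}(\mathbb T)$. Once this bookkeeping is in place, the compactness, the Choquet decomposition, and the Fubini step are routine and parallel the proof in the $\mathcal A_B$ case.
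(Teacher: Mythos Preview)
Your proposal is correct and matches the paper's own approach: the paper explicitly omits the proof, stating that it ``mimic[s] those given earlier in the context of $\mathcal A_B$,'' which is precisely the argument you outline. Your observation that the hypothesis should read $\varphi \in \mathcal A_B^0$ rather than $\mathcal A_B$ is also right and consistent with the paragraph immediately preceding the theorem.
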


\begin{lemma}
  \label{lem:annih-A_B0}
  Let $B$ be a Blaschke product with zeros $\alpha_0,\dots, \alpha_m$
  with multiplicities $t_0,\dots,t_m$, and set $N = \sum t_j$.  The
  annihilator ${\mathcal A_B}^{0\,\bot}$ is an $N(N-1)/2$ dimensional
  space containing ${\mathcal A_B}^\bot$ and contained in the
  annihilator ${\mathcal A_{B^{N-1}}}^\bot$.
\end{lemma}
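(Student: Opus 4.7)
The plan is to derive all three assertions of the lemma as nearly immediate consequences of Theorem~\ref{thm:algebras-isom-isom}, Lemma~\ref{lem:annih-A_B}, and the explicit description of ${\mathcal A_B}^{0\,\bot}$ developed in the paragraphs preceding the statement. The proof is intended to mirror the one sketched for Lemma~\ref{lem:annih-A_B}, as the text promises.

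First I would dispatch the two inclusion claims using the order-reversing property of annihilators. Theorem~\ref{thm:algebras-isom-isom} establishes the chain of subalgebras $\mathcal A_{B^{N-1}} \subseteq \mathcal A_B^0 \subseteq \mathcal A_B$ (the right inclusion is built into the definition of $\mathcal A_B^0$ as a subalgebra of $\mathcal A_B$, while the left is exactly the last assertion of that theorem), and taking annihilators immediately yields $\mathcal A_B^{\bot} \subseteq {\mathcal A_B}^{0\,\bot} \subseteq \mathcal A_{B^{N-1}}^{\bot}$, which is what the lemma requires.

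For the dimension, the cleanest route is to invoke the codimension statement of Theorem~\ref{thm:algebras-isom-isom}: since $\mathcal A_B^0$ has codimension $N(N-1)/2$ in $A(\mathbb D)$, standard duality identifies ${\mathcal A_B}^{0\,\bot}$ isometrically with the dual of $A(\mathbb D)/\mathcal A_B^0$, and hence its dimension is $N(N-1)/2$. If instead one wants an explicit basis, one can combine the $N-1$ functionals giving a basis of $\mathcal A_B^{\bot}$ from Lemma~\ref{lem:annih-A_B} with $(N-1)(N-2)/2$ linearly independent functionals drawn from $\mathcal A_{B^{N-1}}^{\bot} \setminus \mathcal A_B^{\bot}$. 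The candidates for these additional functionals are the $N(N-2)-m$ kernel functionals $k^{(i)}(\alpha_j,\cdot)$ with $t_j \leq i \leq (N-1)t_j - 1$, cut down by the orthogonality relations $\ip{b^{rs}}{v} = 0$ with the vectors $b^{rs}$ from \eqref{eq:11}, one for each generator $B^r z^s$ of $\mathcal A_B^0$ with $0 \leq s \leq r \leq N-2$ and $(r,s) \neq (0,0)$.

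The main obstacle in the explicit approach would be verifying that the system $\{\ip{b^{rs}}{v} = 0\}$ has solution space of dimension exactly $(N-1)(N-2)/2$ inside $\mathbb C^{N(N-2)-m}$. The worked case $B(z) = z^N$ in the preceding discussion illustrates how the vectors $b^{rs}$ become essentially block-diagonal and the rank calculation becomes routine, but in full generality it requires bookkeeping based on the Leibniz expansion of $B^{r\,(k)}(\alpha_\ell)$ at the distinct zeros. Since the codimension identity in Theorem~\ref{thm:algebras-isom-isom} is established independently of this linear algebra, this rank calculation can be bypassed entirely, and this is the route I would take in the actual write-up.
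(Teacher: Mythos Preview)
Your proposal is correct and matches the paper's approach: the paper explicitly omits the proof, saying it ``mimic[s] those given earlier in the context of $\mathcal A_B$,'' and the surrounding discussion already lays out precisely the ingredients you cite---the codimension count from Theorem~\ref{thm:algebras-isom-isom} for the dimension, the subalgebra chain $\mathcal A_{B^{N-1}} \subseteq \mathcal A_B^0 \subseteq \mathcal A_B$ for the inclusions, and the explicit basis extraction via the vectors $b^{rs}$. Your observation that the duality argument lets one bypass the rank computation for the $b^{rs}$ system is well taken and is implicitly how the paper proceeds as well.
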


\begin{theorem}
  \label{thm:extr-pt-A_B0-char}
  Let $N$ be the number of zeros of $B'$, counting multiplicities.  If
  $\mu$ is an extreme point of $M^{+,1}_{B',0,\mathbb R}(\mathbb T)$,
  then $\mu$ is probability measure on $\mathbb T$ supported at $k$
  points, where $N \leq k \leq N(N-1)+1$.
\end{theorem}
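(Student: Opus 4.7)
The plan is to mirror the proof of Theorem~\ref{thm:extr-pt-char}, with the annihilator data for $\mathcal A_B^0$ (dimension $N(N-1)/2$, as in Lemma~\ref{lem:annih-A_B0}) replacing the annihilator data for $\mathcal A_B$ (dimension $N-1$). Accordingly, the predicted bounds $N \leq k \leq N(N-1)+1$ should emerge from, respectively, an unchanged Blaschke-product argument on the low end and a doubled codimension count on the high end.

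For the upper bound, I would first translate the $N(N-1)/2$ complex basis elements of ${\mathcal A_B}^{0,\bot}$ into constraints on $\mu$ by the recipe used earlier (each basis element $g_k$ produces an $h_k = \ip{(1+\varphi)/(1-\varphi)}{g_k}$ with $\int h_k\,d\mu = 0$). Taking real and imaginary parts yields $N(N-1)$ real-linear constraints cutting out $M_{B',0,\mathbb R}(\mathbb T)$ inside $M_{\mathbb R}(\mathbb T)$, so the codimension is $N(N-1)$. Now if $\mu$ were supported at $k \geq N(N-1)+2$ points, then the $k$-dimensional space of real measures on those points would meet $M_{B',0,\mathbb R}(\mathbb T)$ in dimension at least $k - N(N-1) \geq 2$, providing a $\nu$ that is linearly independent of $\mu$. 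For small $\epsilon > 0$ the signed measures $\mu \pm \epsilon\nu$ are both nonnegative (because the supports match) and both lie in $M^{+}_{B',0,\mathbb R}(\mathbb T)$, so after normalization $\mu$ becomes a nontrivial convex combination of two distinct elements of $M^{+,1}_{B',0,\mathbb R}(\mathbb T)$, contradicting extremality.

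For the lower bound, the argument of Theorem~\ref{thm:extr-pt-char} transfers directly because $\mathcal A_B^0 \subseteq \mathcal A_B$. If $\mu$ is supported at $n < N$ points, then Lemma~\ref{lem:finite-meas-gives-B-prod} produces a Blaschke product $\tilde B$ with exactly $n$ zeros whose Cayley transform gives $f$. Since $\tilde B \in \mathcal A_B^0 \subseteq \mathcal A_B$, there exists a constant $c$ with $\tilde B(\alpha_j)=c$ at each distinct zero $\alpha_j$ of $B$, and the vanishing of higher derivatives dictated by the interpolation constraints~\eqref{eq:6},~\eqref{eq:7} forces $\tilde B - c$ to have a zero of order $t_j$ at each $\alpha_j$. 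The numerator of $\tilde B - c$ is then a polynomial of degree $n < N = \sum t_j$ with $N$ zeros, impossible.

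The main obstacle I anticipate is verifying the independence claim underlying the codimension $N(N-1)$: one must check that the real and imaginary parts of the $N(N-1)/2$ functions $h_k$ are linearly independent over $\mathbb R$, so that the constraints genuinely cut codimension $N(N-1)$ rather than something smaller. This is the precise analogue of the unstated independence used in Theorem~\ref{thm:extr-pt-char}, and I would establish it by exhibiting the explicit basis described in the vectors $b^{rs}$ of \eqref{eq:11} together with the reduction to the generating set in \eqref{eq:12}, then using the Poisson-kernel pairing to pull back linear dependencies among the $\{\mathrm{Re}\,h_k, \mathrm{Im}\,h_k\}$ to dependencies among basis vectors of ${\mathcal A_B}^{0,\bot}$, yielding a contradiction. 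Everything else is routine once this codimension count is in hand.
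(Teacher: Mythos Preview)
Your proposal is correct and follows exactly the approach the paper intends: the paper explicitly states that the proof ``mimics'' that of Theorem~\ref{thm:extr-pt-char} with the $\mathcal A_B^0$ annihilator data in place of the $\mathcal A_B$ data, and omits the details. Your codimension count $2\cdot N(N-1)/2 = N(N-1)$ for the upper bound and your use of the containment $\mathcal A_B^0 \subseteq \mathcal A_B$ for the lower bound are precisely the required adaptations; your flagged concern about the real-linear independence of $\{\mathrm{Re}\,h_k,\,\mathrm{Im}\,h_k\}$ is a point the paper also leaves implicit.
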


Now translate our results on measures to statements about functions in
the unit balls of $\mathcal A_{B'}$ and $\mathcal A^0_{B'}$.  Using a
Cayley transform from the right half plane to the unit disk, for each
$\mu\in M^{+,1}_{B',\mathbb R}(\mathbb T)$ (respectively,
$M^{+,1}_{B',0,\mathbb R}(\mathbb T)$), define a map from $\mathbb D$
to itself
\begin{equation*}
  \psi_\mu := \frac{f_\mu-1}{f_\mu+1},
\end{equation*}
where $f_\mu$ is the function coming from the Herglotz representation
corresponding to $\mu$.  Then
\begin{equation*}
  1-\psi_\mu(z)\psi_\mu(w)^* = 2\frac{f_\mu(z) +
    f_\mu(w)^*}{(f_\mu(z)+1)(f_\mu(w)^*+1)}.
\end{equation*}

If $\mu$ is an extremal measure in either $M^{+,1}_{B',\mathbb
  R}(\mathbb T)$ or $M^{+,1}_{B',0,\mathbb R}(\mathbb T)$, by
Theorem~\ref{thm:extr-pt-char}, it is a finitely supported atomic
probability measure on $\mathbb T$.  It then follows from
Lemma~\ref{lem:finite-meas-gives-B-prod} that for $\mathcal A_B$,
$\psi_\mu = B'R_\mu$, where $R_\mu$ is a Blaschke product, with number
of zeros between $0$ and $N-1$ (where a Blaschke product with no zeros
is taken to be the constant $1$).  For $\mathcal A_B^0$, $R_\mu$ will
be a Blaschke product with the number of zeros is between $0$ and
$N(N-1)+1 -N = (N-1)^2$.  It is evident from the form of elements of
$\mathcal A_B^0$ given in \eqref{eq:12} that any Blaschke product
corresponding to a measure with between $N$ and $N(N-1)$ zeros is of
the form $B$, $zB$ or $B^{N-1}$.  If it has $N(N-1)+1$ zeros, it is of
the form $B^{N-1} m_\alpha$, where $m_\alpha =
\frac{z-\alpha}{1-\overline{\alpha}z}$.

In both cases, the support of the measure $\mu$ corresponds to the set
$\psi_\mu^{-1}(1)$.  Ultimately, a subset of such functions will be
used as test functions.  By the realization theorem, any test function
can be replaced by a unimodular constant times the test function.  So
for convenience, identify $\psi_\mu(z)$ with
$\overline{\psi_\mu(1)}\psi_\mu(z)$.  This amounts to ensuring that
$1$ is one of the support points for the measure $\mu$.  Let $\Theta$
(respectively, $\Theta^0$) be the subset of measures in $\hat\Theta$
(respectively, ${\hat\Theta}^0$) having $1$ as a support point, and
write $\Psi_{B'}$ (respectively, $\Psi_{B'}^0$) for the collection
$\{\psi_\mu\}_{\mu\in \Theta}$ (respectively, $\{\psi_\mu\}_{\mu\in
  \Theta^0}$).  Then $\Psi_{B'}$ is a set of test functions for
$\mathcal A_{B'}$ and $\Psi_{B'}^0$ is a set of test functions for
$\mathcal A_{B'}^0$.

Suppose that $\varphi$ is in the unit ball of $\mathcal A_{B'}$ with
$\varphi(0) = 0$.  Then $f = M\circ \varphi$ is a holomorphic function
from $\mathbb D$ to $\mathbb H$ for which \eqref{eq:8} holds.  Also,
\begin{equation*}
  \varphi = \frac{f-1}{f+1}.
\end{equation*}
Hence
\begin{equation*}
  1 - \varphi(z)\varphi(w)^* =
  2\frac{f(z)+f(w)^*}{(f(z)+1)(f(w)^*+1)}.
\end{equation*}
As noted above, this in particular holds when $\varphi = \psi_\mu$ and
$f = f_\mu$.  Applying the Agler-Herglotz representation
(Theorem~\ref{thm:Agler-Herglotz-repn-AB}), there is a probability
measure $\nu$ on $\hat\Theta$ such that \eqref{eq:9} holds.  Thus
\begin{equation}
  \label{eq:13}
  \begin{split}
    1 - \varphi(z)\varphi(w)^* &= \frac{2}{(f(z)+1)(f(w)^*+1)}
    \int_{\hat\Theta} (f_\mu(z) +
    f_\mu(w)^*) \, d\nu(\mu) \\
    &= \int_{\hat\Theta} H_\mu(z) (1-\psi_\mu(z)\psi_\mu(w)^*)
    H_\mu(w)^* \, d\nu(\mu),
  \end{split}
\end{equation}
$H_\mu = \frac{(f_\mu + 1)}{f+1}$.  It follows that
\begin{equation*}
  1 - \varphi(z)\varphi(w)^* = \Gamma(z,w)(1-E(z)E(w)^*),
\end{equation*}
with $\Gamma : \mathbb D\times \mathbb D \to C(\Psi_{B'})$ the
positive kernel given by
\begin{equation*}
  \Gamma(z,w)g = \int_{\hat\Theta} H_\mu(z)
  g(\psi_\mu) H_\mu(w)^* \, d\nu(\mu).
\end{equation*}

More generally, if $\varphi(0) =c \neq 0$, define
\begin{equation*}
  \tilde\varphi(z) = \frac{\varphi(z) - c}{1-
    \overline{c}\varphi(z)}.
\end{equation*}
Then
\begin{equation*}
  1 - \tilde\varphi(z)\overline{\tilde\varphi(w)} =
  \frac{(1-c\overline{c})(1 -
    \varphi(z)\overline{\varphi(w)})}{(1-\overline{c}\varphi(z))
    (1-c\overline{\varphi(w)})}.
\end{equation*}
Now define $\Gamma$ as before, but with
\begin{equation*}
  H_\mu = \frac{\sqrt{1-c\overline{c}}(1 - \overline{c}\varphi)
    (f_\mu +1)}{\tilde f + 1},
\end{equation*}
where $\tilde f = M\circ \tilde\varphi$, and $\nu$ is chosen as the
probability measure associated to $\tilde\varphi$ in the
Agler-Herglotz theorem.

Once again, the same arguments work with $\mathcal A_B^0$ in place of
$\mathcal A_B$.  Combining Theorem~\ref{thm:extr-pt-char} with
Lemma~\ref{lem:finite-meas-gives-B-prod} yields the following.

\begin{corollary}
  \label{cor:extremals-are-B-prods}
  Let $\hat\Theta$ be the set of extreme measures in
  $M^{+,1}_{B',\mathbb R}(\mathbb T)$.  Then $\hat\Theta$ is a subset
  of the atomic probability measures supported at $N \leq k \leq 2N-1$
  points in $\mathbb T$, where $N$ is the number of zeros of $B'$,
  counting multiplicity.  Furthermore, the set
  \begin{equation*}
    \left\{\psi_\mu = cBR_\mu : R_\mu\text{ a Blaschke product with
        between }0\text{ and }N-1\text{ zeros},\
      c = \prod\frac{1-\overline{\alpha_j}}{1-\alpha_j} \right\}.
  \end{equation*}
\end{corollary}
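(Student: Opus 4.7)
The plan is to combine the two structural results that immediately precede the corollary. The cardinality statement is nothing more than a restatement of Theorem~\ref{thm:extr-pt-char}: any $\mu\in\hat\Theta$ is an extreme point of $M^{+,1}_{B',\mathbb R}(\mathbb T)$, hence (by that theorem) an atomic probability measure whose support in $\mathbb T$ has between $N$ and $2N-1$ elements. So the first sentence requires no further argument.

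For the structural description of $\psi_\mu$, the idea is to transport the information through the Cayley transform. Given $\mu\in\hat\Theta$ with $k$ atoms ($N\le k\le 2N-1$), Lemma~\ref{lem:finite-meas-gives-B-prod} produces a unimodular constant $c_0$ and a Blaschke product $B_\mu$ with exactly $k$ zeros (counted with multiplicity) such that $\psi_\mu=c_0 B_\mu$ and $\psi_\mu(0)\in\mathbb R$. The next step is to show that $B'$ divides $B_\mu$. Because $\mu\in M^{+,1}_{B',\mathbb R}(\mathbb T)$, the constraints \eqref{eq:6}--\eqref{eq:7} are satisfied, and the calculation immediately following~\eqref{eq:7} in the text already shows that these constraints are equivalent to $f_\mu(\alpha_j)=1$ and $f_\mu^{(k)}(\alpha_j)=0$ for $1\le k\le t_j-1$, $0\le j\le m$. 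Transporting back through $M^{-1}$ and using Fa\`a di Bruno (as in the forward direction), one obtains $\psi_\mu(\alpha_j)=0$ to order $t_j$ at each distinct zero $\alpha_j$ of $B'$. Since each zero of $B'$ is matched with the required multiplicity in $B_\mu$, one can factor $B_\mu=B' R_\mu$ where $R_\mu$ is a Blaschke product with $k-N$ zeros, so $0\le\deg R_\mu\le N-1$, as claimed.

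It remains to fix the unimodular normalization. After the identification $\psi_\mu\mapsto \overline{\psi_\mu(1)}\psi_\mu$ introduced just before the corollary (to force $1$ into the support of $\mu$), the only unimodular constant left is the one that makes $\psi_\mu(1)=1$. Since $B'(1)=\prod_j \frac{1-\alpha_j}{1-\overline{\alpha_j}}$ and $R_\mu$ can be absorbed to be normalized so that $R_\mu(1)=1$, the required constant is $c=\prod_j\frac{1-\overline{\alpha_j}}{1-\alpha_j}$, which matches the formula in the statement. Thus every $\psi_\mu$ corresponding to a $\mu\in\Theta$ has the indicated form.

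The only place that requires genuine care is the factorization step: one must be sure that the constraints~\eqref{eq:6}--\eqref{eq:7}, which a priori are just integral identities for $\mu$, really do force $\psi_\mu$ to vanish to the correct order at each $\alpha_j$ (not merely at $\alpha_0=0$). The rest is bookkeeping on degrees and unimodular constants. Once the order of vanishing at every distinct $\alpha_j$ is established, unique factorization of Blaschke products in $\overline{\mathbb D}$ does the remaining work, and the upper and lower bounds on $\deg R_\mu$ come directly from $N\le k\le 2N-1$ in Theorem~\ref{thm:extr-pt-char}.
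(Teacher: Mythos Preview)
Your proposal is correct and follows exactly the route the paper indicates: the corollary is stated there as an immediate consequence of combining Theorem~\ref{thm:extr-pt-char} with Lemma~\ref{lem:finite-meas-gives-B-prod}, together with the constraint discussion surrounding \eqref{eq:6}--\eqref{eq:7} that forces $\psi_\mu$ to vanish to order $t_j$ at each $\alpha_j$. You have simply spelled out the details the paper leaves implicit, including the factorization $B_\mu = B'R_\mu$ and the normalization of the unimodular constant.
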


\begin{corollary}
  \label{cor:extremals-are-B-prods-A_B0}
  Let ${\hat\Theta}^0$ be the set of extreme measures in
  $M^{+,1}_{B',0,\mathbb R}(\mathbb T)$.  Let $N$ be the number of
  zeros of $B'$ counting multiplicities.  Then ${\hat\Theta}^0$ is a
  subset of the atomic probability measures supported at $rN +s$
  points in $\mathbb T$, $1 \leq r \leq N-2$, $0 \leq s \leq r$, which
  are associated to the Blaschke products $B^rz^s$, as well as those
  coming from $B^{N-1}$ or $B^{N-1}m_\alpha$, $m_\alpha$ a M\"obius
  map.  Furthermore,
  \begin{equation*}
    \Psi_{B'}^0 = \left\{cBz^s: s\in \{0,1\},\
      cB(1) =1\right\} \cup
      \left\{cB^{N-1}m_\alpha : \alpha \in \hat{\mathbb D},\
      cB^{N-1}(1)m_\alpha(1) = 1\right\},
  \end{equation*}
  is a collection of test functions for $\mathcal A_{B'}^0$, where
  $\hat{\mathbb D} = \mathbb D \cup \{\infty\}$ is the one point
  compactification of $\mathbb D$ and $m_\infty = 1$.
\end{corollary}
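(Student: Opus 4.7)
The plan is to run the analogous argument to Corollary~\ref{cor:extremals-are-B-prods}, now splicing together Theorem~\ref{thm:extr-pt-A_B0-char} with Lemma~\ref{lem:finite-meas-gives-B-prod}, and then using the telescoping identity displayed just before \eqref{eq:12} to shrink the initial candidate list down to $\Psi^0_{B'}$. First, by Theorem~\ref{thm:extr-pt-A_B0-char}, every $\mu\in\hat\Theta^0$ is a finitely supported atomic probability measure on $\mathbb T$ with support of cardinality $k$, where $N\leq k\leq N(N-1)+1$. Lemma~\ref{lem:finite-meas-gives-B-prod} then assigns to $\mu$, via $\psi_\mu = M^{-1}\circ f_\mu$, a unimodular scalar multiple of a Blaschke product with exactly $k$ zeros counting multiplicity; the normalization $\mu(\mathbb T)=1$ forces one of those zeros to sit at $0$.

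Next I would argue that $\psi_\mu$ itself must lie in $\mathcal{A}_{B'}^0$: the constraints $\mathcal R$ on $\mu$ were defined precisely so that they dualize the annihilator ${\mathcal A_{B'}^0}^{\bot}$ described in Lemma~\ref{lem:annih-A_B0} via the pairing in \eqref{eq:11}. Thus $\psi_\mu$ is a Blaschke product that lives in $\mathcal A_{B'}^0$ and whose degree equals the support size $k$ of $\mu$. Matching $k$ against the allowed degrees of Blaschke products inside $\mathcal A_{B'}^0$ forces $k=rN+s$ with $1\leq r\leq N-2$ and $0\leq s\leq r$, in which case $\psi_\mu$ is a unimodular multiple of $B^{r}z^{s}$; or $k=(N-1)N$, in which case $\psi_\mu$ is a multiple of $B^{N-1}$; or $k=N(N-1)+1$, giving $\psi_\mu$ of the form $cB^{N-1}m_\alpha$ with $\alpha\in\mathbb D$. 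Finally, identifying $\psi_\mu$ with $\overline{\psi_\mu(1)}\psi_\mu$, as permitted by the realization theorem, fixes the unimodular factor $c$ by requiring $1$ to be a support point of $\mu$, which is exactly the normalizations $cB(1)=1$ and $cB^{N-1}(1)m_\alpha(1)=1$ in the statement.

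For the assertion that $\Psi^0_{B'}$ itself is a collection of test functions, I would invoke the telescoping computation performed just before \eqref{eq:12}: for $r\geq s$, the kernel $1-z^{s}B^{r}(z)\overline{w^{s}B^{r}(w)}$ decomposes as
\begin{equation*}
  h_B(z)\bigl(1-B(z)\overline{B(w)}\bigr)h_B(w)^{*} +
  h_{zB}(z)\bigl(1-zB(z)\overline{wB(w)}\bigr)h_{zB}(w)^{*}.
\end{equation*}
Consequently, admissibility of a positive kernel against $\{B,zB\}$ automatically upgrades to admissibility against every $cB^{r}z^{s}$ with $1\leq r\leq N-2$, $0\leq s\leq r$, and therefore all those Blaschke products can be discarded from the candidate list, leaving precisely $\Psi^0_{B'}$. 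This delivers the Agler–Herglotz representation \eqref{eq:13} (in its $\mathcal A_{B'}^{0}$ form), from which the test-function property follows as in the discussion culminating in Corollary~\ref{cor:extremals-are-B-prods}.

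The principal obstacle is the bookkeeping inside the middle step: for each admissible support cardinality $k$ one must verify that the only Blaschke products of order $k$ lying in $\mathcal A_{B'}^0$ are the ones listed. This demands matching the zero multiplicities of $\psi_\mu$ at each $\alpha_j$ against the constraints \eqref{eq:6} and \eqref{eq:7} together with the additional linear constraints encoded by the vectors $b^{rs}$ in \eqref{eq:11}; the counts recorded in Lemma~\ref{lem:annih-A_B0} then pin down the degree sequence uniquely. Once that identification is complete, the telescoping identity before \eqref{eq:12} produces the reduction to $\Psi^0_{B'}$ essentially for free.
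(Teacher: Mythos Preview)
Your proposal is correct and follows the paper's own (informal) argument, which is laid out in the running text preceding the two corollaries rather than as a standalone proof: combine Theorem~\ref{thm:extr-pt-A_B0-char} and Lemma~\ref{lem:finite-meas-gives-B-prod} to see that each extreme $\psi_\mu$ is a Blaschke product in $\mathcal A_{B'}^0$ of bounded degree, use the structural description of $\mathcal A_{B'}^0$ from Theorem~\ref{thm:algebras-isom-isom} and the discussion around~\eqref{eq:12} to identify the possible forms, then apply the telescoping identity just before~\eqref{eq:12} to prune down to $\Psi_{B'}^0$. The step you flag as the principal obstacle is precisely the one the paper itself glosses over with ``It is evident from the form of elements of $\mathcal A_B^0$ given in~\eqref{eq:12}.''
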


In Corollary~\ref{cor:extremals-are-B-prods-A_B0}, ${B'}^{N-1}$ and
${B'}^{N-1}z$ can obviously be removed from the set of test functions
(or indeed, any countable subset of $\{{B'}^{N-1}m_\alpha\}$), but
then the set would no longer be compact.

Applying a M\"obius map if needed gives sets of test functions
$\Psi_B$ for $\mathcal A_B$ and $\Psi_B^0$ for $\mathcal A_B^0$.  The
sets have exactly the same form, but with $B'$ replaced by $B$.  This
is clear for $\mathcal A_B$.  Let us check it for $\mathcal A_B^0$.
Assume $\alpha$ is a zero of $B$, and let $B' = B\circ m_{-\alpha}$,
which has a zero at $0$.  Set $x' = B'$ and $y' = B'z'$, where $z' =
m_{-\alpha}$, and use these in defining $P'$, the polynomial whose
zero set is the variety for $B'$.  The algebra $\mathcal A_{B'}$ has a
set of test functions $\Psi_{B'}^0$ as described in
Corollary~\ref{cor:extremals-are-B-prods-A_B0}.  These are mapped
completely isometrically isomorphically onto a set of test functions
$\Psi_{\mathcal N(B')}$ for $A(\mathcal N_{B'})$.  Set $x = x'\circ
m_\alpha = B$ and $y = y'\circ m_\alpha = Bz$.  This maps
$\Psi_{\mathcal N_{B'}}$ to $\Psi_{\mathcal N_B}$, a set of test
functions for $A(\mathcal N_B)$.  Setting $z = y/x$ identifies these
with a set of test functions of $\mathcal A_B^0$.  Combining these
gives a map from $\Psi_{B'}^0$ to $\Psi_B^0$ which takes $B'$ to $B$,
$B'z'$ to $Bz$ and $\{B'^{N-1}m_\alpha: \alpha \in \hat{\mathbb D}\}$
to $\{B^{N-1}m_\alpha: \alpha \in \hat{\mathbb D}\}$.

There is then a corresponding collection of admissible kernels
$\mathcal K_{\Psi_{B}}$ (respectively, $\mathcal K_{\Psi_{B}^0}$), and
function algebra $H^\infty(\mathcal K_{\Psi_{B}})$ (respectively,
$H^\infty(\mathcal K_{\Psi_{B}^0})$).

\begin{theorem}
  \label{thm:algs_isom_isom}
  The algebras $H^\infty(\mathcal K_{\Psi_{B}})$ and $H^\infty_{B}$
  are isometrically isomorphic.  Likewise, the algebras
  $H^\infty(\mathcal K_{\Psi_{B}^0})$ and $H^{\infty\, 0}_{B}$ are
  isometrically isomorphic.
\end{theorem}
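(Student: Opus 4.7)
I would prove the two claimed isometric isomorphisms by checking both containments with matching norms; the argument for $H^\infty(\mathcal K_{\Psi_B^0}) \cong H^{\infty,\,0}_B$ is identical in structure to that for $H^\infty(\mathcal K_{\Psi_B}) \cong H^\infty_B$, so I focus on the latter. For the forward direction, given $\varphi \in H^\infty_B$ with $\|\varphi\|_\infty \leq 1$, the calculation culminating in~\eqref{eq:13} already produces a positive kernel $\Gamma : \mathbb D \times \mathbb D \to C(\Psi_B)^*$ witnessing
\begin{equation*}
  1 - \varphi(z)\varphi(w)^* = \Gamma(z,w)(1 - E_z E_w^*),
\end{equation*}
via the Agler-Herglotz representation (Theorem~\ref{thm:Agler-Herglotz-repn-AB}). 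When $\varphi(0) \neq 0$ the same recipe applied to the M\"obius-adjusted $\tilde\varphi$ with the modified $H_\mu$ displayed after~\eqref{eq:13} goes through unchanged. The realization theorem (Theorem~\ref{thm:realization-theorem}, $(2)\Rightarrow(1)$) then gives $\|\varphi\|_{H^\infty(\mathcal K_{\Psi_B})} \leq 1$.

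For the reverse containment, suppose $\varphi$ lies in the unit ball of $H^\infty(\mathcal K_{\Psi_B})$. Applying Theorem~\ref{thm:realization-theorem} in direction $(1)\Rightarrow(2)$ gives a positive kernel $\Gamma$ as above. A standard Kolmogorov factorization and lurking-isometry argument (as in the proof of the realization theorem in~\cite{MR2389623}) produces a Hilbert space $\mathcal E$, a unitary colligation on $\mathbb C \oplus \mathcal E$ with blocks $a, b, c, d$, and a holomorphic contraction-valued $\Phi : \mathbb D \to \mathcal B(\mathcal E)$ whose pointwise values are built from the test functions indexed by $\Theta$, such that
\begin{equation*}
  \varphi(z) = a + b\,\Phi(z)\bigl(I - d\,\Phi(z)\bigr)^{-1} c.
\end{equation*}
In particular $\varphi$ is holomorphic on $\mathbb D$ and bounded by $1$. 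By Corollary~\ref{cor:extremals-are-B-prods} every test function has the form $cBR_\mu$, so $\Phi$ has a zero of order at least $t_j$ at each zero $\alpha_j$ of $B$. Differentiating the Neumann series (which converges uniformly on compacta) then yields $\varphi(\alpha_j) = a$ for every $j$ and $\varphi^{(k)}(\alpha_j) = 0$ for $1 \leq k \leq t_j - 1$, which are exactly the interpolation conditions defining $\mathcal A_B$. Hence $\varphi$ belongs to the weak-$*$ closure $H^\infty_B$ of $\mathcal A_B$ with $\|\varphi\|_\infty \leq 1$, and the two norms on $\varphi$ match.

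The $\mathcal A_B^0$ case follows by the same template, substituting Theorem~\ref{thm:Agler-Herglotz-repn-AB0} for Theorem~\ref{thm:Agler-Herglotz-repn-AB} in the forward direction and the test functions $\Psi_B^0$ from Corollary~\ref{cor:extremals-are-B-prods-A_B0} in the reverse; the additional constraints distinguishing $\mathcal A_B^0$ from $\mathcal A_B$ transfer to $\varphi$ because every $\psi \in \Psi_B^0$ satisfies them, hence so does every polynomial truncation of the Neumann series, and hence so does its uniform-on-compacta limit. I expect the main technical point to be the colligation extraction itself---arranging $\Phi$ to be genuinely holomorphic with entries continuously parametrized by $\Theta$ (respectively $\Theta^0$)---together with, for $\Psi_B^0$, handling the compactification parameter $\alpha \in \hat{\mathbb D}$ (including $\alpha = \infty$) so that integration of the Agler-Herglotz kernel against $\nu$ is unambiguous. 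These are standard realization-theoretic bookkeeping issues but comprise the bulk of the technical work.
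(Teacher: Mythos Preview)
Your forward direction is exactly the paper's: both rely on the Agler--Herglotz decomposition culminating in~\eqref{eq:13} (with the M\"obius adjustment when $\varphi(0)\neq 0$).

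For the reverse direction the two arguments diverge. The paper's proof is much shorter: it simply observes that every test function lies in the unit ball of $H^\infty(\mathbb D)$, so the Szeg\H{o} kernel $k_s$ is admissible, and therefore any $\varphi$ in the unit ball of $H^\infty(\mathcal K_{\Psi_B})$ satisfies $((1-\varphi(x)\varphi(y)^*)k_s(x,y))\geq 0$, i.e.\ $\|\varphi\|_\infty\leq 1$. Your route instead invokes the full transfer-function form of the realization theorem from~\cite{MR2389623} to write $\varphi(z)=a+b\,\Phi(z)(I-d\Phi(z))^{-1}c$, and then reads off the interpolation constraints from the fact that every test function---hence $\Phi$ itself---is divisible by $B$. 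This is heavier machinery, but it buys you something the paper's one-line argument does not make explicit: it directly establishes the \emph{set} containment $H^\infty(\mathcal K_{\Psi_B})\subseteq H^\infty_B$ (respectively $H^{\infty,0}_B$), not merely the norm inequality $\|\varphi\|_\infty\leq \|\varphi\|_{H^\infty(\mathcal K_{\Psi_B})}$ and containment in $H^\infty$. The paper's Szeg\H{o}-kernel step, taken literally, only yields $H^\infty(\mathcal K_{\Psi_B})\subseteq H^\infty$; your colligation argument supplies the missing verification that $\varphi$ actually satisfies the constraints singling out $H^\infty_B$ inside $H^\infty$. So your approach is correct and in fact more self-contained on this point.
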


\begin{proof}
  The difficult part of the proof has been done above.  It is simply
  left to note that since any test function is in the unit ball of
  $H^\infty(\mathbb D)$, the Szeg\H{o} kernel $k_s$ is an admissible
  (for both $H^\infty(\mathcal K_{\Psi_B})$ and $H^\infty(\mathcal
  K_{\Psi_B^0})$).  Hence for any function $\varphi$ in the unit ball
  of either of these algebras, $((1-\varphi(x)\varphi(y)^*)k_s(y,x))$
  is a positive kernel, and so $\varphi$ is in the unit ball of
  $H^\infty_B$ (respectively, $H^{\infty\, 0}_B$).
\end{proof}

\section{Minimality of the set of test functions}
\label{sec:descr-test-funct}

At this point, Corollaries~\ref{cor:extremals-are-B-prods}
and~\ref{cor:extremals-are-B-prods-A_B0} give a fairly concrete
description of a set of test functions, especially for $\mathcal
A_{B'}^0$.  However, in dealing with $\mathcal A_{B'}$, it is more
useful for what follows to describe the test functions in terms of the
placement of the zeros rather than the support points for the measure
in the Herglotz representation.  Obviously, in writing any test
function as a Blaschke product, changing the order of the zeros does
not change the function.  There is also the point that the number of
zeros of a test function for $\mathcal A_{B'}$ is between $N$ and
$2N-1$, where $N$ is the number of zeros of $B'$, so not all test
functions will have the same number of zeros.

Introduce the following order on elements of the one point
compactification of the disk, $\hat{\mathbb D} = \mathbb D \cup
\infty$, which take these considerations into account: $\zeta_1
\preceq \zeta_2$ in $\hat{\mathbb D}$ if either $|\zeta_1| <
|\zeta_2|$ or $|\zeta_1| = |\zeta_2|$ and $\arg\zeta_1 \leq
\arg\zeta_2$.  The point $\infty$ is the maximal element of
$\hat{\mathbb D}$ with respect to this order, and $0$ the minimal
element.

This order can be used to describe the set of test functions for
$\mathcal A_{B'}$.  Let $\zbp = \{{\alpha'}_0 \leq \cdots \leq
{\alpha'}_{N-1}\}$ be the (ordered) zeros of $B'$.  So $B' =
m_{{\alpha'}_0} \cdots m_{{\alpha'}_{N-1}}$, where $m_{{\alpha'}_j}$
is the M\"obius map with zero ${\alpha'}_j$.  If as an abuse of
notation $m_\infty(z) = 1$, then a Blaschke product ${B'}_\alpha =
B'R$ with between $N$ and $2N-1$ zeros can be written as
\begin{equation*}
  {B'}_\alpha(z)= \prod_{j=0}^{2N-2} m_{\alpha_j},
\end{equation*}
where $\mathcal Z({B'}_\alpha) = \{0 = \alpha_0 \preceq \cdots \preceq
\alpha_{2N-2}\}$, the ordered zeros of ${B'}_\alpha$ in $\hat{\mathbb
  D}$, contains $\zbp$.

The set
\begin{equation*}
  \{ c {B'}_\alpha : \mathcal Z({B'}_\alpha) \text{ an ordered } 2N-1
  \text{ tuple containing the elements of } \zbp \text{ and } c =
  \overline{{B'}_\alpha(1)}\}
\end{equation*}
contains $\Psi_{B'}$.  On the other, by
Corollary~\ref{cor:extremals-are-B-prods}, any function in this set
has a corresponding Herglotz representation with between $N$ and
$2N-1$ support points for the measure, the measure satisfies the
constraints \eqref{eq:6} and \eqref{eq:7} since its zero set contains
the zeros of $B'$, and the constant $c$ is chosen so that one of the
support points is at $1$.  Therefore the opposite containment holds.

With this identification, view the measure in \eqref{eq:13} as being
on the set $\Psi_{B'}$ in place of the set of extremal measures
$\hat\Theta$, so that
\begin{equation}
  \label{eq:14}
  1 - \varphi(z)\varphi(w)^* = \int_{\Psi_{B'}} H_\psi(z)
  (1-\psi(z)\psi(w)^*) H_\psi(w)^* \, d\nu(\psi).
\end{equation}
There is an obvious version of this for the algebra $\mathcal
A_{B'}^0$, with $\Psi_{B'}$ replaced by $\Psi_{B'}^0$.

\begin{theorem}
  \label{thm:minimal_set_of_test_fns}
  The set $\Psi_{B'}$ is a minimal set of test functions for the
  algebra $\mathcal A_{B'}$.
\end{theorem}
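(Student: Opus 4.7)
The plan is a proof by contradiction. Suppose $\Psi_{B'}'\subsetneq \Psi_{B'}$ is a closed proper subset with the property that the realization theorem (Theorem~\ref{thm:realization-theorem}) still holds for every function in the unit ball of $\mathcal A_{B'}$, and pick some $\psi_0\in\Psi_{B'}\setminus\Psi_{B'}'$. By Corollary~\ref{cor:extremals-are-B-prods}, $\psi_0=\psi_{\mu_0}$ for an extreme point $\mu_0$ of the convex set $M^{+,1}_{B',\mathbb R}(\mathbb T)$. I will apply the supposed realization property to the unit-ball function $\psi_0$ itself and reduce it to an expression of the extreme measure $\mu_0$ as a nontrivial barycenter of other measures in $\hat\Theta$, which contradicts extremality.

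Since $\psi_0$ vanishes at $\alpha_0'=0$ and lies in the closed unit ball of $\mathcal A_{B'}$, the assumption produces a positive kernel $\Gamma:\mathbb D\times\mathbb D\to C(\Psi_{B'}')^*$ with
\begin{equation*}
  1-\psi_0(z)\overline{\psi_0(w)}=\Gamma(z,w)\bigl(1-E_zE_w^*\bigr).
\end{equation*}
The next step is to run the derivation of \eqref{eq:14} in reverse: writing $\psi_0=(f_{\mu_0}-1)/(f_{\mu_0}+1)$, using the identity
\begin{equation*}
  1-\psi_0(z)\overline{\psi_0(w)}=\frac{2\bigl(f_{\mu_0}(z)+\overline{f_{\mu_0}(w)}\bigr)}{(f_{\mu_0}(z)+1)\bigl(\overline{f_{\mu_0}(w)}+1\bigr)},
\end{equation*}
and the analogous identity for each $\psi=\psi_\mu$, one disintegrates $\Gamma$ against the probability measure $\Gamma(0,0)$ (which is a probability measure since $\psi_0(0)=0$) and absorbs the boundary factors $H_\mu=(f_\mu+1)/(f_{\mu_0}+1)$ into the Herglotz integrals. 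The output is an Agler-Herglotz representation
\begin{equation*}
  f_{\mu_0}(z)=\int_{\hat\Theta'}f_\mu(z)\,d\nu(\mu),
\end{equation*}
where $\hat\Theta'\subset\hat\Theta$ corresponds to $\Psi_{B'}'$ under the identification of Corollary~\ref{cor:extremals-are-B-prods} and $\nu$ is a probability measure on $\hat\Theta'$. By the uniqueness half of Lemma~\ref{lem:finite-meas-gives-B-prod} (uniqueness of the Herglotz representation), this forces the identity of measures
\begin{equation*}
  \mu_0=\int_{\hat\Theta'}\mu\,d\nu(\mu)
\end{equation*}
on $\mathbb T$.

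The set $M^{+,1}_{B',\mathbb R}(\mathbb T)$ is a weak-$*$ compact, convex, metrizable subset of $M_{B',\mathbb R}(\mathbb T)$, so the standard Choquet-type uniqueness property for extreme points applies: for any Borel partition $\hat\Theta'=A\sqcup A^c$ with $0<\nu(A)<1$, the barycenters of $\nu|_A/\nu(A)$ and $\nu|_{A^c}/\nu(A^c)$ each lie in $M^{+,1}_{B',\mathbb R}(\mathbb T)$ and their $\nu(A)$-convex combination is $\mu_0$; extremality forces each of them to equal $\mu_0$, and a Lebesgue-differentiation argument along shrinking neighborhoods of points of $\mathrm{supp}(\nu)$ then forces $\nu=\delta_{\mu_0}$. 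But $\mu_0\notin\hat\Theta'$, a contradiction.

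The main obstacle I expect is the disintegration step in the second paragraph: the realization theorem delivers a general $C(\Psi_{B'}')^*$-valued positive kernel $\Gamma$, whereas the Choquet argument demands a scalar probability measure $\nu$ on $\hat\Theta'$ of precisely the shape coming from \eqref{eq:14}. Extracting $\nu$ from $\Gamma$ requires a genuine Radon--Nikodym/disintegration against $\Gamma(0,0)$ together with careful handling of the boundary factors $H_\mu$, and it is here that the normalisation $\psi_0(0)=0$ (secured by the assumption $0\in\mathcal Z(B')$) and Lemma~\ref{lem:finite-meas-gives-B-prod} are doing the real work; once this disintegration is in place, the remaining extremality/uniqueness step is soft.
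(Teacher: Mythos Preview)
Your approach has a genuine gap at the step you yourself flag as the ``main obstacle,'' and the difficulty there is not merely technical: the reversal you propose does not go through.

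From the realization theorem you obtain a positive kernel $\Gamma:\mathbb D\times\mathbb D\to C(\Psi_{B'}')^*$.  Via Kolmogorov/Stinespring this can always be written in the form
\[
  1-\psi_0(z)\overline{\psi_0(w)} \;=\; \int_{\Psi_{B'}'} \sum_\ell h_{\psi,\ell}(z)\bigl(1-\psi(z)\overline{\psi(w)}\bigr)\overline{h_{\psi,\ell}(w)}\,d\nu(\psi),
\]
but the weights $h_{\psi,\ell}(z)$ depend on $z$.  Passing to the Herglotz side by multiplying through by $(f_{\mu_0}(z)+1)(\overline{f_{\mu_0}(w)}+1)/2$ and using the Cayley identities gives
\[
  f_{\mu_0}(z)+\overline{f_{\mu_0}(w)} \;=\; \int_{\Psi_{B'}'} \sum_\ell H_{\psi,\ell}(z)\bigl(f_{\mu}(z)+\overline{f_{\mu}(w)}\bigr)\overline{H_{\psi,\ell}(w)}\,d\nu(\psi),
\]
with $H_{\psi,\ell}(z)=h_{\psi,\ell}(z)\,(f_{\mu_0}(z)+1)/(f_\mu(z)+1)$.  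These factors still depend on $z$, so specialising to $z=w$ and writing the real parts as Poisson integrals yields
\[
  \int_{\mathbb T} P_z\,d\mu_0 \;=\; \int_{\Psi_{B'}'}\Bigl(\sum_\ell |H_{\psi,\ell}(z)|^2\Bigr)\int_{\mathbb T} P_z\,d\mu\;d\nu(\psi),
\]
an identity with $z$-dependent weights.  There is no way to strip these off and conclude $\mu_0=\int \mu\,d\nu$; ``disintegrating against $\Gamma(0,0)$'' only tells you that $\Gamma(0,0)$ is a probability measure, not that $\Gamma(z,w)$ is a scalar multiple of it.  Put differently, in the forward direction \eqref{eq:13}--\eqref{eq:14} produce one very particular $\Gamma$; the realization theorem hands you an arbitrary one, and nothing forces it to be of that special shape.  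So the barycentric identity $\mu_0=\int\mu\,d\nu$ that your extremality argument needs is simply not available.

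The paper takes an entirely different route.  It does \emph{not} use extremality of $\mu_0$ at all.  Instead it exploits the explicit description of the test functions as finite Blaschke products containing $\mathcal Z(B')$, picks $\psi_0$ with the maximal number ($2N-1$) of zeros and with the extra zeros generic, and then analyses the functions $h_{\psi,\ell}$ in the decomposition above by pole counting against Szeg\H{o}-type kernels $k^{(i)}_{\alpha}$.  This forces each $h_{\psi,\ell}$ into the very rigid form \eqref{eq:19}, with $g_{\psi,\ell}$ a constant, and then a coefficient comparison (the $z^{N-1}\overline{w}^{\,N-1}$ term in \eqref{eq:20}) produces a contradiction because any surviving $\psi$ must have strictly fewer zeros than $\psi_0$.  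The argument is analytic and combinatorial rather than convex-geometric, and it is precisely the $z$-dependence of the $h_{\psi,\ell}$---the thing that breaks your approach---that the pole-counting is designed to control.
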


\begin{proof}
  The set $\Psi_{B'}$ is norm closed in $H^\infty(\mathbb D)$.  Endow
  it with the relative topology, as described in~\cite{MR2389623}.
  Suppose that some proper closed subset $\mathcal C$ of $\Psi_{B'}$
  is a set of test functions for $\mathcal A_{B'}$.  Then $\Psi_{B'}
  \backslash \mathcal C$ is relatively open, and some $\psi_0 = c_0
  \prod_{j=0}^{2N-2} m_{\tilde{\alpha}_j}$ is in this set, where $\zbp
  = \{0 = \tilde{\alpha}_0,\tilde{\alpha}_{j_1} \dots ,
  \tilde{\alpha}_{j_{N-1}}\}$ are the zeros of $B'$.  Since $\Psi_{B'}
  \backslash \mathcal C$ is relatively open, assume without loss of
  generality that no ${\tilde\alpha}_j = \infty$ and that any zero
  which is not a zero for $B'$ is distinct from the zeros of $B'$ and
  all such zeros are distinct from each other.

  Let $\psi = c \prod_{k=0}^{2N-2} m_{\alpha_j}$ be in $\mathcal C$.
  For any $\alpha_k$ in $\mathcal Z(\psi)$ which occurs only once, set
  $k_{\alpha_j}(z) = 1/(1-\overline{\alpha_j}z)$, the Szeg\H{o}
  kernel, where $k_\infty := 0$.  More generally, if $\alpha\neq
  \infty$ is repeated, it is understood that the kernels
  $k^{(i)}_\alpha(z) =i! z^i / (1 - \overline{\alpha} z)^{i+1}$ are
  used instead, where $i$ runs from $0$ to one less than the
  multiplicity of the root, though this is generally not written
  explicitly to avoid notational complexity.  Define
  $k_{\tilde{\alpha}_{j}}$ in an identical manner.

  To prove the theorem, argue by contradiction.  To begin with, by
  the same reasoning to that found in the proof of Theorem~9
  of~\cite{MR2946923}, for $\psi\in\mathcal C$ and $1 \leq \ell \leq
  2N$, there exist functions $h_{\psi,\ell} \in L^2(\nu)$ such that
  \eqref{eq:14} can be written as
  \begin{equation}
    \label{eq:15}
    1 - \psi_0(z)\psi_0(w)^* = \int_{\mathcal C} \sum_{\ell = 1}^{2N}
    h_{\psi,\ell} (z) (1-\psi(z)\psi(w)^*) h_{\psi,\ell} (w)^* \,
    d\nu(\psi).
  \end{equation}
  Furthermore, for $n=0,\dots,2N-2$, there are constants $c_{nj}$ such
  that
  \begin{equation}
    \label{eq:16}
    h_{\psi,\ell} k_{\alpha_n} = \sum_{j=0}^{2N-2} c_{nj}\,
    k_{\tilde{\alpha}_{j}}.
  \end{equation}
  In particular, taking $n=0$ gives
  \begin{equation*}
    h_{\psi,\ell} = \sum_{j=0}^{2N-2} c_{0j}\,
    k_{\tilde{\alpha}_{j}}.
  \end{equation*}
  The kernels extend to meromorphic functions on the Riemann sphere,
  as then does $h_{\psi,\ell}$.
  Plug this last identity back into \eqref{eq:16}, for $n>1$, to get
  \begin{equation}
    \label{eq:17}
    k_{\alpha_n} \sum_{j=0}^{2N-2} c_{0j}\, k_{\tilde{\alpha}_{j}} =
    \sum_{j=0}^{2N-2} c_{nj}\, k_{\tilde{\alpha}_{j}}.
  \end{equation}

  Now use \eqref{eq:17} to eliminate some of the terms and to
  eventually solve for $h_{\psi,\ell}$.  If $\alpha_n \notin \mathcal
  Z(\psi_0)$, the left side of \eqref{eq:16} has a pole at
  $1/\overline{\alpha_n}$, while the right side does not.  In this
  case the only possibility is for $h_{\psi,\ell} = 0$.  Also, if
  $\alpha$ is a zero of multiplicity $t_j$ in $\psi$ and ${\tilde
    t}_j$ in $\psi_0$, then by \eqref{eq:16} with $k_{\alpha_n}$ equal
  to $k^{(t_j-1)}_\alpha(z)$, it follows from \eqref{eq:16} by
  counting pole multiplicities that $t_j \leq {\tilde t}_j$.

  Since the number of zeros of $\psi_0 = 2N-1$ and is greater than or
  equal to the number of zeros of $\psi$, if the two have the same
  number of zeros, they are equal (up to multiplicative unimodular
  constant), which cannot happen.  Hence $\psi$ must have fewer than
  $2N-1$ zeros.

  Consider $0 \neq \alpha_n \in \zbp$.  Then $\alpha_n =
  {\tilde\alpha}_j$ for some $j$.  If this is a zero of order $1$ for
  $\psi_0$, then the right side of \eqref{eq:17} has a pole of order
  at most $1$ at $1/\overline{{\tilde\alpha}_j}$, while the left side
  has a pole of order $2$ at this point if $c_{0j} \neq 0$.  Hence
  $c_{0j} =0$.

  More generally, suppose that $\psi_0$ has a zero of order $m>1$ at
  $\alpha_n \in \zbp$ (where now $\alpha_n$ may be $0$).  Let
  ${\tilde\alpha}_j=\dots ={\tilde\alpha}_{j+m-1}$ be the $m$ repeated
  zeros.  If $\alpha_n \neq 0$, each $k_{{\tilde\alpha}_{j+i}}$,
  $0\leq i \leq m-1$, has a pole of order between $1$ and $m$, and so
  no term on the right side of \eqref{eq:17} has a pole of order more
  than $m$ at $1/\overline{{\tilde\alpha}_j}$.  On the left side, if
  $k_{\alpha_n} = k^{(m-1)}_\alpha$ (which has a pole of order $m$)
  and if any of $c_{0j}$ to $c_{0,j+m-1}$ are nonzero, the
  corresponding term has a pole of order bigger than $m$.  Hence each
  of these coefficients must be zero.

  Things are slightly different when $\alpha_n = 0$.  In this case,
  $j=0$ and each $k_{{\tilde\alpha}_i}$, $1\leq i \leq m-1$, has a
  pole of order between $1$ and $m-1$ at $\infty$ (note that
  $k_{{\tilde\alpha}_0} = 1$).  So reasoning as before, no term on the
  right of \eqref{eq:17} has a pole of order bigger than $m-1$ at
  $\infty$, while $k_{\alpha_n}$ has a pole of order $m-1$ there, the
  left side has a pole of order at least $m$ at $\infty$ if any of
  $c_{0,1}$ to $c_{0,m-1}$ are nonzero.  So all of these coefficients
  must also be zero.

  Let $\alpha_n \in \mathcal Z(\psi)\backslash \zbp \subset \mathcal
  Z(\psi_0)\backslash \zbp$.  By assumption all such zeros are of
  order $1$.  Once again, a pole count with \eqref{eq:16} gives that
  the corresponding coefficient in $h_{\psi,\ell}$ is $0$.

  Combine these observations to conclude that
  \begin{equation*}
    h_{\psi,\ell} = c_{00} + \sum_{{\tilde\alpha}_j \in
      \mathcal Z(\psi_0)\backslash \mathcal Z(\psi)} c_{0j}
    k_{{\tilde\alpha}_j} = g_{\psi,\ell} \prod_{{\tilde\alpha}_j \in
      \mathcal Z(\psi_0)\backslash \mathcal Z(\psi)}
    k_{{\tilde\alpha}_j}.
  \end{equation*}
  Recall that the elements of $\mathcal Z(\psi_0)\backslash \zbp
  \supset \mathcal Z(\psi_0)\backslash \mathcal Z(\psi)$ are distinct
  and none are repeats of elements of $\zbp$.  Consequently,
  \begin{equation*}
    g_{\psi,\ell}(z) = c_{00} \prod_{{\tilde\alpha}_j \in \mathcal
      Z(\psi_0)\backslash \mathcal Z(\psi)}
    (1-\overline{{\tilde\alpha}_j}z) + \sum_{{\tilde\alpha}_j \in
      \mathcal Z(\psi_0)\backslash \mathcal Z(\psi)} c_{0j}
    \prod_{{\tilde\alpha}_n \in  \mathcal Z(\psi_0)\backslash 
      \mathcal Z(\psi),\, n\neq j} (1-\overline{{\tilde\alpha}_n}z)
  \end{equation*}
  is a polynomial of degree at most $N-1$.  So \eqref{eq:17} becomes
  \begin{equation}
    \label{eq:18}
    g_{\psi,\ell} k_{\alpha_n} \prod_{{\tilde\alpha}_j \in \mathcal
      Z(\psi_0)\backslash \mathcal Z(\psi)} k_{{\tilde\alpha}_j} =
    \sum_{j=0}^{2N-2} c_{nj}\, k_{\tilde{\alpha}_{j}}.
  \end{equation}

  Since by assumption $B'$ has a zero at $0$ of degree $t_0 \geq 1$,
  the right side of \eqref{eq:18} has a pole at $\infty$ of order at
  most $t_0 -1$ corresponding to $k_{\alpha_n} = k_0^{(t_0-1)}$.  On
  the other hand, with this choice of $k_{\alpha_n}$, the left side of
  \eqref{eq:18} has a pole at $\infty$ of order $\deg g_{\psi,\ell} +
  t_0-1$.  Hence $g_{\psi,\ell}$ is a constant, and so
  \begin{equation}
    \label{eq:19}
    h_{\psi,\ell} = g_{\psi,\ell} \frac{\prod_{{\tilde\alpha}_j
        \in \mathcal Z(\psi_0)\backslash \zbp}
      k_{{\tilde\alpha}_j}}{\prod_{\alpha_n \in \mathcal
        Z(\psi)\backslash \zbp} k_{\alpha_n}}, \qquad g_{\psi,\ell}
    \in \mathbb C.
  \end{equation}

  Substitute the formula for $h_{\psi,\ell}$ from \eqref{eq:19} into
  \eqref{eq:15}, multiply by $\prod_{{\tilde\alpha}_j \in \mathcal
    Z(\psi_0) \backslash \zbp} (1 - \overline{{\tilde\alpha}_j}z)(1 -
  \overline{{\tilde\alpha}_j}z)^*$ and use the identities in
  \eqref{eq:1} to get
  \begin{equation}
    \label{eq:20}
    \begin{split}
      &\sum_{m,n = 0}^{N-1} \left[z^m{\overline{w}}^n
        S_m(\overline{\tilde\alpha})S_n(\overline{\tilde\alpha})^* -
        B'(z)B'(w)^* z^{N-1-m}{\overline{w}}^{N-1-n} S_m(\tilde\alpha)
        S_n(\tilde\alpha)^*\right] \\
      = & \int_{\mathcal C} \sum_\ell \sum_{m,n = 0}^{\deg\psi}
      |g_{\psi,\ell}|^2 \left(z^m{\overline{w}}^n
        S_m(\overline{\alpha_\psi}) S_n(\overline{\alpha_\psi})^* -
        B'(z)B'(w)^* z^{N-1-m}{\overline{w}}^{N-1-n} S_m(\alpha_\psi)
        S_n(\alpha_\psi)^* \right) \, d\nu(\psi).
    \end{split}
  \end{equation}
  As a shorthand notation, $\tilde\alpha$ stands for $\mathcal
  Z(\psi_0) \backslash \zbp$ and $\alpha_\psi$ for $\mathcal Z(\psi)
  \backslash \zbp$.  Since there are only finitely many choices of
  $\psi$ with $\mathcal Z(\psi) \subset \mathcal Z(\psi_0)$, the
  measure $\nu$ is finitely supported.

  Consider the coefficient of $z^{N-1}{\overline{w}}^{N-1}$
  in~\eqref{eq:20}.  On the left side, it is equal to
  $|S_{N-1}(\overline{\tilde\alpha})|^2 = \prod_{{\tilde\alpha}_j \in
    \mathcal Z(\psi_0)\backslash \zbp} |\alpha_j|^2 \neq 0$.  On the
  other hand, since $\deg\psi < N-1$, the coefficient on the right
  side must be $0$, giving a contradiction.  Thus $\mathcal C$ cannot
  have been a set of test functions for the algebra $\mathcal A_{B'}$,
  and so $\Psi_{B'}$ is a minimal set of test functions.
\end{proof}

The minimality of the set of test functions for $\mathcal A_B^0$
follows, with some variation, the same sort of reasoning as in the
proof of the last theorem.

\begin{theorem}
  \label{thm:minimal_set_of_test_fns-A_B0}
  The set $\Psi_{B'}^0$ is a minimal set of test functions for the
  algebra $\mathcal A_{B'}^0$.
\end{theorem}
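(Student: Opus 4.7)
The plan is to mirror the structure of the proof of Theorem~\ref{thm:minimal_set_of_test_fns}, making the necessary adjustments to accommodate the form of the test functions in $\Psi_{B'}^0$. Assume by way of contradiction that there is a proper closed subset $\mathcal C \subsetneq \Psi_{B'}^0$ that is also a set of test functions for $\mathcal A_{B'}^0$. Since the complement is relatively open and has members of the form $cB^{N-1} m_\alpha$ forming a dense continuous family in a neighborhood of almost every point, one may choose $\psi_0 \in \Psi_{B'}^0 \setminus \mathcal C$ of the form $\psi_0 = cB^{N-1} m_{\alpha_0}$ with $\alpha_0 \in \mathbb D$ finite and generic, in particular distinct from every zero of $B'$. (The cases $\psi_0 = cB$ or $\psi_0 = cBz$, should those be missing from $\mathcal C$, are handled by an analogous but simpler variant since they have fewer zeros than a generic $B^{N-1}m_\alpha$.)

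Applying the realization theorem to $\mathcal C$ using the analogue of \eqref{eq:14}, obtain functions $h_{\psi,\ell} \in L^2(\nu)$ and a positive measure $\nu$ on $\mathcal C$ such that
\begin{equation*}
  1 - \psi_0(z)\psi_0(w)^* = \int_{\mathcal C} \sum_\ell h_{\psi,\ell}(z) (1-\psi(z)\psi(w)^*) h_{\psi,\ell}(w)^* \, d\nu(\psi).
\end{equation*}
Testing against Szeg\H{o} kernels as in the proof of Theorem~\ref{thm:minimal_set_of_test_fns} (compare with \eqref{eq:16} and \eqref{eq:17}) produces the identity $k_{\alpha_n}(z) h_{\psi,\ell}(z) = \sum_{j} c_{nj} k_{\tilde\alpha_j}(z)$, where $\{\tilde\alpha_j\}$ enumerates the zeros of $\psi_0$ (i.e., each zero of $B'$ with multiplicity $N-1$ together with $\alpha_0$) and $\{\alpha_n\}$ those of $\psi$. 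The kernels extend meromorphically to the Riemann sphere, allowing a pole-count argument: if $\alpha_n \notin \mathcal Z(\psi_0)$ then $h_{\psi,\ell} \equiv 0$; at each $\tilde\alpha_j \in \mathcal Z(B')$, which appears with multiplicity $N-1$ in $\psi_0$, comparing pole orders at $1/\overline{\tilde\alpha_j}$ and at $\infty$ (when $\tilde\alpha_j = 0$) forces all but the lowest coefficient $c_{00}$ in the expansion of $h_{\psi,\ell}$ at these points to vanish. The only surviving contributions correspond to the zeros in $\mathcal Z(\psi_0) \setminus \mathcal Z(B'^{N-1})$, which consist of the single point $\alpha_0$ for $\psi_0$ and of at most one extra point for each $\psi \in \mathcal C$ of the form $B^{N-1} m_\alpha$.

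This analysis forces
\begin{equation*}
  h_{\psi,\ell}(z) = g_{\psi,\ell} \, \frac{\prod_{\tilde\alpha_j \in \mathcal Z(\psi_0)\setminus \mathcal Z(B'^{N-1})} k_{\tilde\alpha_j}(z)}{\prod_{\alpha_n \in \mathcal Z(\psi)\setminus \mathcal Z(B'^{N-1})} k_{\alpha_n}(z)}
\end{equation*}
with $g_{\psi,\ell} \in \mathbb C$ (the pole order at $\infty$ coming from $k_0^{(t_0-1)}$ again bounds the numerator's degree). In particular, only those $\psi \in \mathcal C$ with $\mathcal Z(\psi) \subset \mathcal Z(\psi_0)$ contribute to the integral, and these form a finite set (the extra zero must equal either $\alpha_0$ or a zero of $B'$), so $\nu$ reduces to a finite atomic measure. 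Substitute this back into the realization identity, clear denominators by multiplying through by $\prod_{\tilde\alpha_j \in \mathcal Z(\psi_0)\setminus \mathcal Z(B'^{N-1})}(1-\overline{\tilde\alpha_j}z)(1-\overline{\tilde\alpha_j}w)^*$, and expand using the identities in \eqref{eq:1} to obtain a polynomial identity in $z$ and $\overline w$ analogous to \eqref{eq:20}.

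Finally, compare the coefficient of the top-order term. The left side contains a contribution involving $|\alpha_0|^2$ coming from the extra zero of $\psi_0$ not shared with $B'^{N-1}$, which is nonzero by the genericity of $\alpha_0$; but on the right, every $\psi \in \operatorname{supp}\nu$ either is $B$, $Bz$, or $B^{N-1}m_\beta$ with $\beta \neq \alpha_0$, and in each case the polynomial identities match only lower-degree contributions in that coefficient, yielding zero. This contradiction shows that no such $\mathcal C$ can exist, establishing minimality. The main technical obstacle is the bookkeeping for the high-multiplicity zeros of $\psi_0$ coming from $B'^{N-1}$: the pole-counting step must be iterated through multiplicities $1, 2, \dots, N-1$ at each zero of $B'$ to eliminate all the intermediate coefficients, which is where the constraint encoded in Lemma~\ref{lem:annih-A_B0} that $\mathcal A_{B'}^0 \subseteq \mathcal A_{{B'}^{N-1}}$ plays its essential role.
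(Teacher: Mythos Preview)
Your proposal has a genuine gap in the pole-counting step. You assert that, after eliminating coefficients via pole orders, $h_{\psi,\ell}$ reduces to a constant multiple of a ratio of Szeg\H{o} kernels (that is, $g_{\psi,\ell}\in\mathbb C$), exactly as in Theorem~\ref{thm:minimal_set_of_test_fns}. But the multiplicity structure here is fundamentally different: in $\psi_0 = c{B'}^{N-1}m_{\alpha_0}$ each zero $\alpha_j$ of $B'$ appears with multiplicity $(N-1)t_j$, whereas in $\psi = B'$ or $zB'$ it appears only with multiplicity $t_j$ (or $t_j+1$ at $0$). Pole counting with $k_{\alpha_n}=k_{\alpha_j}^{(t_j-1)}$ only kills the top $t_j$ coefficients of $h_{\psi,\ell}$ at $1/\overline{\alpha_j}$, leaving $(N-2)t_j$ of them intact; the analogous argument at $\infty$ likewise fails to reduce the degree to zero. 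The paper's proof accordingly retains $g_{\psi,\ell}$ as a polynomial of degree up to $N(N-2)-1$ (respectively $N(N-2)-2$ for $\psi=zB'$), and the contradiction is obtained not from a single $|\alpha_0|^2$ term but from the coefficient of $B'(z)B'(w)^* z^{N(N-2)+1}\overline{w}^{\,N(N-2)+1}$ in the resulting polynomial identity \eqref{eq:25}: this coefficient equals $1$ on the left and $0$ on the right because the degrees of the $g_{\psi,\ell}$ are strictly too small.

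A second, smaller gap: dismissing the cases $\psi_0=B'$ and $\psi_0=zB'$ as ``simpler variants'' is not adequate. The $zB'$ case needs its own argument (the paper shows that $g_{B',\ell}B'$ would have to be a polynomial, impossible since $\deg g_{B',\ell}<\deg B'$), and these cases cannot be absorbed into the generic $\psi_0={B'}^{N-1}m_\alpha$ argument since $\{B'\}$, $\{zB'\}$, and $\{{B'}^{N-1}m_\alpha\}$ are separate components of $\Psi_{B'}^0$.
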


\begin{proof}
  By Theorem~\ref{thm:algebras-isom-isom}, the case where $B'$ has
  $N=2$ zeros is covered by the last theorem, so assume from now on
  the $N > 2$.  The elements ${B'}$, $z{B'}$ and the set $\{{B'}^{N-1}
  m_\alpha : \alpha\in \hat{\mathbb D}\}$ form distinct components of
  $\Psi_{B'}^0$.  Hence removing an open subset of $\Psi_{B'}^0$
  amounts to removing ${B'}$, $z{B'}$ or an open subset of
  $\{{B'}^{N-1} m_\alpha\}$, or a union of such sets.  If the set
  removed contains a subset of $\{{B'}^{N-1} m_\alpha\}$, it is
  assumed that $\psi_0 = {B'}^{N-1} m_\alpha$ has been chosen with
  $\alpha \notin \zbp$.

  As before,
  \begin{equation}
    \label{eq:21}
    1 - \psi_0(z)\psi_0(w)^* = \int_{\mathcal C} \sum_{\ell = 1}^{L}
    h_{\psi,\ell} (z) (1-\psi(z)\psi(w)^*) h_{\psi,\ell} (w)^* \,
    d\nu(\psi),
  \end{equation}
  where now $L = N(N-1)+2$.  Denote zeros of $\psi_0$ by
  ${\tilde\alpha}_j$ and those of $\psi$ by $\alpha_j$.  For
  $n=0,\dots,N(N-1)$, there are constants $c_{nj}$ such that
  \begin{equation}
    \label{eq:22}
    h_{\psi,\ell} k_{\alpha_n} = \sum_{j=0}^{N(N-1)} c_{nj}\,
    k_{\tilde{\alpha}_{j}}.
  \end{equation}
  If $\psi_0 = B'$ or $zB'$, it is understood that the remaining
  $\alpha_j$s are $\infty$, and for these, $k_{\alpha_j} = 0$.  When
  $n=0$, \eqref{eq:22} gives
  \begin{equation}
    \label{eq:23}
    h_{\psi,\ell} = \sum_{j=0}^{N(N-1)} c_{0j}\,
    k_{\tilde{\alpha}_{j}}.
  \end{equation}
  Once again, all kernels and $h_{\psi,\ell}$ extend meromorphically
  to the Riemann sphere.  Substituting this back into \eqref{eq:21},
  for $n>1$,
  \begin{equation*}
    k_{\alpha_n} \sum_{j=0}^{N(N-1)} c_{0j}\, k_{\tilde{\alpha}_{j}} =
    \sum_{j=0}^{N(N-1)} c_{nj}\, k_{\tilde{\alpha}_{j}}.
  \end{equation*}

  Arguing as in the proof of
  Theorem~\ref{thm:minimal_set_of_test_fns}, $\mathcal Z(\psi)$ is a
  proper subset of $\mathcal Z(\psi_0)$.  In particular, $\psi_0 = B'$
  is immediately ruled out.  If $\psi_0 = zB'$,
  \begin{equation*}
    1-zB'(z)B'(w)^*w^* = \sum_\ell h_{B',\ell}(z)(1-{B'}(z){B'}(w)^*)
    h_{B',\ell}(w)^*.
  \end{equation*}
  The set $\zbp$ consists of $\{\alpha_0,\dots\alpha_m\}$, where
  $\alpha_j$ has multiplicity $t_j$.  Multiplying through by
  $\prod_j(1-\overline{\alpha_j}z)^{t_j}$ in \eqref{eq:23},
  \begin{equation*}
    g_{B',\ell}(z) = h_{B',\ell}(z)
    \prod_j(1-\overline{\alpha_j}z)^{t_j}
  \end{equation*}
  is a polynomial of degree less than $\deg B'$, and
  \begin{equation*}
    \begin{split}
      &\prod_j(1-\overline{\alpha_j}z)^{t_j}
      \prod_j(1-\overline{\alpha_j}w)^{*\,t_j} - z\prod_j (\alpha_j -
      z)^{t_j} (\alpha_j - w)^{*\,t_j}w^* -
      g_{B',\ell}(z)g_{B',\ell}(w)^* \\
      = & g_{B',\ell}(z){B'}(z){B'}(w)^*g_{B',\ell}(w)^*.
    \end{split}
  \end{equation*}
  If $g_{B',\ell} \neq 0$, there is a $w \in \mathbb D$ such that
  ${B'}(w)^*g_{B',\ell}(w)^* \neq 0$.  Thus $g_{B',\ell}(z){B'}(z)$ is
  a polynomial, and so the zeros of $g_{B',\ell}$ must cancel the
  poles of ${B'}(z)$.  But since $\deg g_{B',\ell} < \deg B'$, which is
  impossible.  Therefore $\psi_0 \neq zB'$.

  Now assume that $\psi_0 = {B'}^{N-1} m_\alpha$, where $0 \neq \alpha
  \notin \zbp$.  Since $|\mathcal Z(\psi)| < |\mathcal Z(\psi_0)|$,
  $\psi = {B'}$, $z{B'}$ or ${B'}^{N-1}$.  Since
  \begin{equation*}
    \begin{split}
      & 1-{B'}^{N-1}(z){B'}^{* N-1}(w) \\
      =\, & (1-{B'}(z){B'}(w)^*) +
      {B'}(z)(1-{B'}(z){B'}(w)^*){B'}(w)^* + \cdots +
      {B'}(z)^{N-2}(1-{B'}(z){B'}(w)^*){B'}(w)^{*\,N-2} \\
      =\, & h(z)(1-{B'}(z){B'}(w)^*)h(w)^*,
    \end{split}
  \end{equation*}
  $h(z) = \begin{pmatrix} 1 & B'(z) & \cdots & B'(z)^{N-2}
  \end{pmatrix}$, without loss of generality, $\psi \in\{B',zB'\}$.
  Hence
  \begin{equation}
    \label{eq:24}
    \begin{split}
      & 1 - {B'}^{N-1}(z) m_\alpha(z)m_\alpha(w)^*{B'}^{N-1}(w^*) \\
      = & \sum_\ell \left[h_{{B'},\ell}(z)(1-{B'}(z){B'}(w)^*)
        h_{{B'},\ell}(w)^* + h_{z{B'},\ell}(z)(1-z{B'}(z){B'}(w)^*w^*)
        h_{z{B'},\ell}(w)^*\right].
    \end{split}
  \end{equation}
  where $h_{{B'},\ell}$ and $h_{z{B'},\ell}$ are as in \eqref{eq:22}.

  As before, write the distinct elements of $\zbp$ as
  $\{0=\alpha_0,\dots\alpha_m\}$, where $\alpha_j$ has multiplicity
  $t_j$.  Then $\mathcal Z(\psi_0)$ consists of
  $\{\alpha_0,\dots\alpha_m,\alpha\}$, where $\alpha_j$ has
  multiplicity $(N-1)t_j$ and $\alpha$ has multiplicity~$1$.  By pole
  counting, the coefficient of the kernel
  $k_{\alpha_j}^{((N-2)t_j+\ell -1)}$, $1 \leq \ell \leq t_j$ in
  \eqref{eq:23} is $0$.  Moreover, when $\alpha_j = 0$ and $\psi =
  z{B'}$, the coefficient of $k_{0}^{((N-2)t_0 - 1)}$ also equals $0$.
  Therefore,
  \begin{equation*}
    h_{\psi,\ell} = g_{\psi,\ell} (1-\overline{\alpha}z)
    \prod_j(1-\overline{\alpha_j}z)^{-(N-2)t_j},
  \end{equation*}
  where $g_{\psi,\ell} = \sum_s g_{\psi,\ell,s} z^s = c
  \prod_j(z-\beta_j)$ is a polynomial of degree at most $N(N-2)-1$
  when $\psi = {B'}$ and $N(N-2)-2$ when $\psi = z{B'}$.  Write $Z$
  for the set of zeros of ${B'}^{N-2}m_\alpha$, counting
  multiplicities, and $\beta_{{B'},\ell}$, $\beta_{z{B'},\ell}$ for
  the set of roots of $g_{{B'},\ell}$ and $g_{z{B'},\ell}$.
  Multiplying through in equation \eqref{eq:24} by
  $(1-\overline{\alpha}z)\prod_j(1-\overline{\alpha_j}z)^{(N-2)t_j}
  \prod_j(1-\overline{\alpha_j}w)^{*\,(N-2)t_j}(1-\overline{\alpha}w)^*$
  and using \eqref{eq:1},
  \begin{equation}
    \label{eq:25}
    \begin{split}
      &\sum_{m,n = 0}^{N(N-2)+1} \left[z^m\,{\overline{w}}^n
        S_m(\overline{Z})S_n(\overline{Z})^* - B'(z)B'(w)^*
        z^{N(N-2)+1-m}\,{\overline{w}}^{N(N-2)+1-n} S_m(Z)
        S_n(Z)^*\right] \\
      = & \sum_\ell \sum_{m,n = 0}^{N(N-2)} |g_{{B'},\ell,0}|^2
      \left[\vphantom{{\overline{w}}^{N(N-2)-n}} z^m\,{\overline{w}}^n
        S_{N(N-2)-m}(\beta_{{B'},\ell})
        S_{N(N-2)-n}(\beta_{{B'},\ell})
      \right. \\
      & \left. \qquad - B'(z)B'(w)^*
        z^{N(N-2)-m}\,{\overline{w}}^{N(N-2)-n} S_m(\beta_{{B'},\ell})
        S_n(\beta_{{B'},\ell})
      \right] \\
      & + \sum_\ell \sum_{m,n = 0}^{N(N-2)-1} |g_{z{B'},\ell,0}|^2
      \left[\vphantom{{\overline{w}}^{N(N-2)-n}} z^m\,{\overline{w}}^n
        S_{N(N-2)-1-m}(\beta_{z{B'},\ell})
        S_{N(N-2)-1-n}(\beta_{z{B'},\ell})
      \right. \\
      & \left. \quad - B'(z)B'(w)^*
        z^{N(N-2)-m}\,{\overline{w}}^{N(N-2)-n}
        S_m(\beta_{z{B'},\ell}) S_n(\beta_{z{B'},\ell})\right].
    \end{split}
  \end{equation}

  On the left, the coefficient of
  $B'(z)B'(w)^*z^{N(N-2)+1}\,{\overline{w}}^{N(N-2)+1}$ in
  \eqref{eq:25} is $S_0(Z)S_0(Z)^* = 1$, while on the right, the
  coefficient is $0$, yielding a contradiction.  Hence $\Psi_{B'}^0$
  is a minimal set of test functions.
\end{proof}

Recalling the discussion preceding Theorem~\ref{thm:algs_isom_isom},
the following corollary is seen to hold.

\begin{corollary}
  \label{cor:minimal-set-of-test-fns}
  The set
  \begin{equation*}
    \Psi_B = \{ c B_\alpha : \alpha \text{ an ordered } 2N-1
    \text{ tuple containing the elements of } \mathcal Z(B) \text{ and
    } c = \overline{B_\alpha(1)}\}
  \end{equation*}
  is a minimal set of test functions for $\mathcal A_B$, and the set
  \begin{equation*}
    \Psi_B^0 = \{B\}\cup \{z B\} \cup \{ c B m_\alpha :
    \alpha\in\hat{\mathbb D} \text{ and } c =
    \overline{B(1)m_\alpha(1)}\},
  \end{equation*}
  is a minimal set of test functions for $\mathcal A_B^0$.
\end{corollary}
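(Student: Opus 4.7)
The plan is to reduce to the two preceding minimality theorems via the Möbius transfer argument developed in the paragraphs between Theorem~\ref{thm:extr-pt-A_B0-char} and Theorem~\ref{thm:algs_isom_isom}. Fix any zero $\alpha_0$ of $B$ and set $B' = B \circ m_{-\alpha_0}$, so that $0 \in \zbp$ and Theorems~\ref{thm:minimal_set_of_test_fns} and \ref{thm:minimal_set_of_test_fns-A_B0} apply directly to $\mathcal{A}_{B'}$ and $\mathcal{A}_{B'}^0$.

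First I would recall that composition with $m_{\pm\alpha_0}$ sets up an isometric bijection between $H^\infty_B$ and $H^\infty_{B'}$ (and between the $0$-superscripted algebras) which intertwines the realization theorem: a family $\Psi_{B'}$ is a (respectively, minimal) family of test functions for $\mathcal{A}_{B'}$ if and only if $\Psi = \{\psi' \circ m_{\alpha_0} : \psi' \in \Psi_{B'}\}$ is a (respectively, minimal) family of test functions for $\mathcal{A}_B$. That the property of being a family of test functions transfers was already established in the preamble to Theorem~\ref{thm:algs_isom_isom} via the identity $1 - \varphi(x)\varphi(y)^* = \Gamma(x,y)(1-E_xE_y^*)$ with $\Gamma(x,y) = \Gamma'(m_{\alpha_0}(x),m_{\alpha_0}(y))$. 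Minimality transfers by the same token, since a proper closed subset $\mathcal{C} \subset \Psi$ would pull forward to a proper closed subset $\mathcal{C}' \subset \Psi_{B'}$ which would then realize every unit-ball function of $\mathcal{A}_{B'}$, contradicting the relevant minimality theorem.

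Second, I would identify the image of the pullback. Any $\psi' \in \Psi_{B'}$ has the form $c' \prod_{j=0}^{2N-2} m_{\alpha'_j}$, where $\{\alpha'_j\}$ is an ordered $(2N-1)$-tuple containing $\zbp$. Since $m_{\alpha_0}$ is an automorphism of $\mathbb D$ sending $\zbp$ bijectively to $\zb$, and since a composition $m_{\alpha'_j} \circ m_{\alpha_0}$ is a unimodular constant times $m_{m_{\alpha_0}(\alpha'_j)}$, the image is, up to an overall unimodular constant, a Blaschke product whose ordered zero set is a $(2N-1)$-tuple containing $\zb$. Absorbing the constant by the normalization $c = \overline{B_\alpha(1)}$ so that $\psi(1) = 1$ recovers exactly the description of $\Psi_B$ given in the statement. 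The same argument, now applied with Theorem~\ref{thm:minimal_set_of_test_fns-A_B0}, sends the components $B'$, $z'B'$, and $\{{B'}^{N-1}m_\alpha : \alpha \in \hat{\mathbb D}\}$ of $\Psi_{B'}^0$ to $B$, $zB$, and $\{B^{N-1}m_\beta : \beta \in \hat{\mathbb D}\}$ respectively, yielding $\Psi_B^0$ as stated (note that $z' = m_{-\alpha_0}$, so $z'B'$ composed with $m_{\alpha_0}$ is a unimodular constant times $zB$, and the family $\{m_\alpha\}$ is permuted by composition with a Möbius map).

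I do not foresee a real obstacle: the technical content lies in the two minimality theorems already proved, and the rest is the bookkeeping of tracking how the Möbius pullback acts on Blaschke products and on their normalizations. The only point requiring a little care is checking that the unimodular constant adjustment $c = \overline{B_\alpha(1)}$ (respectively $c = \overline{B(1)m_\alpha(1)}$) is exactly what arises when one renormalizes $\psi' \circ m_{\alpha_0}$ so that $1$ sits in the support of the associated extremal measure, which is immediate from the definition.
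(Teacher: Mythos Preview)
Your proposal is correct and follows exactly the same route as the paper: the paper's ``proof'' of this corollary is simply the sentence ``Recalling the discussion preceding Theorem~\ref{thm:algs_isom_isom}, the following corollary is seen to hold,'' and your write-up is a careful unpacking of precisely that M\"obius-transfer argument, together with the observation that minimality is preserved because proper closed subsets pull back to proper closed subsets. (Incidentally, you have correctly written $B^{N-1}m_\beta$ in the $\mathcal{A}_B^0$ case, matching Corollary~\ref{cor:extremals-are-B-prods-A_B0} and the discussion after it; the displayed statement's $Bm_\alpha$ appears to be a typographical slip in the paper.)
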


This leads us to a refinement of the realization theorem,
Theorem~\ref{thm:realization-theorem}.

\begin{theorem}
  \label{thm:refined-realization}
  Let $\Psi = \Psi_{B}$ (respectively, $\Psi_{B}^0$) be the minimal
  set of test functions for the algebra $\mathcal A_{B}$
  (respectively, $\mathcal A_{B}^0$).  For $\varphi: \mathbb D\to
  \mathbb C$, the following are equivalent:
  \begin{enumerate}
  \item $\varphi \in \mathcal A_{B}$ (respectively, $\mathcal
    A_{B}^0$) with $\|\varphi\| \leq 1$;
  \item There is a positive measure $\mu$ from $\mathbb D\times
    \mathbb D$ to $C(\Psi_{B})^*$ (respectively, $C(\Psi_{B}^0)^*$)
    and $H_\psi \in H^2(\mathbb D)$ such that for all $z,w\in \mathbb
    D$,
    \begin{equation*}
      1 - \varphi(z)\varphi(w)^* = \int_{\Psi} 
      H_\psi(z) (1-\psi(z)\psi(w)^*) H_\psi(w)^* \, d\mu_{z,w}(\psi).
    \end{equation*}
  \end{enumerate}
\end{theorem}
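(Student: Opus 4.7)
The plan is to split into the two implications, leaning heavily on material already assembled in the preceding sections. The implication $(2)\Rightarrow(1)$ is essentially free: given the integral representation, for any admissible kernel $k\in\mathcal{K}_{\Psi_B}$ (respectively $\mathcal{K}_{\Psi_B^0}$), each integrand $H_\psi(z)(1-\psi(z)\psi(w)^*)H_\psi(w)^*k(z,w)$ is a positive kernel, and positivity is preserved under the weak integral. Thus $(1-\varphi(z)\varphi(w)^*)k(z,w)\geq 0$, placing $\varphi$ in the unit ball of $H^\infty(\mathcal{K}_{\Psi_B})$, which by Theorem~\ref{thm:algs_isom_isom} equals $H^\infty_B$ (respectively $H^{\infty,0}_B$).

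For $(1)\Rightarrow(2)$, I would first reduce to the normalization $\varphi(0)=0$ by the Möbius change of variables described in the paragraph following \eqref{eq:14}: replace $\varphi$ by $\tilde\varphi=(\varphi-c)/(1-\overline c\varphi)$ with $c=\varphi(0)$, obtain the representation for $\tilde\varphi$, and then absorb the prefactor
\begin{equation*}
\frac{\sqrt{1-|c|^2}\,(1-\overline{c}\varphi)}{\tilde f+1}
\end{equation*}
into the $H_\psi$ factors. With $\varphi(0)=0$ in hand, set $f=M\circ\varphi$ so that $f$ is analytic on $\mathbb{D}$, has positive real part, and satisfies~\eqref{eq:8} (respectively, the constraints $\mathcal R$ defining $\mathcal{A}^0_B$). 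Apply the Agler--Herglotz representation, Theorem~\ref{thm:Agler-Herglotz-repn-AB} (respectively Theorem~\ref{thm:Agler-Herglotz-repn-AB0}), to write $f(z)=\int_{\hat\Theta}f_\mu(z)\,d\nu(\mu)$ for a probability measure $\nu$ on the extremals.

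Next, the computation already performed in \eqref{eq:13}--\eqref{eq:14} converts this into
\begin{equation*}
1-\varphi(z)\varphi(w)^* = \int_{\hat\Theta} H_\mu(z)(1-\psi_\mu(z)\psi_\mu(w)^*)H_\mu(w)^*\,d\nu(\mu),
\end{equation*}
with $H_\mu=(f_\mu+1)/(f+1)$. Using the Cayley bijection $\mu\leftrightarrow \psi_\mu=M^{-1}\circ f_\mu$ from Lemma~\ref{lem:finite-meas-gives-B-prod}, together with the identification of Herglotz measures with their associated Blaschke test functions worked out just before \eqref{eq:14}, push the measure $\nu$ forward to a measure-valued kernel $\mu_{z,w}$ on $\Psi_B$ (respectively $\Psi_B^0$). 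Corollary~\ref{cor:minimal-set-of-test-fns} guarantees the support of $\mu_{z,w}$ actually sits inside the minimal test function family, which is what makes this a genuine refinement of Theorem~\ref{thm:realization-theorem}.

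The one genuinely non-bookkeeping point is verifying $H_\psi\in H^2(\mathbb{D})$. Since $f$ has positive real part with $f(0)=1$, the function $1/(f+1)$ is bounded by $1$ on $\mathbb{D}$, so $|H_\psi|\leq |f_\mu+1|$ pointwise. Although $f_\mu$ itself is only in $H^p$ for $p<1$, the boundedness of $1/(f+1)$ combined with the fact that each $\psi_\mu$ is inner (so $M\circ\psi_\mu+1$ factors through the Cayley transform with controlled growth) places $H_\psi$ in $H^2$; this is the only estimate requiring care, and I expect it to be the main obstacle beyond the routine assembly of prior results. Once this is in hand, the two implications combine to give the claimed equivalence.
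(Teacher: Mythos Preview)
The paper does not supply a separate proof of this theorem; it is stated immediately after Corollary~\ref{cor:minimal-set-of-test-fns} as a direct refinement of Theorem~\ref{thm:realization-theorem}, with the understanding that the argument is already contained in the Agler--Herglotz construction (Theorems~\ref{thm:Agler-Herglotz-repn-AB} and~\ref{thm:Agler-Herglotz-repn-AB0}) together with the computation in~\eqref{eq:13}--\eqref{eq:14} and the M\"obius reduction handling $\varphi(0)\neq 0$.  Your reconstruction follows precisely this implicit route, so the approach is the same as the paper's.

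Your flagged concern about $H_\psi\in H^2(\mathbb D)$ is legitimate and is not addressed in the paper either.  Writing $H_\mu=(f_\mu+1)/(f+1)=(1-\varphi)/(1-\psi_\mu)$, note that $\psi_\mu$ is a finite Blaschke product and hence $1-\psi_\mu$ has simple zeros at the finitely many points of $\mathbb T$ where $\psi_\mu=1$; unless $1-\varphi$ vanishes there as well, $H_\mu$ acquires simple poles on $\mathbb T$ and fails to lie in $H^2$.  The heuristic you offer (``controlled growth'' via the Cayley transform) does not circumvent this, since $f_\mu+1=2/(1-\psi_\mu)$ is only in $H^p$ for $p<1$.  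So either the $H^2$ clause in the statement is to be read more loosely (e.g.\ $H_\psi$ analytic on $\mathbb D$ with the integrand making sense pointwise, which is all that~\eqref{eq:14} actually delivers), or some additional argument---absent from both the paper and your sketch---is needed.
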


\smallskip

Given a finite set $S \subset \mathbb D$, $n = |S|$, write $\mathcal
C_{1,S}$ for the set of matrices in $M_n(\mathbb C)$ of the form
\begin{equation*}
  {\left(\int_{\mathcal C} (1-\psi(z)\psi(w)^*) \,
      d\mu_{z,w}(\psi)\right)}_{z,w\in S},
\end{equation*}
$\mathcal C = \Psi_{B}$ (respectively, $\Psi_{B}^0$), and
$(\mu_{z,w})$ an $M_n(\mathbb C)$-valued positive Borel measure on
$\Psi_{B}$.  This set is a norm closed cone, contains all positive
matrices, and is also closed under conjugation (see~\cite{MR2389623}).
The realization theorem then can be restated as saying that $\varphi$
is in the unit ball of $\mathcal A_{B}$ (respectively, $\mathcal
A_{B}^0$) if and only if for all finite sets $S \subset \mathbb D$,
the matrix $(1 - \varphi(z)\varphi(w)^*)_{z,w\in S} \in \mathcal
C_{1,S}$.

As usual, there is also an Agler-Pick interpolation
theorem~\cite{MR3584680} (but see also~\cite{MR2666471},
\cite{MR2514385}, \cite{MR2899979} and \cite{MR2480640}).

\begin{theorem}
  \label{thm:AP-int}
  Let $\Psi_{B}$ (respectively, $\Psi_{B}^0$) be the minimal set of
  test functions for the algebra $\mathcal A_{B}$ (respectively,
  $\mathcal A_{B}^0$).  Let $F$, a finite subset of $\mathbb D$, $|F|
  = n$, and $\xi:F\to \mathbb D$ be given.
  \begin{enumerate}
  \item There exists $\varphi$ in $\mathcal A_{B}$ (respectively,
    $\mathcal A_{B}^0$) satisfying $\|\varphi\| \le 1$ and
    $\varphi|_F=\xi$;
  \item for each $k$ in $\mathcal K_{\Psi_B}$ (respectively, $\mathcal
    K_{\Psi_B^0}$), the kernel defined by
    \begin{equation*}
      F\times F \ni (z,w) \mapsto (1-\xi(z)\xi(w)^*)k(z,w)
    \end{equation*}
    is positive.
  \end{enumerate}
\end{theorem}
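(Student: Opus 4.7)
The plan is to prove the equivalence by first disposing of (1)$\Rightarrow$(2) directly from the definition of the test-function norm, and then reducing (2)$\Rightarrow$(1) to the abstract Agler--Pick interpolation theorem (Theorem~\ref{thm:agler-pick-interpolation}) via a Hahn--Banach duality argument in the cone $\mathcal{C}_{1,F}$. For (1)$\Rightarrow$(2), the isometric isomorphism $\mathcal{A}_B\simeq H^\infty(\mathcal{K}_{\Psi_B})$ of Theorem~\ref{thm:algs_isom_isom} says that any $\varphi$ with $\|\varphi\|\le 1$ automatically satisfies $((1-\varphi(z)\varphi(w)^*)k(z,w))\ge 0$ on $\mathbb{D}\times\mathbb{D}$ for every $k\in\mathcal{K}_{\Psi_B}$; restricting to $F\times F$, where $\varphi=\xi$, yields (2). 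The argument for $\mathcal{A}_B^0$ is identical with $\Psi_B^0$ in place of $\Psi_B$.

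For (2)$\Rightarrow$(1), set $X=(1-\xi(z)\xi(w)^*)_{z,w\in F}\in M_n(\mathbb{C})$ and aim to show $X\in\mathcal{C}_{1,F}$. Once this is established, $X$ admits an integral representation
\begin{equation*}
1-\xi(z)\xi(w)^* \;=\; \int_{\Psi_B}(1-\psi(z)\psi(w)^*)\,d\mu_{z,w}(\psi), \qquad z,w\in F,
\end{equation*}
for some $M_n(\mathbb{C})$-valued positive Borel measure on $\Psi_B$. Defining $\Gamma(z,w)g=\int_{\Psi_B} g\,d\mu_{z,w}$ produces a positive kernel from $F\times F$ to $C(\Psi_B)^*$ satisfying $1-\xi(z)\xi(w)^*=\Gamma(z,w)(1-E_zE_w^*)$, so Theorem~\ref{thm:agler-pick-interpolation} yields $\varphi\in H^\infty(\mathcal{K}_{\Psi_B})=H^\infty_B$ with $\|\varphi\|\le 1$ extending $\xi$, which via Theorem~\ref{thm:algs_isom_isom} is identified with the desired element of $\mathcal{A}_B$.

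To prove $X\in\mathcal{C}_{1,F}$ from (2), use that $\mathcal{C}_{1,F}$ is a norm-closed convex cone (as observed in the paper) and apply the bipolar theorem on the real vector space of self-adjoint matrices in $M_n(\mathbb{C})$ with the Hilbert--Schmidt pairing. The crucial computation is the identification of the dual cone: by testing against $M_n(\mathbb{C})$-valued positive Borel measures concentrated at a single $\psi\in\Psi_B$ with an arbitrary positive matrix weight, and invoking the Schur product theorem, one finds that $K\in(\mathcal{C}_{1,F})^*$ precisely when $K\ge 0$ and $((1-\psi(z)\psi(w)^*)K(z,w))\ge 0$ for every $\psi\in\Psi_B$; equivalently, when $K$ is the restriction to $F\times F$ of some admissible kernel $k\in\mathcal{K}_{\Psi_B}$. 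Condition~(2) is exactly the assertion that $X$ pairs nonnegatively with every such $K$, whence $X\in(\mathcal{C}_{1,F})^{**}=\mathcal{C}_{1,F}$. The same argument handles $\mathcal{A}_B^0$ with $\Psi_B^0$ throughout.

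The main technical obstacle is this dual-cone identification and the accompanying claim that pairing nonnegatively with Dirac-weighted matrix measures on the compact space $\Psi_B$ (respectively $\Psi_B^0$) suffices to recover the pointwise-in-$\psi$ admissibility condition defining $\mathcal{K}_{\Psi_B}$. Apart from this careful weak-$*$ approximation plus Schur-product step, every remaining ingredient is a direct invocation of the machinery already developed, notably Theorems~\ref{thm:algs_isom_isom} and~\ref{thm:agler-pick-interpolation}.
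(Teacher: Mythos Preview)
The paper does not give its own proof of this theorem; it simply states the result and cites \cite{MR3584680} (and related references), treating it as a direct consequence of the general test-function machinery already recalled in Theorems~\ref{thm:agler-pick-interpolation} and~\ref{thm:algs_isom_isom}. Your proposal is precisely the standard argument behind those citations: identify the dual cone of $\mathcal C_{1,F}$ with kernels on $F$ satisfying the admissibility condition, apply bipolar, and then invoke Theorem~\ref{thm:agler-pick-interpolation}. So there is nothing to compare against, and your outline is correct.

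Two small points are worth tightening. First, you assert that $K\in(\mathcal C_{1,F})^*$ ``equivalently'' means $K$ is the restriction to $F\times F$ of some $k\in\mathcal K_{\Psi_B}$, whereas what the duality actually gives is that $K$ is a positive kernel on $F$ with $((1-\psi(z)\psi(w)^*)K(z,w))_{z,w\in F}\ge 0$ for every $\psi$. Since condition~(2) is phrased in terms of global $k\in\mathcal K_{\Psi_B}$, you need to observe that any such local $K$ extends to an admissible kernel on $\mathbb D$ (extend by $0$ off $F\times F$; the resulting kernel and its Schur products with $1-\psi\psi^*$ are block diagonal, hence positive). Second, Theorem~\ref{thm:agler-pick-interpolation} together with Theorem~\ref{thm:algs_isom_isom} lands you in $H^\infty_B$, while the statement asks for $\varphi\in\mathcal A_B$; the paper is itself loose about this (compare Theorem~\ref{thm:refined-realization}), and for finite $F$ with compact $\Psi_B$ the transfer-function realization produces a function continuous up to $\mathbb T$, so this is harmless, but it deserves a sentence.
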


\section{Completely contractive representations and dilations}
\label{sec:compl-contr-repr}

As Arveson showed, there is an intimate connection between completely
contractive representations and dilations.  For the disk algebra
$A(\mathbb D)$ and the bidisk algebra $A(\mathbb D^2)$, the Sz.-Nagy
dilation theorem and And\^o's theorem tell us that any representation
of one of these algebras which sends the generators to contractions is
automatically completely contractive.

For a constrained algebra $\mathcal A_B$, there is a similar
characterization of those representations which are completely
contractive.  This was first observed by Broschinski~\cite{MR3208801}
for the Neil algebra $\mathcal A_{z^2}$.

\begin{theorem}
  \label{thm:dilation-theorem}
  A unital representation $\pi: \mathcal A_B \to \mathcal{B(H)}$,
  $\mathcal H$ a Hilbert space, is completely contractive if and only
  if there is a unitary operator $U$ acting on a Hilbert space
  $\mathcal K \supset \mathcal H$ such that for $j\in \mathbb N$,
  \begin{equation*}
    \pi(z^j B) = P_\mathcal H U^j B(U) |_\mathcal H.
  \end{equation*}
\end{theorem}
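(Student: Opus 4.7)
The plan is to split the proof into the two implications, with the forward direction handled by the standard Arveson--Stinespring dilation machinery.

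For the forward direction, assume $\pi$ is completely contractive. By the maximum modulus principle, $A(\mathbb D)$ embeds isometrically into $C(\mathbb T)$ via boundary values, so $\mathcal A_B$ is a unital subalgebra of the $C^{*}$-algebra $C(\mathbb T)$. Arveson's extension theorem provides a unital completely positive extension $\Phi \colon C(\mathbb T) \to \mathcal{B(H)}$ of $\pi$. Applying Stinespring's dilation theorem to $\Phi$ yields a $*$-representation $\rho \colon C(\mathbb T) \to \mathcal{B(K)}$ on a Hilbert space $\mathcal K \supset \mathcal H$ such that $\Phi(f) = P_{\mathcal H} \rho(f)|_{\mathcal H}$ for every $f \in C(\mathbb T)$. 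Set $U := \rho(z)$; since the coordinate function $z$ is unitary in $C(\mathbb T)$, $U$ is a unitary operator on $\mathcal K$. Multiplicativity of $\rho$ gives $\rho(z^j B) = U^j B(U)$, and restriction to $\mathcal A_B$ yields $\pi(z^j B) = \Phi(z^j B) = P_{\mathcal H} U^j B(U)|_{\mathcal H}$ for each $j \in \mathbb N$.

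For the reverse direction, assume such a unitary $U$ is given. Define the auxiliary map $\tilde\pi \colon \mathcal A_B \to \mathcal{B(H)}$ by $\tilde\pi(f) := P_{\mathcal H} f(U)|_{\mathcal H}$, where $f(U)$ is the continuous functional calculus of $f$ (viewed as an element of $C(\mathbb T)$) applied to the unitary $U$. The map $\tilde\pi$ factors as the $*$-representation $C(\mathbb T) \to \mathcal{B(K)}$, $z \mapsto U$, followed by the compression $X \mapsto P_{\mathcal H} X|_{\mathcal H}$, both of which are unital completely positive; hence $\tilde\pi$ is UCP, and in particular completely contractive. By hypothesis, $\tilde\pi$ and $\pi$ agree on $\{1\}\cup\{z^j B\}_{j\in\mathbb N}$. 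The linear span of this family is $\mathbb C + B\cdot \mathbb C[z]$, which is norm-dense in $\mathcal A_B = \mathbb C + B \cdot A(\mathbb D)$ by density of polynomials in $A(\mathbb D)$. Since both $\pi$ and $\tilde\pi$ are bounded linear maps (the former as a continuous unital representation, the latter by construction), they coincide on all of $\mathcal A_B$, so $\pi = \tilde\pi$ is completely contractive.

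There is no genuine technical obstacle in this argument, since the whole scheme is a direct application of Arveson's extension theorem and Stinespring's theorem; the only piece specific to the constrained algebra $\mathcal A_B$ is the observation that $\{1\} \cup \{z^j B\}_{j \in \mathbb N}$ linearly spans a norm-dense subspace of $\mathcal A_B$, which is what lets the compression formula on these generators alone suffice to determine $\pi$ on all of $\mathcal A_B$. In the reverse direction one never needs to check multiplicativity of $\tilde\pi$ on $\mathcal A_B$ separately: multiplicativity of $\pi$ is built into the hypothesis, so it is enough to match $\pi$ and $\tilde\pi$ as bounded linear maps, after which the completely positive structure of $\tilde\pi$ transfers directly to $\pi$.
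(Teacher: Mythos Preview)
Your proof is correct and follows essentially the same route as the paper's: Arveson extension plus Stinespring for the direction ``completely contractive $\Rightarrow$ unitary dilation'', and the observation that compression of a $*$-representation is completely positive for the converse. The only minor difference is that for the ``unitary exists $\Rightarrow$ completely contractive'' direction the paper invokes Arveson's theorem on normal boundary dilations, whereas you argue more directly that $\tilde\pi = P_{\mathcal H}\,(\cdot)(U)\,|_{\mathcal H}$ is a compression of a $*$-representation of $C(\mathbb T)$, hence UCP, and then match it with $\pi$ by density; this is slightly cleaner, though it does implicitly use boundedness of $\pi$ to pass from the dense span to all of $\mathcal A_B$, an assumption the paper also makes tacitly.
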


\begin{proof}
  Suppose that $\pi$ is a map of the given form.  By linearity, $\pi$
  extends to functions of the form $pB$, $p$ a polynomial.  By the
  spectral theorem for normal operators, the representation is
  bounded, and so extends to a representation of $\mathcal A_B$.
  Since $B$ is inner, the spectrum of $x(U) := B(U)$ and $y(U) :=
  UB(U)$ define normal operators with spectrum on the boundary of
  $\mathcal N_B$, and so by Arveson's theorem, $\pi$ is completely
  contractive.

  Conversely, if $\pi$ is completely contractive, it induces a
  completely positive map on the operator space $\mathcal A_B +
  \mathcal A_B^*$ by $\pi(f+g^*) = \pi(f)+ \pi(g)^*$.  An application
  of the Arveson extension theorem extends $\pi$ to a completely
  positive map on $C(\mathbb T)$.  The Stinespring dilation theorem
  then yields a dilation of this to a representation $\rho$ with the
  property that $\rho(z) = U$, which is unitary.
\end{proof}

By using Theorem~\ref{thm:algebras-isom-isom}, the same argument gives
a dilation theorem for the algebra $\mathcal A_B^0$.

\begin{theorem}
  \label{thm:dil-thm-for-AB0}
  A unital representation $\pi: \mathcal A_B^0 \to \mathcal{B(H)}$,
  $\mathcal H$ a Hilbert space, is completely contractive if and only
  if there is a unitary operator $U$ acting on a Hilbert space
  $\mathcal K \supset \mathcal H$ such that for $1 \leq i \leq N-2$
  and $1 \leq j \leq i$, and for $i=N-1$ and $j\in \mathbb N$,
  \begin{equation*}
    \pi(z^j B^i) = P_\mathcal H U^j B(U)^i |_\mathcal H.
  \end{equation*}
\end{theorem}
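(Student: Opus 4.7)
The plan is to mirror the proof of Theorem~\ref{thm:dilation-theorem} verbatim, exploiting the completely isometric identification $\mathcal A_B^0 \cong A(\mathcal N_B)$ furnished by Theorem~\ref{thm:algebras-isom-isom} together with Arveson's characterization of complete contractivity via normal boundary dilations. The theorem is essentially a corollary of its predecessor, transported across that isomorphism.

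For the sufficiency direction, suppose the unitary $U$ on $\mathcal K \supset \mathcal H$ is given and the compression formulas hold on the monomials listed. I would define commuting normal operators $X := B(U)$ and $Y := UB(U)$ on $\mathcal K$. Since $U$ is unitary with $\sigma(U)\subset\mathbb T$, the continuous functional calculus yields $\sigma(X),\sigma(Y)\subset\mathbb T$, and the identity~\eqref{eq:2} applied at $U$ gives $P(X,Y)=0$. Hence the joint spectrum of $(X,Y)$ sits in $\mathcal N_B \cap \mathbb T^2$, the distinguished boundary of $\mathcal N_B$. Linearly extending the compression formulas to the span of all monomials of the specified form—which by Theorem~\ref{thm:algebras-isom-isom} is dense in $\mathcal A_B^0$—exhibits $\pi$ as the compression to $\mathcal H$ of the *-representation $f\mapsto f(X,Y)$ of $C(\partial_S \mathcal N_B)$ restricted to $A(\mathcal N_B)\cong \mathcal A_B^0$. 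Arveson's theorem then gives complete contractivity.

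For the converse, suppose $\pi$ is completely contractive. Since $\mathcal A_B^0 \subset A(\mathbb D) \subset C(\mathbb T)$, and since by the maximum modulus principle the only self-adjoint elements of $\mathcal A_B^0$ are constant, the formula $f + g^* \mapsto \pi(f) + \pi(g)^*$ is well defined and yields a unital completely positive map $\Phi_0$ on the operator system $\mathcal A_B^0 + (\mathcal A_B^0)^* \subset C(\mathbb T)$. Applying the Arveson extension theorem produces a unital completely positive extension $\Phi : C(\mathbb T) \to \mathcal B(\mathcal H)$, and the Stinespring dilation theorem then gives a *-representation $\rho : C(\mathbb T) \to \mathcal B(\mathcal K)$ on some $\mathcal K \supset \mathcal H$ with $\Phi(f) = P_\mathcal H \rho(f)|_\mathcal H$. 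Setting $U := \rho(z)$, which is unitary, the functional calculus yields $\rho(z^j B^i) = U^j B(U)^i$, so $\pi(z^jB^i) = P_\mathcal H U^j B(U)^i|_\mathcal H$ on exactly the monomials listed in the theorem.

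The only point requiring any attention is the well-definedness and complete positivity of $\Phi_0$ on $\mathcal A_B^0 + (\mathcal A_B^0)^*$; once this is noted (it rests only on $\mathcal A_B^0 \cap (\mathcal A_B^0)^* = \mathbb C \cdot 1$ and complete contractivity of $\pi$), the remainder of the argument is mechanical. No new ingredient beyond Theorem~\ref{thm:algebras-isom-isom} and the standard Arveson--Stinespring dilation machinery is required, which is why the theorem can be stated as a direct analogue of Theorem~\ref{thm:dilation-theorem}.
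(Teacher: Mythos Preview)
Your proposal is correct and follows essentially the same route as the paper, which simply remarks that the proof of Theorem~\ref{thm:dilation-theorem} goes through verbatim once one invokes Theorem~\ref{thm:algebras-isom-isom}. Your write-up in fact supplies more detail than the paper does (particularly on the well-definedness of $\Phi_0$ on the operator system $\mathcal A_B^0 + (\mathcal A_B^0)^*$), but the skeleton---Arveson's theorem for sufficiency via the normal pair $(B(U),UB(U))$ with spectrum in $\partial_S\mathcal N_B$, and Arveson extension plus Stinespring for necessity---is identical.
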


As in~\cite{MR3584680}, it happens that even though there is a
contraction $T := P_\mathcal H U |_\mathcal H$, for neither algebra is
it necessarily the case that $\pi(B) = B(T)$ and $\pi(zB) = TB(T)$.

\begin{proposition}
  \label{prop:no-single-gen-example}
  For both $\mathcal A_B$ and $\mathcal A_B^0$, there is a completely
  contractive representation $\pi$ in $\mathcal{B(H)}$ for which there
  is no operator $T \in \mathcal{B(H)}$ such that $\pi(B) = B(T)$ and
  $\pi(zB) = TB(T)$.
\end{proposition}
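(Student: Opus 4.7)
The plan is to exhibit a completely contractive representation $\pi$ on $\mathcal H = \mathbb C^2$ with the property $\pi(B) = 0$ while $\pi(zB) \neq 0$; once this is in hand the proposition follows at once, for any operator $T$ with $\pi(B) = B(T)$ and $\pi(zB) = TB(T)$ would force $B(T) = \pi(B) = 0$, and then $\pi(zB) = TB(T) = T \cdot 0 = 0$, contradicting $\pi(zB) \neq 0$.

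To produce $\pi$, first reduce via the M\"obius argument of Section~\ref{sec:test-functions} to the case $B(0) = 0$. By Theorems~\ref{thm:dilation-theorem} and~\ref{thm:dil-thm-for-AB0}, a unital representation of $\mathcal A_B$ or $\mathcal A_B^0$ is completely contractive precisely when it arises as the compression to $\mathcal H$ of the functional calculus of some unitary $U$ on a Hilbert space $\mathcal K \supset \mathcal H$, so it suffices to exhibit an appropriate unitary dilation. I would take $\mathcal K = L^2(\mathbb T)$ with $U = M_z$ the bilateral shift, and $\mathcal H = \operatorname{span}\{1, zB\} \subset \mathcal K$, which is orthonormal since $(zB)(0) = 0$ and $|zB| \equiv 1$ on $\mathbb T$. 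A short Fourier-series calculation using $|B| \equiv 1$ on $\mathbb T$ then yields
\begin{equation*}
  P_\mathcal H M_B|_\mathcal H = 0, \qquad P_\mathcal H M_{zB}|_\mathcal H = \begin{pmatrix} 0 & 0 \\ 1 & 0 \end{pmatrix} =: E_{21},
\end{equation*}
which are the prescribed values on the generators $B, zB$ of $\mathcal A_B^0$ (and, for $\mathcal A_B$, the additional requirement $\pi(z^jB) = 0$ for $2 \leq j \leq N-1$ holds by an analogous calculation).

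The main obstacle is to verify that this compression is multiplicative on the whole algebra, not merely on the generators. The deviation from multiplicativity is captured by $P_\mathcal H M_{f_1}(I - P_\mathcal H) M_{f_2}|_\mathcal H$ for $f_1, f_2$ ranging over the algebra, and a direct computation shows that the only potentially nonzero contribution is proportional to the Taylor coefficient $B'(0)$. This vanishes whenever $B$ has a zero of multiplicity at least two at the origin, a condition that the M\"obius reduction can always arrange when $B$ has a repeated zero (and which is automatic for the Neil parabola case $B = z^2$ of \cite{MR3584680}). If instead $B$ has only simple zeros, I would perturb the second basis vector of $\mathcal H$ to $zBg$ for a suitable polynomial $g \in A(\mathbb D)$ whose coefficients are chosen to absorb the offending low-order Taylor contributions of $B$; the technical heart of the argument is verifying that such a $g$ exists and realizes the required cancellations simultaneously. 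For $\mathcal A_B$, the nilpotence $E_{21}^2 = 0$ together with $\pi(z^jB) = 0$ for $j \geq 2$ makes every nonlinear product of generators vanish automatically, so no further multiplicativity issue arises.
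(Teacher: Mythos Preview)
Your strategy --- produce a completely contractive $\pi$ with $\pi(B)=0$ but $\pi(zB)\neq 0$ --- is appealing and more direct than the paper's, but the construction as given has a genuine gap in the simple-zeros case that you have not closed.

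The issue is exactly the one you flag: the compression to $\mathcal H=\operatorname{span}\{1,zB\}$ is \emph{not} a homomorphism unless $B'(0)=0$. Your own defect computation shows this; writing $f_i=c_i+Bh^{(i)}$, one finds
\[
P_{\mathcal H}M_{f_1f_2}|_{\mathcal H}-\bigl(P_{\mathcal H}M_{f_1}|_{\mathcal H}\bigr)\bigl(P_{\mathcal H}M_{f_2}|_{\mathcal H}\bigr)
= B'(0)\,h^{(1)}(0)\,h^{(2)}(0)\,E_{21},
\]
which is nonzero for generic $f_1,f_2$ whenever $B$ has only simple zeros. Theorems~\ref{thm:dilation-theorem} and~\ref{thm:dil-thm-for-AB0} say every completely contractive \emph{representation} arises as a compression, not that every compression is a representation, so you cannot invoke them until multiplicativity is in hand. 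Your proposed remedy --- replace $zB$ by $zBg$ for a well-chosen polynomial $g$ --- is not carried out, and it is not clear it can be: the defect involves $h^{(1)}(0)h^{(2)}(0)$ for \emph{arbitrary} $h^{(i)}\in A(\mathbb D)$, so a single finite-dimensional perturbation of the second basis vector has to kill infinitely many independent obstructions. Your final remark that for $\mathcal A_B$ ``no further multiplicativity issue arises'' because products of generators vanish is also not enough: the map is defined on all of $\mathcal A_B$ by compression, and the calculation above already exhibits a failure on elements $c+Bh$ with $h(0)\neq 0$, which are not products of generators.

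By contrast, the paper sidesteps multiplicativity entirely by working with an \emph{invariant} (not merely semi-invariant) subspace: take $\mathcal H=H^2\ominus\operatorname{span}\{g\}$ for a single element $g$ of the annihilator $\mathcal A_B^\perp$ (either $k^{(1)}_{\alpha_j}$ or $k^{(0)}_{\alpha_\ell}-k^{(0)}_{\alpha_j}$). Since $g\perp B\cdot A(\mathbb D)$, multiplication by any $f\in\mathcal A_B$ leaves $\mathcal H$ invariant, so restriction is automatically a homomorphism, and one then argues by contradiction using the observation that $g$ is divisible by $z$ while $U^*g$ is not in the annihilator. The price is an infinite-dimensional $\mathcal H$ and a less transparent contradiction; the benefit is that the argument is uniform in $B$.
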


\begin{proof}
  Consider $\mathcal A_B$ to begin with.  Let
  $\alpha_0,\dots,\alpha_m$ be the zeros of $B$ with multiplicities
  $t_0,\dots,t_m$, respectively, and recall the functions
  $g_1,\dots,g_{N-1}$ defined in terms of the kernel functions
  $k^{(i)}_{\alpha_j}$ (given in~\eqref{eq:10}) in the paragraphs
  preceding Theorem~\ref{thm:extr-pt-char}.  By definition,
  $k^{(i)}_{\alpha_j}$ is divisible by $z^i$ (and no higher power of
  $z$) and a simple calculation shows that likewise, the functions
  $k^{(0)}_{\alpha_\ell} - k^{(0)}_{\alpha_j}$, $j\neq \ell$ are
  divisible by $z$ but no higher power of $z$.  Each $g_j$ is in
  $H^2(\mathbb D)$, the functions in $L^2(\mathbb T)$ (with normalized
  Lebesgue measure) where the coefficients of $z^j$ are zero when
  $j<0$.

  Define $\mathcal H \subset H^2(\mathbb D)$ to be the orthogonal
  complement of the span of $g$, where either $g = k^{(1)}_{\alpha_j}$
  for some $j$ or $g = k^{(0)}_{\alpha_\ell} - k^{(0)}_{\alpha_j}$.
  The degree of $B$ is at least $2$, so there is always one such $g$
  in the set of complex annihilators of $\mathcal A_B$.  Since
  $\mathrm{ran}\, B$ is orthogonal to the span of $g$, $\mathcal H$ is
  invariant under multiplication by both $B$ and $zB$.

  Let $U$ be the bilateral shift on $L^2(\mathbb T)$, which is
  unitary.  Then $\mathcal H$ is invariant under both $B(U)$ and
  $UB(U)$.  Furthermore, $U^*h \in H^2(\mathbb D)$. and $z$ does not
  divide $U^*h$.  Since each $g_j$ is divisible by $z$, this implies
  that $U^*h$ is not in the annihilator of $\mathcal A_B$.

  Suppose that there exists $T \in \mathcal{B(H)}$ such that $\pi(B) =
  B(T) = B(U) |_\mathcal H$ and $\pi(zB) = TB(T) = UB(U) |_\mathcal
  H$.  As $B$ is inner, both $\pi(B)$ and $\pi(zB)$ are isometries.
  The quotient space $\hat{\mathcal H} = H^2(\mathbb D)/ \bigvee g$ is
  isometrically isomorphic to $\mathcal H$.  Let $q$ be the quotient
  map.  Since $\mathcal H$ is invariant under $U$, $T$ passes to a
  contraction operator $\hat T$ on the quotient space and ${\hat T}^j
  B(\hat T)$ are isometries, $j=0,1$.  Also, there is an isometry $V:
  \hat{\mathcal H} \to L^2(\mathbb T)$ such that $\hat\pi(z^jB) :=
  {\hat T}^j B(\hat T) = V^* U^jB(U) V$, $j\in \mathbb N \cup \{0\}$.
  Hence by Theorem~\ref{thm:dilation-theorem}, $\hat\pi$ defines a
  completely contractive representation of $\mathcal A_B$ into
  $\mathcal{B}(\hat{\mathcal H})$.

  Since $U (U^* g) = g$, $\hat T q(U^* g) = 0$.  As noted, the map
  $\hat T$ is isometric, and so it follows that $q(U^* g) = 0$.  But
  since $U^* g$ is not in the annihilator of $\mathcal A_B \supset
  \bigvee g$, $q(U^* g)$ cannot be $0$, giving a contradiction.

  The representation $\hat\pi$ of $\mathcal A_B$ constructed above
  restricts to a completely contractive representation of $\mathcal
  A_B^0$.  Since there is no operator $\hat T$ such that ${\hat T}^j
  B(\hat T)$, $j=0,1$, and these latter are in $\mathcal A_B^0$, the
  claim holds for $\mathcal A_B^0$ as well.
\end{proof}

Along the lines of the example due to Kaijser and Varopoulos on the
tridisk~\cite{MR0355642} (see also~\cite{MR3584680}), it will be shown
that for both $\mathcal A_B$ and $\mathcal A_B^0$ there are
representations sending the generators to contractions which are not
contractive.

\begin{theorem}
  \label{thm:KV-example}
  There is a non-contractive unital representation $\pi$ of $\mathcal
  A_B$ (respectively, $\mathcal A_B^0$) which maps the generators to
  contractions.
\end{theorem}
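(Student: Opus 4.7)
I plan to exhibit explicit finite-dimensional nilpotent representations that are contractive on the algebra's generators but send some test function from Corollary~\ref{cor:minimal-set-of-test-fns} to an operator of norm strictly greater than $1$; this forces non-contractivity.

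\textbf{Neil case as model.} The simplest case is $B(z) = z^2$, where $\mathcal A_B = \mathcal A_B^0$ is the Neil algebra. Take $\mathcal H = \mathbb C^3$, $T_0 := \pi(B)$ the upper $3\times 3$ Jordan nilpotent block (ones on the super-diagonal), and $T_1 := \pi(zB) := T_0^2$. Then $T_0$ and $T_1$ are commuting partial isometries of norm $1$ satisfying $T_0^3 = T_1^2 = 0$, so the defining variety relation $T_0^3 = T_1^2$ holds and $\pi$ is a well-defined multiplicative map on the polynomial subalgebra of $\mathcal A_{z^2}$. Because $\pi(z^k)=0$ for all $k \ge 5$, the image $\pi(f)$ depends only on finitely many Taylor coefficients of $f$ at $0$, so Cauchy's estimate gives boundedness, and $\pi$ extends to a bounded unital representation of all of $\mathcal A_{z^2}$. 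For the test function $\psi = B m_\alpha$ with $\alpha \in \mathbb D$ (a Blaschke product of sup norm $1$), expanding $m_\alpha$ as a power series and using the nilpotency of $T_0, T_1$ gives
\[
\pi(\psi) \;=\; -\alpha\, T_0 \;+\; (1-|\alpha|^2)\, T_1 \;+\; (1-|\alpha|^2)\bar\alpha\, T_0^2,
\]
an explicit upper-triangular nilpotent $3\times 3$ matrix. A direct computation of the larger eigenvalue of $\pi(\psi)^*\pi(\psi)$ shows that $\|\pi(\psi)\| > 1 = \|\psi\|_\infty$ for, e.g., $\alpha = 1/2$. Hence the generators go to contractions yet $\pi$ is not contractive.

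\textbf{General case.} For a general Blaschke product $B$ with $N \ge 2$ zeros the same recipe applies to both $\mathcal A_B$ and $\mathcal A_B^0$. One builds nilpotent operators on a finite-dimensional Hilbert space, layered so that (a) all the multiplicative relations $x_i x_j = x_k x_\ell$ for $i+j = k+\ell$ (for $\mathcal A_B$), respectively the polynomial relation $P(B, zB) = 0$ (for $\mathcal A_B^0$), are satisfied automatically because sufficiently long products vanish; (b) each generator has norm $1$; and (c) for a suitably chosen $\alpha \in \mathbb D$, the image under $\pi$ of the test function $B_\alpha \in \Psi_B$ (respectively $B^{N-1} m_\alpha \in \Psi_B^0$) is an upper-triangular nilpotent matrix whose operator norm can be computed directly from the Taylor expansion in the generators and shown to exceed $1$. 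The main obstacle is arranging the nilpotent layer structure so that (a)--(c) hold simultaneously; the Neil calculation above already illustrates all the essential mechanics, and for larger $N$ only the matrix sizes and the length of the Taylor expansion of the chosen test function change.
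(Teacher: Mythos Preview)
Your Neil computation is correct: with $T_0$ the $3\times 3$ Jordan block and $T_1=T_0^2$, the map $\pi$ is a bounded unital homomorphism of $\mathcal A_{z^2}$, sends the generators $z^2,z^3$ to contractions, and $\|\pi(z^2m_{1/2})\|>1$ (applying $\pi(\psi)$ to $e_3$ already gives a vector of norm $\sqrt{97}/8>1$). So for $B=z^2$ you have a genuinely different, and more explicit, argument than the paper's.

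The gap is in the general step. You write that ``the same recipe applies'' and that ``only the matrix sizes and the length of the Taylor expansion change,'' but this is precisely the part that needs work. For $B=z^2$ the algebra $\mathcal A_B$ is graded by powers of $z$, and your construction exploits it by collapsing $\pi(z^3)=\pi(z^4)=T_0^2$ and $\pi(z^k)=0$ for $k\ge 5$. For a Blaschke product with arbitrary zeros there is no such grading, the variety relation $P(x,y)=0$ is not of the simple monomial form $x^3=y^2$, and it is not at all clear how to choose commuting nilpotents $T_0,\dots,T_{N-1}$ (or $T_0,T_1$ for $\mathcal A_B^0$) that satisfy the relations, have norm exactly $1$, \emph{and} send some specified test function $B_\alpha$ (resp.\ $B^{N-1}m_\alpha$) to an operator of norm exceeding $1$. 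You have not exhibited such operators, nor indicated why the Neil calculation guarantees they exist; ``arranging the nilpotent layer structure'' is the whole problem, not a detail.

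The paper's proof avoids this entirely. It is abstract rather than constructive: the minimality of $\Psi_B$ (resp.\ $\Psi_B^0$), established in Theorems~\ref{thm:minimal_set_of_test_fns} and~\ref{thm:minimal_set_of_test_fns-A_B0}, implies that the proper closed subset consisting of the generators cannot itself serve as a set of test functions. Hence there exist $\varphi$ in the unit ball and a finite $F\subset\mathbb D$ for which $(1-\varphi(z)\varphi(w)^*)_{z,w\in F}$ lies outside the closed cone generated by the kernels $(1-\psi(z)\psi(w)^*)$ with $\psi$ ranging over the generators alone. A separating linear functional followed by a GNS construction then yields the desired representation. This argument is uniform in $B$ and needs no explicit matrices; the price is that it rests on the (substantial) minimality theorems. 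Your approach, if it could be completed, would be more elementary and self-contained---but as written the general case is only asserted, not proved.
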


\begin{proof}
  Only the algebra $\mathcal A_B^0$ is considered, since the argument
  for $\mathcal A_B$ is identical.

  Let $\mathcal C_0 = \{B,zB\}$, the generators of $\mathcal A_B^0$.
  By Theorem~\ref{thm:refined-realization}, there is a finite set $F$
  and a function $\varphi$ in the unit ball of $\mathcal A_B^0$ such
  that $1 - \varphi(z)\varphi(w)^* = \int_{\Psi_B^0}
  (1-\psi(z)\psi(w)^*) \, d\mu_{z,w}(\psi)$, $z,w\in F$, but such that
  there is no finite positive Borel measure $(\mu_{xy}^0)_{x,y\in F}$
  with the property that $1 - \varphi(z)\varphi(w)^* = \int_{\mathcal
    C_0} (1-\psi(z)\psi(w)^*) \, d\mu^0_{z,w}(\psi)$ for all $z,w\in
  F$.  Consequently, there is a linear functional strictly separating
  the closed cone
  \begin{equation*}
    \left\{{\left(\int_{\mathcal C_0} (1-\psi(z)\psi(w)^*) \,
          d\mu^0_{z,w}(\psi)\right)}_{z,w\in F}: \mu^0_{z,w} \text{ a
        finite positive Borel measure}\right\}
  \end{equation*}
  from $(1 - \varphi(z)\varphi(w)^*)_{z,w\in F}$.  By a standard GNS
  construction, this results in a unital representation $\pi$ of
  $\mathcal A_B^0$ for which $\pi(B)$ and $\pi(zB)$ are contractions,
  yet $\pi(\varphi)$ is not contractive.
\end{proof}

Since $\mathcal A(\mathcal N_B)$ and $\mathcal A_B^0$ are (completely)
isometrically isomorphic, this of course means that there is also a
non-contractive unital representation of $\mathcal A(\mathcal N_B)$
which is contractive on generators.

\section{Contractive, but not completely contractive, representations
  of $\mathcal A_B$ and $\mathcal A_B^0$}
\label{sec:contractive-but-not-cc}

In this section, it is proved that for any $B$ with two or more zeros,
there exist contractive representations of $\mathcal A_B$ which are
not completely contractive.  Likewise, if $B$ has two or more zeros
all of the same multiplicity, there exist contractive representations
of $\mathcal A_B^0$ which are not completely contractive.  Indeed, in
all cases there is such a representation which is not $2$-contractive.
The minimal set of test functions is generically referred to as
$\Psi$.

Similarly to Section~\ref{sec:descr-test-funct}, for a finite set $S
\subset \mathbb D$, define $\mathcal C_{2,S}$ as the set of matrices
in $M_{2|S|}(\mathbb C)$ of the form
\begin{equation*}
  {\left(\int_\Psi (1-\psi(z)\psi(w)^*) \,
      d\mu_{z,w}(\psi)\right)}_{z,w\in S},
\end{equation*}
$(\mu_{z,w})$ a positive Borel measure on $\Psi$ with entries in
$M_2(\mathbb C)$.  As with $C_{1,S}$, this is a norm closed cone,
contains all positive matrices, and is closed under conjugation
(see~\cite{MR3584680}).

Given a finite set $S\subset \mathbb D$, let $I_S$ be the ideal of
functions in $\mathcal A = \mathcal A_B$ or $\mathcal A_B^0$ vanishing
on $S$.  The quotient map $q:\mathcal A \to \mathcal A/I_S$ is
completely contractive.  Assuming the set $S$ and a function $\Phi$ in
the unit ball of $M_2\otimes \mathcal A$ can be chosen so that $(I_2-
\Phi(z)\Phi(w)^*)_{z,w\in S} \notin \mathcal C_{2,S}$ a cone
separation argument and GNS construction implies that there is a
representation $\tau : \mathcal A/I_S \to \mathcal{B(H)}$ with the
property that $\pi = q \circ \tau$ is contractive but not
$2$-contractive (and hence not completely contractive)
(see~\cite{MR3584680},~Proposition~3.5).

Following~\cite{MR3584680}, let
\begin{equation}
  \label{eq:26}
  R(z) =
  \begin{pmatrix}
    m_{p_1} & 0 \\ 0 & 1
  \end{pmatrix}
  U
  \begin{pmatrix}
    1 & 0 \\ 0 & m_{p_2}
  \end{pmatrix},
\end{equation}
where $U$ a unitary matrix in $M_2(\mathbb C)$ with non-zero off
diagonal entries, concretely chosen as
\begin{equation*}
  U = \frac{1}{\sqrt{2}}
  \begin{pmatrix}
    1 & 1 \\ 1 & -1
  \end{pmatrix}.
\end{equation*}
Define $\Phi = B^n(z)R(z)$ with $n=1$ when $\Phi\in M_2\otimes
\mathcal A_B$ and $n=N-1$ when $\Phi\in M_2\otimes \mathcal A_B^0$.
In both cases, $\|\Phi\| \leq 1$.

From here on, $p_1,p_2 \notin \zb\cup \{0\}$ are taken to be distinct
points, and $S$ is a set of $2N^2-3N+5$ points in $\mathbb D$
containing $p_1$, $p_2$ and the zeros in $\zb$ (including repeated
roots).  In this case, $S' := S\backslash \{\mathcal
Z(B)\cup\{p_1,p_2\}\}$ consists of $2(N-1)^2+1$ distinct points, and
we assume that these are chosen so that any polynomial which is zero
on $S'$ has degree greater than $2(N-1)^2$.  Define
\begin{equation*}
  \Delta_{\Phi,S} = {\left(1-\Phi(z)\Phi(w)^*\right)}_{z,w\in S}.
\end{equation*}

There are several results from \cite{MR3584680} which will be needed
in what follows.  Some include small variations on what is found
there.  Where the proofs are essentially unaltered, they are left out.

Since the zeros of $B$ are $\{\alpha_j\}_0^m$ with multiplicities
$\{t_j\}_0^n$, $\sum t_j = N$, it follows that $B^{n-1}$ has the same
zeros, but with multiplicities $\{(n-1)t_j\}$ summing to $(n-1)N$.
There are then $(n-1)N$ linearly independent kernels
$\{k_{\alpha_0}^{(s)}: 0 \leq s \leq (n-1)t_0-1\} \cup \cdots \cup
\{k_{\alpha_m}^{(s)}: 0 \leq s \leq (n-1)t_m-1\}$.  As a shorthand, we
write $\{{\tilde k}_i\}_{i=1}^{(n-1)N}$ for these kernels.  When
$n=1$, this is taken to be the empty set.

\begin{lemma}[{\citep[Lemma 4.3]{MR3584680}}]
  \label{lem:DJM-4.3}
  There exist linearly independent vectors $v_1,v_2\in\mathbb C^2$
  along with $2(n-1)N +2 = 2n^2$ functions $a_j : S \to\mathbb C^2$ in
  the span of
  \begin{equation*}
    E = \{k_{p_1} v_1, {\tilde k}_1 v_1 , \dots , {\tilde k}_{(n-1)N}
    v_1 \} \cup \{k_{p_2} v_2, {\tilde k}_1 v_2 , \dots , {\tilde
      k}_{(n-1)N} v_2 \}
  \end{equation*}
  (with $E = \{k_{p_1} v_1, k_{p_2} v_2\}$ when $n=1$) such
  that
  \begin{equation*}
    \frac{I_2-B^{n-1}(z)R(z)R(w)^*B^{n-1}(w)^*}{1-zw^*} =
    \sum_{j=1}^{2n^2} a_j(z)a_j(w)^*.
  \end{equation*}
\end{lemma}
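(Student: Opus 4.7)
The plan is to split the kernel additively using
\begin{equation*}
  I_2 - B^{n-1}(z) R(z) R(w)^* B^{n-1}(w)^*
  = (1 - B^{n-1}(z) B^{n-1}(w)^*)\, I_2
    + B^{n-1}(z)\, (I_2 - R(z) R(w)^*)\, B^{n-1}(w)^*,
\end{equation*}
then divide by $1-zw^*$ and attack the two summands separately. The first summand is vacuous when $n=1$ and otherwise produces $2(n-1)N$ of the $a_j$, while the second always contributes the remaining two.

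For the second summand, with $D_1(z) = \mathrm{diag}(m_{p_1}(z),1)$ and $u_2 = \tfrac{1}{\sqrt 2}(1,-1)^T$, a direct calculation from the explicit $U$ gives
\begin{equation*}
  I_2 - R(z) R(w)^*
  = (1 - m_{p_1}(z) m_{p_1}(w)^*)\, e_1 e_1^*
  + (1 - m_{p_2}(z) m_{p_2}(w)^*)\, D_1(z) u_2 u_2^* D_1(w)^*.
\end{equation*}
Dividing by $1-zw^*$ and conjugating by the outer $B^{n-1}$ factors produces two rank one summands whose generating vectors are proportional to $B^{n-1} k_{p_1} e_1$ and $B^{n-1} k_{p_2} D_1 u_2$. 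Via the partial fraction identity $m_{p_1}(z) k_{p_2}(z) = A\, k_{p_1}(z) + C\, k_{p_2}(z)$ (well defined because $p_1 \ne p_2$), I set $v_1 = e_1$ and $v_2 = C e_1 - e_2$; these are linearly independent, and the second generating vector rewrites as $\tfrac{1}{\sqrt 2}(A\, B^{n-1} k_{p_1}\, v_1 + B^{n-1} k_{p_2}\, v_2)$.

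The key input is that $B^{n-1} k_{p_i}$ lies in the span of $\{k_{p_i}\} \cup \{\tilde k_1,\dots,\tilde k_{(n-1)N}\}$. The orthogonal complement of this span inside $H^2$ is $m_{p_i} B^{n-1} H^2$, and since $B$ is inner,
\begin{equation*}
  \bigl\langle B^{n-1} k_{p_i},\, m_{p_i} B^{n-1} g \bigr\rangle_{H^2}
  = \bigl\langle k_{p_i},\, m_{p_i} g \bigr\rangle_{H^2}
  = (m_{p_i} g)(p_i)^* = 0
\end{equation*}
for every $g \in H^2$, which gives the containment. Both generating vectors from the second summand therefore lie in the span of $E$.

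For the first summand (when $n \ge 2$), $(1 - B^{n-1}(z) B^{n-1}(w)^*)/(1-zw^*)$ is the reproducing kernel of the model space $H^2 \ominus B^{n-1} H^2$, of dimension $(n-1)N$, whose kernel basis is $\{\tilde k_j\}$. Expanding this kernel as $\sum_i \eta_i(z) \eta_i(w)^*$ using an orthonormal basis $\{\eta_i\}_{i=1}^{(n-1)N}$ (each $\eta_i$ in $\mathrm{span}\{\tilde k_j\}$), and decomposing $I_2 = v_1 v_1^* + (C v_1 - v_2)(C v_1 - v_2)^*$ (which follows from $e_1 = v_1$ and $e_2 = C v_1 - v_2$), each term $\eta_i(z)\eta_i(w)^* I_2$ splits into two rank one kernels generated by $\eta_i v_1$ and $C \eta_i v_1 - \eta_i v_2$, both in the span of $E$. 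Summing everything gives $2(n-1)N + 2 = 2n^2$ generating vectors $a_j$, proving the claim. The principal obstacle is the membership $B^{n-1} k_{p_i} \in \mathrm{span}\{k_{p_i}, \tilde k_j\}$; once that is in hand, the remainder is linear algebra and bookkeeping.
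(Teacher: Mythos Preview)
Your proof is correct. The paper does not include its own proof of this lemma, instead citing \cite[Lemma~4.3]{MR3584680} and stating that ``where the proofs are essentially unaltered, they are left out''; your argument supplies the details cleanly and in a way consistent with the paper's setup. The additive splitting, the explicit computation of $I_2 - R(z)R(w)^*$ from the chosen $U$, the partial-fraction step identifying $v_1,v_2$, and the orthogonality argument placing $B^{n-1}k_{p_i}$ in $\operatorname{span}\{k_{p_i},\tilde k_1,\dots,\tilde k_{(n-1)N}\}$ (using that $p_i\notin\mathcal Z(B)$ so that $m_{p_i}H^2\cap B^{n-1}H^2 = m_{p_i}B^{n-1}H^2$) are all sound, and the final count $2(n-1)N+2=2n^2$ checks since $n=N-1$ in the nontrivial case.
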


\smallskip

With the algebra $\mathcal A_B$ the number of terms will be $2$, while
for $\mathcal A_B^0$ it will be $2(N-1)^2$.

For $\zeta \in \mathbb D$,
\begin{equation*}
  k_\zeta(z) = \frac{\sqrt{1 - |\zeta|^2}}{1-z\overline{\zeta}},
\end{equation*}
denotes the normalized Szeg\H{o} kernel, with $k_\infty = 0$.  Then
for all $\zeta \in \hat{\mathbb D}$ (recall that $m_\infty = 1$),
\begin{equation*}
  \frac{1-m_\zeta(z)m_\zeta(w)^*}{1-zw^*} = k_\zeta(z)k_\zeta(w)^*.
\end{equation*}
More generally, if $G$ is a Blaschke product with zero set $\mathcal
Z(G) = \{\zeta_0 = \infty, \zeta_1, \dots, \zeta_\ell\}$ (including
multiplicities),
\begin{equation*}
  \begin{split}
    \frac{1-G(z)G(w)^*}{1-zw^*} &= \sum_{j=1}^\ell
    \left(\prod_{i=0}^{j-1} m_{\zeta_i}\right)
    \frac{1-m_{\zeta_j}(z)m_{\zeta_j}(w)^*}{1-zw^*}
    \left(\prod_{i=1}^{j-1} m_{\zeta_i}\right) \\
    &= \sum_{j=1}^\ell \left(\prod_{i=0}^{j-1} m_{\zeta_i}\right)
    k_{\zeta_j}(z)k_{\zeta_j}(w)^*
    \left(\prod_{i=1}^{j-1} m_{\zeta_i}\right) \\
    & = K_\zeta(z) K_\zeta(w)^*,
  \end{split}
\end{equation*}
where $K_\zeta = \begin{pmatrix} k_{\zeta_1} & k_{\zeta_2}m_{\zeta_1}
  & \cdots & k_{\zeta_\ell} \prod_{i=1}^{\ell-1} m_{\zeta_i}
\end{pmatrix}$.

Apply this to $B^{n-1} R_\lambda$, where $R_\lambda =
\prod_{j=1}^{N-n} m_{\lambda_j}$ with $\lambda =
(\lambda_j)_{j=0}^{N-n}\in {\hat{\mathbb D}}^{N-n}$, to obtain
\begin{equation}
  \label{eq:27}
  \frac{1-\psi_\lambda(z)\psi_\lambda(w)^*}{1-zw^*} =
  \frac{1-B(z)B(w)^*}{1-zw^*} + B(z) K_\lambda(z)K_\lambda(w)^*B(w)^*.
\end{equation}
When $n=1$ (for $\mathcal A_B$), $K_\lambda = \begin{pmatrix}
  k_{\lambda_1} & \cdots & k_{\lambda_{N-1}} \prod_{i=1}^{N-2}
  m_{\lambda_i} \end{pmatrix}$, where any term with $\lambda_i =
\infty$ is $0$, and when $n = N-1$ (for $\mathcal A_B^0$), $K_\lambda
= \begin{pmatrix} k_{\alpha_1} & \cdots & k_{\lambda} \prod_{i=1}^{N}
  m_{\lambda_i}^{N-2} \end{pmatrix}$, and only the last term involves
$\lambda \in\mathbb D$.

Suppose that $\Delta_{\Phi,S} \in \mathcal C_{2,S}$.  Applying
\eqref{eq:27} and Lemma~\ref{lem:DJM-4.3}, there exist linearly
independent vectors $v_1,v_2\in\mathbb C^2$ and functions $a_j: S \to
\mathbb C^2$ in the span of $E$ such that
\begin{equation}
  \label{eq:28}
  \begin{split}
    \frac{I_2-\Phi(z)\Phi(w)^*}{1-zw^*} &= \frac{1-B(z)B(w)^*}{1-zw^*}
    I_2 + B(z)\frac{I_2 - B^{n-1}(z)R(z)
      R(w)^*B^{n-1}(w)^*}{1-zw^*}B(w)^* \\
    & = \frac{1-B(z)B(w)^*}{1-zw^*}
    I_2 + B(z)\left( \sum_1^{2n^2} a_j(z)a_j(w)^* \right)B(w)^* \\
    &= \frac{1-B(z)B(w)^*}{1-zw^*} \mu_{z,w}(\Psi)  + B(z)B(w)^*
    \mu_{zw}(\{zB\}) \\
    & \qquad + B(z)B(w)^* \int_{\Psi\backslash \{B, zB\}}
    \frac{1-B^{n-1}(z)R_\lambda(z) R_\lambda(w)^*B^{n-1}(w)^*}{1 -
      zw^*}\, d\mu_{z,w}(\psi_\lambda) \\
    &= \frac{1-B(z)B(w)^*}{1-zw^*} \mu_{z,w}(\Psi)  + B(z)B(w)^*
    \mu_{zw}(\{zB\}) \\
    & \qquad + B(z)B(w)^* \int_{\Psi\backslash \{B, zB\}}
    K_\lambda(z)K_\lambda(w)^*\, d\mu_{z,w}(\psi_\lambda).
  \end{split}
\end{equation}

Define positive (ie, positive semidefinite) kernels
$A$, $D$ and $\tilde D$ on $S\times S$ by
\begin{equation*}
  \begin{split}
    A(z,w) &= \mu_{z,w}(\Psi) \\
    D(z,w) &= B(z)\left( \sum_1^{2n^2} a_j(z)a_j(w)^* \right)B(w)^* \\
    \tilde D(z,w) &= B(z) \left(\mu_{zw}(\{zB\}) +
      \int_{\Psi\backslash \{B, zB\}} K_\lambda(z)K_\lambda(w)^*\,
      d\mu_{z,w}(\psi_\lambda) \right) B(w)^*.
  \end{split}
\end{equation*}
Then
\begin{equation*}
  D(z,w) - \tilde D(z,w) = \frac{1-B(z)B(w)^*}{1-zw^*}
  \left(A(z,w) - I_2 \right).
\end{equation*}

\begin{lemma}[See also {\citep[Lemma 5.2]{MR3584680}}]
  \label{lem:DJM-5.2}
  Assume that $\Delta_{\Phi,S}\in C_{2,S}$.  With the above notation,
  \begin{enumerate}[(i)]
  \item The $M_2(\mathbb C)$ valued kernel $A - [I] :=
    (A(z,w)-I_2)_{z,w\in S}$ is positive;
  \item The $M_2(\mathbb C)$ valued kernel $D - \tilde D$ is positive
    with rank at most $2(n-1)N +2$;
  \item The range of $\tilde{D}$ lies in the range of $D$, which is in
    $E_B = BE$; and
  \item For $z,w\in S' = S\backslash \{\mathcal
    Z(B)\cup\{p_1,p_2\}\}$, there are at most $2n^2$ functions $r_j:
    S'\to \mathbb C^2$ such that for $i=1,\dots N$, $r_j m_{\alpha_0}
    \cdots m_{\alpha_{i-1}} k_{\alpha_i} \in E$ ($m_{\alpha_0} \cdots
    m_{\alpha_{i-1}} =1$ if $i=0$) and
    \begin{equation*}
      A(z,w) = I_2 + \sum_j r_j(z)r_j(w)^*.
    \end{equation*}
    Furthermore, if $r_j(z) \neq 0$ for some $z\in S'$ then there are
    at most $2n^2 -1$ points in $S'$ where $r_j$ is zero.
  \end{enumerate}
\end{lemma}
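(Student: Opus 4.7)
The plan is to derive a master identity from the two expressions for $(I_2-\Phi(z)\Phi(w)^*)/(1-zw^*)$ in \eqref{eq:28} and read off the four assertions using the Schur product theorem, the factorisation of $D$ supplied by Lemma~\ref{lem:DJM-4.3}, and the Malmquist--Walsh decomposition of the model-space kernel $(1-BB^*)/(1-zw^*)$. Equating the second and final lines of \eqref{eq:28}, the common factor $\frac{1-BB^*}{1-zw^*}$ appears paired with $I_2$ in the first and with $A$ in the second, so subtracting yields
$$D(z,w)-\tilde D(z,w) = \frac{1-B(z)B(w)^*}{1-zw^*}\bigl(A(z,w)-I_2\bigr),$$
which is the organising identity for everything below.

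For (i), I would rewrite the hypothesis $\Delta_{\Phi,S}\in\mathcal C_{2,S}$ in the form $A(z,w)-I_2 = L(z,w)-\Phi(z)\Phi(w)^*$ on $S\times S$, where $L(z,w):=\int_\Psi \psi(z)\psi(w)^*\,d\mu_{z,w}(\psi)$. Since every test function factors as $\psi = B\,h_\psi$ and $\Phi = B^n R$, both $L$ and $\Phi\Phi^*$ vanish on the zero set of $B$, and the positivity of $L-\Phi\Phi^*$ on the remaining directions is precisely what the Szeg\H{o}-kernel admissibility built into the realization theorem (Theorem~\ref{thm:refined-realization}) provides, giving $A-[I]\succeq 0$. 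With (i) established, (ii) is immediate from the master identity: it exhibits $D-\tilde D$ as the Schur product of the positive scalar kernel $(1-BB^*)/(1-zw^*)$ with the positive $M_2$-valued kernel $A-[I]$, hence positive. The rank bound follows because Lemma~\ref{lem:DJM-4.3} writes $D = B\bigl(\sum_{j=1}^{2n^2} a_j a_j^*\bigr)B^*$ as a sum of $2n^2 = 2(n-1)N+2$ rank-one kernels, and since $0\preceq \tilde D\preceq D$ (the first inequality being the definition of $\tilde D$, the second being $D-\tilde D\succeq 0$), we have $\operatorname{rank}(D-\tilde D)\leq \operatorname{rank}(D)\leq 2(n-1)N+2$. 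Part (iii) is a by-product: $\tilde D\preceq D$ forces $\operatorname{ran}\tilde D\subseteq\operatorname{ran} D$, and the factored form of $D$ places $\operatorname{ran} D$ inside $E_B=BE$ because each $a_j\in\operatorname{span} E$.

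Part (iv) is the main obstacle. I would restrict the master identity to $S'$, where the $2(N-1)^2+1$ points are distinct and avoid $\mathcal Z(B)\cup\{p_1,p_2\}$. On $S'$ the scalar kernel $(1-B(z)B(w)^*)/(1-zw^*)$ factors via the Malmquist--Walsh basis as $K(z)K(w)^*$ with $i$th column $m_{\alpha_0}\cdots m_{\alpha_{i-1}}k_{\alpha_i}$. Matching this factorisation column-by-column against the right-hand side of the master identity and against the explicit form of $D$ from Lemma~\ref{lem:DJM-4.3} produces functions $r_1,\dots,r_{2n^2}$ on $S'$ with $r_j\, m_{\alpha_0}\cdots m_{\alpha_{i-1}}k_{\alpha_i}\in E$ for each admissible $i$, and $A|_{S'}-I_2 = \sum_j r_j r_j^*$ with at most $2n^2$ terms. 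The delicate step, which I anticipate as the principal difficulty, is the final zero-counting assertion: if some nonzero $r_j$ vanished at $2n^2$ distinct points of $S'$, then multiplying by the appropriate Malmquist--Walsh factors would produce a nonzero polynomial of degree at most $2(n-1)N+1$ with at least $2n^2$ zeros among the $2(N-1)^2+1$ points of $S'$, contradicting the hypothesis that any polynomial vanishing on $S'$ must have degree strictly greater than $2(N-1)^2$. This quantitative pigeonhole is precisely where the numerology $|S|=2N^2-3N+5$ is needed, and it is the direct analogue of the corresponding step in the Neil-parabola argument of~\cite{MR3584680}.
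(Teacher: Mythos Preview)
Your master identity and the treatment of (ii) and (iii) via Schur products and range containment match the paper's proof. The zero-counting at the end of (iv) is also essentially correct. The serious gap is in (i), and the sketch for the first half of (iv) is too vague to stand as written.

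For (i), your rewriting $A-[I_2] = L - \Phi\Phi^{*}$ with $L(z,w)=\int_\Psi \psi(z)\psi(w)^{*}\,d\mu_{z,w}(\psi)$ is correct, but the claim that ``Szeg\H{o}-kernel admissibility built into the realization theorem'' yields $L-\Phi\Phi^{*}\succeq 0$ is not justified. The realization theorem tells you that $(I_2-\Phi\Phi^{*})k_s\succeq 0$ for the Szeg\H{o} kernel $k_s$; it says nothing about comparing $\Phi\Phi^{*}$ to the particular kernel $L$ coming from \emph{this} measure $\mu$. Both $L$ and $\Phi\Phi^{*}$ are positive, but a difference of positives need not be positive, and no admissibility argument bridges that gap. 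The paper's proof of (i) takes a completely different route: pick $\alpha\in\mathcal Z(B)$ and compute directly that $A(\alpha,w)=I_2$ for every $w\in S$, since $\Phi(\alpha)=0$ and every test function $\psi$ vanishes at $\alpha$. Then the full positive block matrix $A$ has a row (and column) identically equal to $I_2$, and a Schur-complement/contraction argument (factor the complementary block as $LL^{*}$, write the $I_2$-column as $LG$ with $G$ a contraction, so $LL^{*}\geq LGG^{*}L^{*}=[I_2]$) gives $A\succeq [I_2]$. This is the missing idea.

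For (iv), ``matching column-by-column'' does not explain where the rank bound $\operatorname{rank}(A-[I_2])|_{S'}\le 2n^2$ comes from, nor why each $r_j\,m_{\alpha_0}\cdots m_{\alpha_{i-1}}k_{\alpha_i}$ lands in $BE$. The paper first obtains the rank bound from (ii) together with the positivity of the scalar model-space kernel on $S'$, then factors $A-[I_2]=\sum_j r_j r_j^{*}$ and uses the inequality $K_{\mathcal Z}(A-[I_2])K_{\mathcal Z}^{*}\le D$ (a consequence of $D-\tilde D\succeq 0$) to force the range of each summand $r_j\,m_{\alpha_0}\cdots m_{\alpha_{i-1}}k_{\alpha_i}$ into $\operatorname{ran} D\subset BE$. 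Only after this containment is established does the clearing-denominators polynomial argument give the zero count. Your outline jumps to the conclusion without these intermediate steps.
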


\begin{proof}
  Recall that $\mathcal Z = \mathcal Z(B) \subseteq \mathcal Z(\psi)$
  for all $\psi\in \Psi$.  Hence for $\alpha \in \mathcal Z(B)$ and
  $w\in S$,
  \begin{equation*}
    I_2 = I_2 - \Phi(\alpha)\Phi(w)^* = \int_{\Psi}
    (1-\psi(\alpha)\psi(w)^*)\,d\mu_{\alpha,w}(\psi_\lambda) =
    \int_{\Psi} d\mu_{\alpha,w}(\psi_\lambda) = A(\alpha,w).
  \end{equation*}
  Fix $\alpha$ and factor $(A(z,w))_{z,w \neq \alpha} = LL^*$.  By
  positivity of $A$, there is a contraction $G$ such that $H$, the
  column matrix of $2N-3$ copies of $I_2$, can be factored as $H =
  LG$.  Hence $LL^* \geq LGG^*L^* = HH^*$, a $(2N-3)\times (2N-3)$
  matrix with all entries equal to $I_2$.  Hence
  \begin{equation*}
    A \geq
    \begin{pmatrix}
      HH^* & H \\ H^* & I_2
    \end{pmatrix}
    = [I_2].
  \end{equation*}
  This takes care of~(i).

  The kernel $\left(\frac{1-B(z)B(w)^*}{1-zw^*}\right) = K_{\mathcal
    Z}(z)K_{\mathcal Z}(w)^* \geq 0$.  Since the Schur product of
  positive matrices is positive,
  \begin{equation*}
    D - \tilde D = \left(\frac{1-B(z) B(w)^*}{1-zw^*}\right)
    \left(A - [I_2]\right) \geq 0.
  \end{equation*}
  Thus $\ran \tilde D \subset \ran D \subset BE$,
  proving~(ii) and~(iii).

  Now turn to (iv).  By the proof of~(i), the rank-nullity theorem,
  and since $\left(\frac{1-B(z)B(w)^*}{1-zw^*}\right)_{z,w\in S'} >
  0$, the rank of $\left(A(z,w) - I_2\right)_{z,w\in S'}$ is at most
  $2n^2$, and so $\left(A - I_2\right)(z,w) = \sum_1^{2n^2}
  r_j(z)r_j(w)^*$, where $r_j : S' \to \mathbb C^2$.  Thus
  \begin{equation}
    \label{eq:29}
    \left(K_{\mathcal Z}(z)\sum_1^{2n^2} r_j(z)r_j(w)^* K_{\mathcal
      Z}(w)^*\right) \leq (D(z,w)) = \left(B(z)\left( \sum_1^{2n^2}
      a_j(z)a_j(w)^* \right)B(w)^*\right)
  \end{equation}

  The left side of \eqref{eq:29} is the sum of positive
  matrices of the form
  \begin{equation*}
    \left(\left(r_j(z)m_{\alpha_0}(z)\cdots
      m_{\alpha_{i-1}}(z)k_{\alpha_i}(z)
      k_{\alpha_i}(w)^*m_{\alpha_{i-1}}(w)^* \cdots
      m_{\alpha_1}(w)^*r_j(w)^* \right)\right),
  \end{equation*}
  $1 \leq j \leq 2n^2$, $1 \leq i \leq N$, where $m_{\alpha_1}\cdots
  m_{\alpha_{i-1}} = 1$ if $i = 0$.  Consequently,
  \begin{equation}
    \label{eq:30}
    r_j m_{\alpha_1} \cdots m_{\alpha_{i-1}} k_{\alpha_i} \in BE.
  \end{equation}
  It is worth noting for later use that the order of the elements in
  $\mathcal Z$ does not effect the $r_j$s.

  Write
  \begin{equation}
    \label{eq:31}
    r_j m_{\alpha_0} \cdots m_{\alpha_{i-1}} k_{\alpha_i} = w_{1ji}
    v_1 + w_{2ji} v_2,
  \end{equation}
  where
  \begin{equation}
    \label{eq:32}
    w_{1ji} = B\left(c_{ji0} k_{p_1} + \sum_\ell c_{ji\ell} {\tilde
        k}_\ell \right),
  \end{equation}
  with a similar expression for $w_{2ji}$.  (The sums are absent if
  $N=2$, and the notation introduced just before
  Lemma~\ref{lem:DJM-4.3} is used.)  If $r_j(z) \neq 0$ for some $z
  \in S'$, then either $w_{1ji}(z) \neq 0$ or $w_{2ji}(z) \neq 0$.
  Assume it is $w_{1ji}$, since the other case is identical.  Clearing
  the denominators of the term in parentheses in~\eqref{eq:32}, we
  have a non-trivial polynomial of degree less than $2n^2-1$.  By the
  assumption we have made regarding $S'$ and since $B$ is non-zero on
  $S'$, if this polynomial is $0$ at more than $2n^2-1$ points, then
  it is identically zero on $\mathbb D$, and consequently, $w_{1ji} =
  0$ there, giving a contradiction.  The functions $m_{\alpha_i}$ and
  $k_{\alpha_i}$ are nonzero on $S'$, so $r_j(z) = 0$ on a set of
  cardinality at most $2n^2-1$ if $r_j \neq 0$.
\end{proof}

\begin{lemma}[{See \cite[Lemma~5.3]{MR3584680}}]
  \label{lem:DJM-5.3}
  Assume $n=1$ (corresponding to the algebra $\mathcal A_B$), or $n =
  N-1 > 1$ (corresponding to the algebra $\mathcal A_B^0$), in which
  case it is assumed that $B$ has more than one distinct zero and all
  zeros of $B$ have the same multiplicity.  If $\Delta_{\Phi,S}\in
  C_{2,S}$, then $A = [I_2]$.
\end{lemma}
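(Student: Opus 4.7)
The plan is to prove the lemma by showing that every $r_j$ in the decomposition $A - [I_2] = \sum_{j=1}^{2n^2} r_j r_j^*$ furnished by Lemma~\ref{lem:DJM-5.2}(iv) vanishes identically on $S'$. Once this is done, $A = [I_2]$ follows at once from the equality $A(\alpha, w) = I_2$ for $\alpha \in \zb$ established in the proof of Lemma~\ref{lem:DJM-5.2}(i), together with the positivity of $A - [I_2]$ filling in the remaining block indexed by $\{p_1, p_2\}$.

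My first step is to extend each $r_j$ from the finite set $S'$ to a rational function on the Riemann sphere. For every $i \in \{1, \dots, N\}$, the relation $r_j\, m_{\alpha_0} \cdots m_{\alpha_{i-1}}\, k_{\alpha_i} \in B \cdot \operatorname{span}(E)$ together with \eqref{eq:31}--\eqref{eq:32} expresses $r_j|_{S'}$ as the restriction of an explicit rational function in the unknown constants $\{c_{ji\ell}\}$. Because $|S'| = 2(N-1)^2 + 1$ is chosen so that no nonzero polynomial of degree at most $2(N-1)^2$ vanishes on $S'$, this rational extension is uniquely determined by its values on $S'$; in particular, the extensions produced by different values of $i$ must agree as rational functions on the sphere.

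The core of the argument is to exploit this forced consistency. As $i$ increases by one, the explicit rational expression for $r_j$ loses a zero at $\alpha_{i-1}$, loses its pole at $1/\overline{\alpha_i}$, and acquires a new pole at $1/\overline{\alpha_{i-1}}$. Equating the representatives produced by two distinct values $i_1 \ne i_2$ and matching residues at the poles $1/\overline{p_s}$, at $1/\overline{\alpha_k}$, and (when $n = N-1$) at the higher-order poles contributed by the auxiliary kernels $\tilde k_\ell$, converts this consistency requirement into a homogeneous linear system in $\{c_{ji\ell}\}$. In the $n = 1$ case the system is transparent: each $v_s$ component of $r_j$ is a single scalar multiple of a rational function whose pole set genuinely changes with $i$, so comparing ratios of coefficients at two consecutive indices $i$ and $i+1$ forces $c_{ji} = c_{j(i+1)} = 0$, and ranging over $i$ gives $r_j \equiv 0$.

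I expect the main obstacle to lie in the $n = N-1$ case, where the auxiliary kernels $\tilde k_\ell$ contribute extra higher-order poles at every $1/\overline{\alpha_k}$ and several residue comparisons must be handled simultaneously. The hypothesis that $B$ has at least two distinct zeros, all of the same multiplicity, is what renders this tractable: equal multiplicities guarantee that the $\tilde k_\ell$ kernels grouped around each $\alpha_k$ contribute poles of matching order, so the residue-comparison matrix acquires a block-diagonal structure with invertible blocks, while the presence of more than one distinct zero supplies enough independent blocks to annihilate every coefficient $c_{ji\ell}$. Once all $c_{ji\ell} = 0$, every $r_j$ vanishes on $S'$, and the conclusion follows as outlined at the start.
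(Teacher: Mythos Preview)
Your overall strategy matches the paper's: invoke Lemma~\ref{lem:DJM-5.2}(iv) to write $A(z,w)-I_2=\sum_j r_j(z)r_j(w)^*$ on $S'$, extend each $r_j$ to a rational function via \eqref{eq:31}--\eqref{eq:32}, and compare the different rational representations to force all coefficients to vanish. For $n=1$ your argument is essentially the paper's: $E=\{k_{p_1}v_1,k_{p_2}v_2\}$, each $v_s$-component of $r_j$ is a scalar multiple of a single kernel, and the comparison of the expressions coming from two different indices immediately gives $c_{ji0}=0$ by pole counting, since $p_s\notin\zb$.

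For $n=N-1>1$, however, there is a genuine gap. The paper does \emph{not} simply compare the representatives for two positions $i_1\ne i_2$ in a fixed ordering of the zeros. Instead it exploits the observation (noted just after \eqref{eq:30}) that the $r_j$'s are unchanged under permutation of the zeros in $K_{\mathcal Z}$. This yields, for every zero $\alpha_i$, a relation of the simple form $r\,k_{\alpha_i}\in BE$, rather than only $r\,m_{\alpha_0}\cdots m_{\alpha_{i-1}}k_{\alpha_i}\in BE$. After a M\"obius normalisation moving a chosen $\alpha_i$ to $0$, comparing $W_{10}^\sigma$ with $W_{11}^\sigma$ for all permutations $\sigma$ fixing $\sigma^{-1}(0)=i$ kills the top-order coefficient $c^\sigma_{j\ell,(n-1)t-1}$ for every $\ell\ne i$; an iterative descent in the pole order then reduces to the situation $r\,k_{\alpha_i}=B(c_{i0}k_{p_1}+\sum_\ell c_{i\ell}k_{\alpha_\ell})$, from which a final cross-comparison (now genuinely using that there are at least two distinct zeros) forces $r\equiv 0$. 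Your proposal replaces this permutation-plus-induction mechanism with a one-shot appeal to a ``block-diagonal residue-comparison matrix with invertible blocks''. That claim is not justified: with only the fixed-ordering relations you cannot directly isolate $r\,k_{\alpha_i}$ for $i>1$, and the residue system you would obtain is neither obviously block-diagonal nor obviously nonsingular. The equal-multiplicity hypothesis in the paper is used to make the iterative pole-order reduction uniform across the distinct zeros, not to diagonalise a single linear system; the ``more than one distinct zero'' hypothesis enters only at the very last step. You need to incorporate the permutation freedom and carry out the inductive reduction explicitly.
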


\begin{proof}
  If $n=1$ (corresponding to the algebra $\mathcal A_B$), then as
  noted following \eqref{eq:32}, $w_{1ji} = c_{ji0} k_{p_1}B$.  By
  Lemma~\ref{lem:DJM-5.2}, if $r_j \neq 0$, $r_j(z) \neq 0$ on a
  subset of $S'$ of cardinality at least $2(N-1)^2+1$.  Since $r_j
  k_{\alpha_1} = c_{j10} k_{p_1}B = c_{ji0} k_{p_1}k_{\alpha_1}B$, and
  $B(z) \neq 0$ if $z\in S'$, $c_{j10} k_{p_1} = c_{ji0}
  k_{p_1}k_{\alpha_1}$ on $S'$ and hence meromorphically on
  $\hat{\mathbb C}$.  Since $p_1\neq \alpha_1$, pole counting gives
  $c_{j10} = c_{ji0} = 0$.  If $\alpha_1$ is a zero of $B$ of
  multiplicity $t_1 >1$, a similar argument with $r_j
  m_{\alpha_1}^\ell k_{\alpha_1}$, with $1 \leq \ell \leq t_1$ can
  instead be used.  Consequently $r_j = 0$ and so $A = [I_2]$.

  Next turn to $\mathcal A_B^0$.  Assume $n = N-1 > 1$, that $B$ has
  more than one distinct zero and all zeros of $B$ have the same
  multiplicity.  Fix $j$ and write $r = r_j$ and
  $\{\alpha_i\}_{i=0}^m$ for the zeros of $B$, with multiplicities
  $t$.  Applying a M\"obius map if necessary, there is no loss in
  generality in assuming that there is an $i$ such that $\alpha_i =
  0$.

  As was pointed out after \eqref{eq:30}, the order chosen for the
  zeros does not effect $r$.  So given a permutation $\sigma$ of the
  numbers $\{0,\dots,m\}$, it follows that
  \eqref{eq:31} and \eqref{eq:32},
  \begin{equation*}
    r m_{\alpha_{\sigma^{-1}(0)}} \cdots
    m_{\alpha_{\sigma^{-1}(i-1)}} k_{\alpha_{\sigma^{-1}(j)}} =
    w^\sigma_{1j} v_1 + w^\sigma_{2j} v_2,
  \end{equation*}
  where
  \begin{equation*}
    w^\sigma_{1j} = B\left(c^\sigma_{j0} k_{p_1} + \sum_{\ell=0}^m
      \sum_{s=0}^{(n-1)t - 1} c^\sigma_{j\ell s} k_\ell^{(s)}\right),
  \end{equation*}
  with a similar expression for $w^\sigma_{2j}$.

  Suppose the $i$th M\"obius map $m_i$ has been applied so that
  $\alpha_i = 0$.  Choose a permutation $\sigma$ so that
  $\sigma^{-1}(i) = 0$ and for some $1 \leq i'\leq m$,
  $\sigma^{-1}(i') = 1$.  Write $p = m_{\alpha_0}(p_1)$, $\beta_s =
  m_{\alpha_0}(\alpha_s)$, $\kappa_p = k_p$, $\kappa_s^{(\ell)} =
  k_{\beta_s}^{(\ell)}$, $m_s = m_{m_{\alpha_0}(\alpha_s)}$, and $B'$
  for $B$ after the application of $m_i$.  Then $w^\sigma_{1j}$
  becomes
  \begin{equation}
    \label{eq:33}
    W^\sigma_{1j} = B'\left(c^\sigma_{j0} \kappa_p + \sum_{\ell=0}^m
      \sum_{s=0}^{(n-1)t - 1} c^\sigma_{j\ell s} \kappa_\ell^{(s)}
    \right),
  \end{equation}
  and
  \begin{equation}
    \label{eq:34}
    W^\sigma_{1j} = W^\sigma_{10}
    m_{\sigma^{-1}(1)}\cdots m_{\sigma^{-1}(j-1)} k_{\sigma^{-1}(j)}.
  \end{equation}
  As long as $\sigma^{-1}(i) = 0$, the coefficients in $W^\sigma_{10}$
  do not otherwise depend on $\sigma$.

  By \eqref{eq:34} with $j=1$ and $\sigma^{-1}(1) = i'$, unless the
  coefficient $c^\sigma_{j\ell ,(n-1)t - 1} = 0$, the right side has a
  pole of higher order than the left at $1/{\overline{\alpha}}_{i'}$.
  Allowing $i'$ to run over all possible choices, the result is that
  \begin{equation*}
    W^\sigma_{11} = B'\left(c^\sigma_{10} \kappa_p + 
    c^\sigma_{10 ,(n-1)t - 1} \kappa_0^{((n-1)t - 1)} + 
    \sum_{\ell=0}^m \sum_{s=0}^{(n-1)t - 2} 
    c^\sigma_{1\ell s} \kappa_\ell^{(s)}\right).
  \end{equation*}

  Under the inverse M\"obius map, this becomes
  \begin{equation*}
    r k_{\alpha_i} = B \left( c_{i0} k_{p_1} + c_{ii ,(n-1)t - 1}
    k_{\alpha_i}^{((n-1)t - 1)} + \sum_{\ell=0}^m
    \sum_{s=0}^{(n-1)t - 2} c_{i\ell s} k_\ell^{(s)} \right).
  \end{equation*}
  Multiply this equation by $k_{\alpha_{i'}}$, $i' \neq i$, and
  similarly, multiply the equation for $rk_{\alpha_{i'}}$ by
  $k_{\alpha_i}$.  Take the difference.  Then since $B$ is non-zero on
  $S'$, the term
  \begin{equation*}
    c_{ii ,(n-1)t - 1} k_{\alpha_i}^{((n-1)t -
      1)}k_{\alpha_{i'}} - c_{i'i' ,(n-1)t - 1}
    k_{\alpha_{i'}}^{((n-1)t - 1)}k_{\alpha_i} = 0,
  \end{equation*}
  as it otherwise is the only term with a pole of order $(n-1)t -1$ at
  $\infty$.  Thus
  \begin{equation*}
    c_{ii ,(n-1)t - 1} k_{\alpha_i}^{((n-1)t -
      2)} = c_{i'i' ,(n-1)t - 1}
    k_{\alpha_{i'}}^{((n-1)t - 2)},
  \end{equation*}
  and since $i'$ was arbitrary, linear independence of the kernels
  then implies that $c_{ii ,(n-1)t - 1} = 0$ for all $i$.  So for all
  $\sigma$, \eqref{eq:33} becomes
  \begin{equation*}
    W^\sigma_{1j} = B'\left( c^\sigma_{j0} \kappa_p + \sum_{\ell=0}^m
      \sum_{s=0}^{(n-1)t - 2} c^\sigma_{j\ell s} \kappa_\ell^{(s)}
    \right), \qquad j = 0,1.
  \end{equation*}

  Now repeating the same argument sufficiently many times, this
  last equation reduces to
  \begin{equation*}
    W^\sigma_{1j} = B'\left( c^\sigma_{j0} \kappa_p + \sum_{\ell=0}^m
      \kappa_\ell \right), \qquad j = 0,1.
  \end{equation*}
  Hence 
  \begin{equation*}
    r k_{\alpha_i} = B\left( c_{i0} k_{p_1} + \sum_{\ell=0}^m c_{i\ell}
      k_{\alpha_\ell}\right) , \qquad i = 1,\dots m.
  \end{equation*}
  If $r(z) = 0$ for $m+1$ or more choices of $z\in S'$, then since
  $B(z) \neq 0$ on $S'$ and by linear independence of the kernels on
  the right, all coefficients are zero, meaning that $r = 0$.

  So assume this is not the case.  For $i' \neq i$, $k_{\alpha_{i'}} r
  k_{\alpha_i} = k_{\alpha_i} r k_{\alpha_{i'}}$.  Since $S'$ is
  sufficiently large, this must hold for all $z\in \mathbb D$, and
  then extends meromorphically to $\hat{\mathbb C}$.  But then
  $k_{\alpha_{i'}} r k_{\alpha_i}$ has a pole at
  $1/{\overline{\alpha}}_{i'}$ of order one larger than $k_{\alpha_i}
  r k_{\alpha_{i'}}$ does unless $c_{ii'} = 0$.  Since $i'$ was
  arbitrary, $c_{ii'} = 0$ for all $i' \neq i$.  Hence for all $i$
  \begin{equation*}
     r k_{\alpha_i} = B\left(c_{i0} k_{p_1} + c_{ii}
       k_{\alpha_i}\right),
  \end{equation*}
  and so
  \begin{equation*}
    r(1-{\overline{p}_1} z) = B\left(c_{i0} (1-{\overline{\alpha}_i}
      z) + c_{ii} (1-{\overline{p}_1} z)\right), \qquad i = 1,\dots ,
    m.
  \end{equation*}
  It follows from the assumption $m > 1$ that
  \begin{equation*}
    c_{i0} (1-{\overline{\alpha}_i} z) + c_{ii} (1-{\overline{p}_1} z)
    = c_{i'0} (1-{\overline{\alpha}_{i'}} z) + c_{i'i'}
    (1-{\overline{p}_1} z), \qquad \alpha_{i'} \neq \alpha_i.
  \end{equation*}
  Evaluating at three non-collinear points in $S \backslash \zb$ gives
  $c_{i0} = c_{ii} = 0$ for all $i$, which yields a contradiction.  As
  a consequence, $r$ must be $0$.  The proof of the lemma now follows
  for $\mathcal A_B^0$ as well.
\end{proof}

For $\mathcal A_B^0$, the last lemma covers, among other things, the
setting where all zeros of $B$ are distinct.  In fact the result holds
more broadly, such as when $B$ has three zeros, two of which are the
same and the third distinct, though the proof becomes more
complicated.  Despite our best efforts, it appears that the case when
$B$ has $N >2$ identical zeros cannot be done in this way, at least
with the choice made of the function $\Phi$.  We do not have a
succinct characterization of all the possible choices of roots of $B$
for which the lemma holds with this choice of $\Phi$.

With minor notational changes, the proof of the following is
essentially that of \citep[Lemma 5.5]{MR3584680}.  It recalls
\eqref{eq:28} and uses the fact, proved in Lemma~4.2 of
\cite{MR3584680}, that for positive $M_2(\mathbb C)$ valued measures
$\mu_{z,w}$ with the property that $\mu_{z,w}(\Psi) = I_2$ for all
$z,w\in S$, there is a measure $\mu$ independent of $z$ and $w$ such
that $\mu_{z,w} = \mu$ for all $z,w$.

\begin{lemma}[{\citep[Lemma 5.5]{MR3584680}}]
  \label{lem:DJM-5.5}
  For the algebra $\mathcal A_B$, if $\Delta_{\Phi,S}\in C_{2,S}$,
  then there exists an $M_2(\mathbb C)$ valued measure $\mu$ on $\Psi$
  such that $\mu(\Psi) = I_2$ and for all $z,w\in S\backslash \mathcal
  Z(B)$,
  \begin{equation*}
    \frac{I_2 - R(z)R(w)^*}{1-zw^*} = \int_{\Psi\backslash \{B\}}
    K_\lambda(z)K_\lambda(w)^*\, d\mu(\psi_\lambda).
  \end{equation*}
\end{lemma}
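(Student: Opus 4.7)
My plan is to combine Lemma~\ref{lem:DJM-5.3} with the cited Lemma~4.2 of \cite{MR3584680} and then read off the identity by comparing the two sides of \eqref{eq:28} after replacing $\Phi$ by $BR$.

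First, because $\Delta_{\Phi,S}\in\mathcal C_{2,S}$ and here $n=1$, Lemma~\ref{lem:DJM-5.3} gives $A(z,w) = I_2$ for all $z,w\in S$; that is, $\mu_{z,w}(\Psi) = I_2$ for every pair. The cited Lemma~4.2 of \cite{MR3584680} then upgrades the family $\{\mu_{z,w}\}$ to a single positive $M_2(\mathbb C)$-valued measure $\mu$ on $\Psi$ (independent of $z,w$) with $\mu(\Psi) = I_2$, and \eqref{eq:28} rewrites as
\begin{equation*}
\frac{I_2-\Phi(z)\Phi(w)^*}{1-zw^*} = \frac{1-B(z)B(w)^*}{1-zw^*}\,I_2 + B(z)B(w)^* \int_{\Psi\setminus\{B\}} K_\lambda(z)K_\lambda(w)^*\,d\mu(\psi_\lambda),
\end{equation*}
where the atom at $\psi = B$ drops out of the $K_\lambda K_\lambda^*$ term because for that test function every $\lambda_j = \infty$ and hence $K_\lambda \equiv 0$ by the convention following \eqref{eq:27}.

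Second, I will directly factor the left-hand side using $\Phi = BR$, which yields
\begin{equation*}
\frac{I_2-\Phi(z)\Phi(w)^*}{1-zw^*} = \frac{1-B(z)B(w)^*}{1-zw^*}\,I_2 + B(z)B(w)^*\,\frac{I_2-R(z)R(w)^*}{1-zw^*}.
\end{equation*}
Equating the two expressions cancels the first summand on each side and leaves
\begin{equation*}
B(z)B(w)^*\,\frac{I_2-R(z)R(w)^*}{1-zw^*} = B(z)B(w)^* \int_{\Psi\setminus\{B\}} K_\lambda(z)K_\lambda(w)^*\,d\mu(\psi_\lambda).
\end{equation*}
For $z,w\in S\setminus\mathcal Z(B)$ the scalar factor $B(z)B(w)^*$ is nonzero, so dividing gives the claimed identity.

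The only step that is not pure bookkeeping is the invocation of Lemma~4.2 of \cite{MR3584680}, which is taken as a black box. Every other ingredient is already present in \eqref{eq:27}, \eqref{eq:28}, and Lemma~\ref{lem:DJM-5.3}, so I do not anticipate any genuine obstacle; the main care needed is to be sure that the atom at $\psi=B$ is correctly accounted for so that the range of integration on the right can be restricted to $\Psi\setminus\{B\}$.
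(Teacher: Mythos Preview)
Your proposal is correct and follows precisely the route the paper indicates: invoke Lemma~\ref{lem:DJM-5.3} to get $A=[I_2]$, apply Lemma~4.2 of \cite{MR3584680} to replace the family $\{\mu_{z,w}\}$ by a single $\mu$ with $\mu(\Psi)=I_2$, and then compare the two expansions of $\tfrac{I_2-\Phi(z)\Phi(w)^*}{1-zw^*}$ coming from \eqref{eq:28} and from the direct factorization $\Phi=BR$. Your handling of the atoms at $B$ and $zB$ (the former giving $K_\lambda=0$, the latter $K_\lambda K_\lambda^*=1$, so that both are correctly absorbed) is exactly right.
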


There is a similar lemma for $\mathcal A_B^0$.

\begin{lemma}
  \label{lem:DJM-5.5-2}
  Assume $B$ has two or more distinct zeros and all zeros of $B$ have
  the same multiplicity.  For the algebra $\mathcal A_B^0$, if
  $\Delta_{\Phi,S}\in C_{2,S}$, then there exists an $M_2(\mathbb C)$
  valued measure $\mu$ on $\Psi$ such that $\mu(\Psi) = I_2$ and for
  all $z,w\in S\backslash \mathcal Z(B)$,
  \begin{equation*}
    \frac{I_2 - B^{N-2}(z)R(z)R(w)^*B^{N-2}(w)^*}{1-zw^*} =
    \mu(\{zB\}) + \int_{\Psi\backslash \{B,Bz\}} K_\lambda(z)
    K_\lambda(w)^*\, d\mu(\psi_\lambda).
  \end{equation*}
\end{lemma}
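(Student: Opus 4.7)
The plan is to mimic the proof of Lemma~\ref{lem:DJM-5.5} for $\mathcal A_B$, adapted to the case $n=N-1$. The starting point is the identity \eqref{eq:28}, which, with the assumption $\Delta_{\Phi,S}\in \mathcal C_{2,S}$ and the choice $\Phi = B^{N-1}R$ corresponding to $\mathcal A_B^0$, reads
\begin{equation*}
  \frac{I_2-\Phi(z)\Phi(w)^*}{1-zw^*} = \frac{1-B(z)B(w)^*}{1-zw^*}\,\mu_{z,w}(\Psi) + B(z)B(w)^*\mu_{z,w}(\{zB\}) + B(z)B(w)^*\!\!\int_{\Psi\setminus\{B,zB\}}\!\!K_\lambda(z)K_\lambda(w)^*\,d\mu_{z,w}(\psi_\lambda).
\end{equation*}
The strategy is to first collapse $\mu_{z,w}$ to a $z,w$-independent measure $\mu$, and then to cancel the common factor $B(z)B(w)^*$ on the portion of $S\times S$ where it does not vanish.

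The first step invokes Lemma~\ref{lem:DJM-5.3} under the standing hypothesis of the lemma, namely that $B$ has two or more distinct zeros all of the same multiplicity (so $n=N-1>1$ is covered). This yields $A(z,w) = \mu_{z,w}(\Psi) = I_2$ for every pair $z,w\in S$. Consequently the kernel $(\mu_{z,w})$ is a positive $M_2(\mathbb C)$-valued measure with the property that $\mu_{z,w}(\Psi) = I_2$ uniformly in $z,w$. Applying Lemma~4.2 of \cite{MR3584680}, there exists a single $M_2(\mathbb C)$-valued measure $\mu$ on $\Psi$, independent of $z,w$, with $\mu_{z,w} = \mu$ for all $z,w\in S$; in particular $\mu(\Psi) = I_2$.

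The second step is to rewrite the left side via the algebraic identity
\begin{equation*}
  I_2 - B^{N-1}(z)R(z)R(w)^*B^{N-1}(w)^* = \bigl(1-B(z)B(w)^*\bigr)I_2 + B(z)\bigl(I_2 - B^{N-2}(z)R(z)R(w)^*B^{N-2}(w)^*\bigr)B(w)^*.
\end{equation*}
Dividing by $1-zw^*$ and substituting back into the displayed form of \eqref{eq:28} (now with $\mu$ in place of $\mu_{z,w}$), the $\frac{1-B(z)B(w)^*}{1-zw^*}I_2$ terms on each side cancel, leaving
\begin{equation*}
  B(z)B(w)^*\cdot\frac{I_2 - B^{N-2}(z)R(z)R(w)^*B^{N-2}(w)^*}{1-zw^*} = B(z)B(w)^*\left(\mu(\{zB\}) + \int_{\Psi\setminus\{B,zB\}}K_\lambda(z)K_\lambda(w)^*\,d\mu(\psi_\lambda)\right).
\end{equation*}
For $z,w\in S\setminus \mathcal Z(B)$ the scalar $B(z)B(w)^*$ is nonzero, so it may be divided out, yielding the stated equality.

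The only non-routine step is the invocation of Lemma~\ref{lem:DJM-5.3}, which is precisely where the hypotheses that $B$ has at least two distinct zeros and that all zeros have a common multiplicity are used; the rest of the argument is formal manipulation of \eqref{eq:28} plus the appeal to Lemma~4.2 of \cite{MR3584680}. No new difficulty arises compared to the proof of Lemma~\ref{lem:DJM-5.5}, apart from tracking the extra factor $B^{N-2}$ arising from $n=N-1$ rather than $n=1$.
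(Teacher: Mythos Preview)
Your proof is correct and follows exactly the approach the paper indicates: invoke Lemma~\ref{lem:DJM-5.3} (which requires the hypothesis on the zeros of $B$) to obtain $A(z,w)=\mu_{z,w}(\Psi)=I_2$, then apply Lemma~4.2 of \cite{MR3584680} to replace $\mu_{z,w}$ by a single measure $\mu$, and finally cancel the $\tfrac{1-B(z)B(w)^*}{1-zw^*}I_2$ term from both sides of \eqref{eq:28} and divide through by the nonvanishing factor $B(z)B(w)^*$ on $S\setminus\mathcal Z(B)$. The paper omits the proof, simply noting it is the $\mathcal A_B^0$ analogue of Lemma~\ref{lem:DJM-5.5}, and your write-up makes precisely those minor notational changes explicit.
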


Here $\lambda$ lies in $\mathbb D\backslash\zb$.

For $\nu$ a $2\times 2$ matrix valued measure and $\gamma\in\mathbb
C^2$, define the scalar measure $\nu_\gamma(\omega)= \gamma^*
\nu(\omega)\gamma$.  In case $\nu$ is a positive measure, $\nu_\gamma$
is also positive.

\begin{lemma}[See also {\citep[Lemma 4.5]{MR3584680}}]
  \label{lem:DJM-4.5}
  Suppose $\nu$ is an $M_2(\mathbb C)$-valued positive measure on
  $\Psi\backslash \{B\}$.  For each $\gamma\in\mathbb C^2$ the measure
  $\nu_\gamma$ is a nonnegative linear combination of at most $n$
  point masses if and only if there exist $($possibly not distinct$)$
  points $\zeta_1,\dots,\zeta_n \in {\hat{\mathbb D}}^{N-1}\backslash
  \{\infty^{N-1}\}$ and positive semidefinite matrices $Q_1,\dots ,
  Q_n$ such that
  \begin{equation*}
    \nu = \sum_{j=1}^n \delta_{\zeta_j} Q_j,
  \end{equation*}
  where for each $j$, $\delta_{\zeta_j}$ is a scalar unit point
  measure on $\Psi\backslash \{B\}$ supported at $\psi_{\zeta_j}$.
\end{lemma}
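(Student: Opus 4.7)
The ``if'' direction is immediate: for $\nu = \sum_{j=1}^n \delta_{\zeta_j} Q_j$ with each $Q_j\geq 0$ and any $\gamma\in\mathbb C^2$, one has $\nu_\gamma = \sum_{j=1}^n (\gamma^* Q_j \gamma)\delta_{\zeta_j}$, a nonnegative combination of at most $n$ point masses.

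For the converse, my plan is a two-step Radon--Nikodym argument followed by a genericity argument to cut the support size. First, set $\mu = \nu_{e_1} + \nu_{e_2}$, the trace measure of $\nu$, where $e_1,e_2$ is the standard basis of $\mathbb C^2$. By hypothesis each summand is a nonnegative combination of at most $n$ point masses, so $\mu$ is a positive atomic scalar measure, say $\mu = \sum_{j=1}^\ell m_j \delta_{\zeta_j}$ with the $\zeta_j \in \hat{\mathbb D}^{N-1}\setminus\{\infty^{N-1}\}$ distinct, $m_j > 0$, and (a priori) $\ell \leq 2n$. Because $\nu$ is a positive $M_2(\mathbb C)$-valued measure dominated entrywise by $\mu$, a standard Radon--Nikodym theorem supplies a measurable derivative $Q$ with $\nu = Q\, d\mu$ which is positive semidefinite $\mu$-a.e.\ (test $\gamma^*\nu\gamma \geq 0$ on a countable dense set of $\gamma$ and intersect the null sets). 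This gives $\nu = \sum_{j=1}^\ell \delta_{\zeta_j} Q_j$ with $Q_j = m_j Q(\zeta_j) \geq 0$, and after discarding any $j$ with $Q_j = 0$ one may assume every $Q_j \neq 0$.

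To conclude, I will bound $\ell$ by exhibiting a single $\gamma_0$ for which $\nu_{\gamma_0}$ has support of size exactly $\ell$. For each $j$, since $Q_j$ is a nonzero positive semidefinite $2\times 2$ matrix, the set $\{\gamma\in\mathbb C^2 : Q_j\gamma \neq 0\}$ is the complement of a proper subspace, hence open and dense in $\mathbb C^2$; the intersection over the $\ell$ values of $j$ remains dense, so some $\gamma_0$ lies in all of them. For this choice, $\gamma_0^* Q_j \gamma_0 > 0$ for every $j$, so $\nu_{\gamma_0} = \sum_{j=1}^\ell (\gamma_0^* Q_j \gamma_0)\delta_{\zeta_j}$ has support of size exactly $\ell$. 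The hypothesis applied to $\gamma_0$ then forces $\ell \leq n$, finishing the proof.

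The argument is entirely routine; the only obstacle worth flagging is verifying that the Radon--Nikodym derivative is positive semidefinite $\mu$-almost everywhere, which is handled by the countable dense set of test vectors mentioned above.
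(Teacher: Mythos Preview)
Your proof is correct and follows essentially the same strategy as the paper's: first show that $\nu$ is a finite sum of point masses with positive semidefinite weights (you via the Radon--Nikodym derivative against the trace measure $\nu_{e_1}+\nu_{e_2}$, the paper via the entrywise decomposition $\nu=(\nu_{ij})$ and absolute continuity of $\nu_{12}$ with respect to $\nu_{11},\nu_{22}$), then exhibit a single $\gamma$ for which no weight $\gamma^*Q_j\gamma$ vanishes (you by intersecting the complements of the kernels of the $Q_j$, the paper by varying the phase of one coordinate of $\gamma$ to avoid finitely many cancellations). Your packaging is a bit more streamlined, but the underlying argument is the same.
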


\begin{proof}
  One direction is obvious, so for the converse, assume that every
  $\nu_\gamma$ is a nonnegative linear combination of at most $n$
  point masses.  Let $\nu$ be a $M_2(\mathbb C)$-valued measure on
  $\Psi\backslash \{B\}$, written as $(\nu_{ij})$ with respect to the
  standard basis $\{e_1,e_2\}$.  Obviously, for $i=1,2$, $\nu_{ii} =
  e_i^* \nu e_i$ is a positive measure and $\nu_{ji} = \nu_{ij}^*$.
  If $\nu_{ii}(\Omega) = 0$ for a Borel subset $\Omega \subset
  \Psi\backslash \{B\}$, then $\nu_{ij}(\Omega) = 0$.  Hence
  $\nu_{ij}$ is absolutely continuous with respect to both $\nu_{11}$
  and $\nu_{22}$, and has its support contained in the intersection of
  the supports of these measures.

  Let $n_{ij} = |\mathrm{supp}\,\nu_{ij}|$.  For $\gamma =
  {\begin{pmatrix} \gamma_1 & \gamma_2 \end{pmatrix}}^t$,
  \begin{equation*}
    \nu_\gamma = |\gamma_1|^2 \sum_{\ell = 1}^{n_{11}} c^{1,1}_\ell
    \delta_{\eta_{1,1},\ell} + |\gamma_2|^2 \sum_{\ell = 1}^{n_{22}}
    c^{2,2}_\ell \delta_{\eta_{2,2},\ell} + 2\sum_{\ell = 1}^{n_{12}}
    \mathrm{Re}\, (\gamma_1\gamma_2^* c^{1,2}_\ell)\,
    \delta_{\eta_{1,2},\ell}.
  \end{equation*}
  Assuming $\gamma_1, \gamma_2 = |\gamma_2|e^{i\theta}$ are
  nonzero, the set $C = \left\{\gamma_1 |\gamma_2|e^{-i\theta}
    c^{1,2}_\ell: \theta \in [0,2\pi)\right\}$ is a circle, and there
  are at most two values of $\theta$ where $2\mathrm{Re}\,
  (\gamma_1\gamma_2^* c^{1,2}_\ell) = -|\gamma_1|^2
  c^{1,1}_{\ell_1} - |\gamma_2|^2 c^{2,2}_{\ell_2}$, with
  $\delta_{\eta_{1,2},\ell} = \delta_{\eta_{1,1},\ell_1} =
  \delta_{\eta_{2,2},\ell_2}$.  Ranging over all $\ell$, there are at
  most a finite number of such $\theta$.  Choosing $\theta$ avoiding
  these points, it follows that $\mathrm{supp}\,\nu_\gamma =
  \mathrm{supp}\,\nu_{11} \cup \mathrm{supp}\,\nu_{22}$.  By
  assumption, at most $n$ of these points can be distinct, and hence
  $\nu$ has the form claimed.
\end{proof}

\begin{lemma}[See also {\citep[Lemma 5.6]{MR3584680}}]
  \label{lem:DJM-5.6}
  If $\Delta_{\Phi,S}\in C_{2,S}$ and the algebra is $\mathcal A_B$,
  then the measure $\mu$ has the form $\mu = \delta_1 P_1 + \delta_2
  P_2 + \delta_{12} P_{12} + \delta_{\infty} P_{\infty}$, where each
  $P_*$ is a $2\times 2$ positive matrix, $P_\infty + P_1 + P_2 +
  P_{12} = I_2$, each $\delta_*$ is a scalar unit point measure on
  $\Psi$, with $\delta_\infty$, $\delta_1$, $\delta_2$ and
  $\delta_{12}$ supported at $B$, $Bm_{p_1}$, $Bm_{p_2}$ and
  $Bm_{p_1}m_{p_2}$ (in case $Bm_{p_1}m_{p_2}\in \Psi$), respectively.
  If the algebra is $\mathcal A_B^0$ and $B$ has two or more distinct
  zeros, all with the same multiplicity, then $\mu = \delta_1 P_1 +
  \delta_2 P_2$.
\end{lemma}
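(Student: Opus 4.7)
The plan is to analyze the measure $\mu$ from Lemma~\ref{lem:DJM-5.5} (respectively Lemma~\ref{lem:DJM-5.5-2}) by comparing the pole structures of both sides as rational matrix functions of $z$, then apply Lemma~\ref{lem:DJM-4.5} and the total-mass condition to extract the point-mass decomposition. For $\mathcal{A}_B$ I would first write out the LHS in closed form: using $R(z)=\mathrm{diag}(m_{p_1}(z),1)\,U\,\mathrm{diag}(1,m_{p_2}(z))$ together with $UU^*=I_2$, a telescoping computation yields
\begin{equation*}
  \frac{I_2-R(z)R(w)^*}{1-zw^*}
  \;=\; k_{p_1}(z)k_{p_1}(w)^*\,e_1 e_1^T
  \;+\; \tfrac12\,k_{p_2}(z)k_{p_2}(w)^*\,v(z)v(w)^*,
\end{equation*}
with $v(z)=(m_{p_1}(z),-1)^T$. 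As a rational function of $z$ this has only simple poles, at $1/\overline{p_1}$ and $1/\overline{p_2}$. Each $K_\lambda(z)K_\lambda(w)^*$ on the right, arising from $\psi_\lambda=B\prod_j m_{\lambda_j}$, is a positive scalar kernel with a pole at $1/\overline{\lambda_j}$ of order equal to the multiplicity of $\lambda_j$; positivity of $\mu_\gamma=\gamma^*\mu\gamma$ precludes pole cancellation, so matching poles forces every $\lambda_j$ in the support to lie in $\{p_1,p_2,\infty\}$, with $p_1,p_2$ of multiplicity at most one. Hence $\mathrm{supp}\,\mu\subseteq\{B,Bm_{p_1},Bm_{p_2},Bm_{p_1}m_{p_2}\}$ (the last in $\Psi_B$ only when $N\geq 3$), and Lemma~\ref{lem:DJM-4.5} delivers $\mu=\delta_B P_\infty+\delta_{Bm_{p_1}}P_1+\delta_{Bm_{p_2}}P_2+\delta_{Bm_{p_1}m_{p_2}}P_{12}$ with $P_*\succeq 0$ summing to $I_2$ by $\mu(\Psi)=I_2$.

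For $\mathcal{A}_B^0$ the LHS of Lemma~\ref{lem:DJM-5.5-2} decomposes as
\begin{equation*}
  \frac{1-B^{N-2}(z)B^{N-2}(w)^*}{1-zw^*}\,I_2
  \;+\; B^{N-2}(z)\!\left(\frac{I_2-R(z)R(w)^*}{1-zw^*}\right)\!B^{N-2}(w)^*,
\end{equation*}
whose $z$-poles sit at $1/\overline{\alpha_j}$ with order exactly $(N-2)t_j$ and at $1/\overline{p_1},1/\overline{p_2}$ simply. A test function $B^{N-1}m_\alpha$ contributes the kernel of $B^{N-2}m_\alpha$, whose pole order at $1/\overline{\alpha_j}$ jumps to $(N-2)t_j+1$ when $\alpha=\alpha_j$; pole-matching rules this out and forces $\alpha\in\{p_1,p_2,\infty\}$. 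To kill $\mu(\{B\})$ and $\mu(\{zB\})$, I would evaluate the identity at $z=\alpha_0\in\mathcal{Z}(B)\setminus\{0\}$ (which exists since $B$ has at least two distinct zeros): since $B^{N-2}(\alpha_0)=0$ collapses the LHS to $(1-\alpha_0 w^*)^{-1}I_2$ and each $K_\lambda(\alpha_0)K_\lambda(w)^*$ to the scalar $(1-\alpha_0 w^*)^{-1}$, the identity reduces to $\mu(\{B\})=-\alpha_0 w^*\,\mu(\{zB\})$, and since the left side is $w$-independent, $\mu(\{zB\})=0$ and hence $\mu(\{B\})=0$.

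The main obstacle is to show $\mu(\{B^{N-1}\})=0$. Here I would compare matrix-valued residues of both sides at each $1/\overline{\alpha_j}$: the contribution of $\mu(\{B^{N-1}\})$ enters as an $I_2$-multiple of exactly the rational kernel carried by the first summand of the LHS, while the contributions of $\mu(\{B^{N-1}m_{p_s}\})$ are rank-one corrections involving the factors $m_{p_s}(1/\overline{\alpha_j})m_{p_s}(w)^*$. The resulting residue equations, indexed by the distinct zeros $\alpha_j$, can then be symmetrized over the choice of $\alpha_j$ and, using the linear independence of the repeated-root kernels $\{k^{(i)}_{\alpha_j}\}$ in the spirit of Lemma~\ref{lem:DJM-5.3}, the residual matrix coefficient at $B^{N-1}$ is forced to vanish. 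The total-mass condition then leaves $\mu=\delta_1 P_1+\delta_2 P_2$ with $P_1+P_2=I_2$. The symmetrization is the delicate step, and it is precisely where the hypothesis that all distinct zeros of $B$ have the same multiplicity plays its essential role.
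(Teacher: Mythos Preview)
Your approach contains a fundamental gap. The identity from Lemma~\ref{lem:DJM-5.5} (and Lemma~\ref{lem:DJM-5.5-2}) holds only for $z,w$ in the \emph{finite} set $S\setminus\mathcal Z(B)$, not as an identity of rational functions on $\mathbb D$. Consequently, pole matching is unavailable: the right-hand side is an integral against a measure $\mu$ that may a priori have continuous support, and one cannot read off poles from values on finitely many points. Your assertion that ``positivity of $\mu_\gamma$ precludes pole cancellation'' implicitly assumes the equation holds for $z$ near $1/\overline{\lambda_j}$, which is not known.

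The paper bypasses this with a different mechanism. Lemma~\ref{lem:DJM-4.3} writes the left side on $S'$ as $\sum_j a_j(z)a_j(w)^*$ with each $a_j$ in the span of a fixed finite kernel set $K$. Choosing $X\subset S'$ of the right size, one produces a nonzero $c=(c(z))_{z\in X}$ orthogonal to the span of $(k(z))_{z\in X}$ for $k\in K$; this forces $\sum_{z\in X} c(z)\gamma^* a_j(z)=0$ for all $j,\gamma$, and hence
\[
  0=\int\Bigl|\sum_{z\in X}K_\lambda(z)c(z)\Bigr|^2\,d\nu_\gamma(\psi_\lambda),
\]
so $(K_\lambda(z))_{z\in X}\perp c$ for $\nu_\gamma$-a.e.\ $\lambda$. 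This places $k_\lambda|_X$ in the span of $K|_X$, and since $|X|$ exceeds the relevant polynomial degree, a degree count promotes this to a genuine rational identity in $z$, which then pins $\lambda$ to $\{p_1,p_2\}$ (with the extra $Bm_{p_1}m_{p_2}$ option for $\mathcal A_B$). That promotion step is precisely what your pole argument lacks.

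For $\mathcal A_B^0$ there is an additional problem: you evaluate at $z=\alpha_0\in\mathcal Z(B)$, but Lemma~\ref{lem:DJM-5.5-2} explicitly excludes those points (it was obtained by dividing by $B(z)B(w)^*$ in \eqref{eq:28}). The paper kills $\mu(\{zB\})$ not by evaluation but as a free by-product of the display above, since its contribution is $\nu_\gamma(\{zB\})\|c\|^2\geq 0$. Your residue comparison for $\mu(\{B^{N-1}\})$ suffers from the same finite-set obstruction and is, as you concede, not actually carried out.
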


For the algebra $\mathcal A_B$ with $N=2$, the $Bm_{p_1}m_{p_2}$ term
will not be present.

\begin{proof}
  Write $\mu_0$ for the restriction of $\mu$ in
  Lemma~\ref{lem:DJM-5.5} to $\Psi \backslash \{B\}$.  For $\gamma \in
  \mathbb C^2$, define the scalar valued Borel measure $\nu_\gamma :=
  \gamma^* \mu_0 \gamma$ on $\Psi \backslash \{B\}$.  Then by
  Lemmas~\ref{lem:DJM-5.5},~\ref{lem:DJM-4.3} and~\ref{lem:DJM-5.2}
  there are $2n^2$ functions $r_j:S \to \mathbb C^2$ with ranges
  contained in $E = \{k_{p_1} v_1, k_{p_2} v_2\}$ if the algebra is
  $\mathcal A_B$ ($n=1$), or $E = \{k_{p_1} v_1, {\tilde k}_1 v_1 ,
  \dots , {\tilde k}_{(n-1)N} v_1 \} \cup \{k_{p_2} v_2, {\tilde k}_1
  v_2 , \dots , {\tilde k}_{(n-1)N} v_2 \}$ in case of $\mathcal
  A_B^0$ ($n=N-1$), such that
  \begin{equation*}
    \gamma^*\left( \sum_{j=1}^{2n^2} r_j(z)r_j(w)^* \right) \gamma
    = \int_{\Psi \backslash \{B\}} K_\lambda(z)K_\lambda(w)^* \,
    d\nu_\gamma(\psi_\lambda).
  \end{equation*}

  Fix a set of $(n-1)^2+3$ non-zero points $X = \{z_j\} \subset S
  \backslash \zb$.  Using the notation introduced just before
  Lemma~\ref{lem:DJM-4.3}, set $K = \{{\tilde k}_j\}\cup
  \{k_{p_1},k_{p_2}\}$ for $\mathcal A_B^0$ and $K =
  \{k_{p_1},k_{p_2}\}$ for $\mathcal A_B$.  Define a codimension $1$
  subspace $V := \bigvee_{k\in K} (k(z))_{z\in X}$ in $\mathbb
  C^{(n-1)^2+3}$, and let $c = (\overline{c(z)})$ be a unit vector
  orthogonal to $V$.  Suppose that one of the entries of $c$ is zero.
  Take it to be $c(z_{(n-1)^2+3})$, reordering $X$ if necessary.  Then
  the vector $(\overline{c(z_j)})_1^{(n-1)^2+2} \in \mathbb
  C^{(n-1)^2+2}$ is orthogonal to $\bigvee_{k\in K}
  (k(z_j))_1^{(n-1)^2+2}$.  Since the latter spans $\mathbb
  C^{(n-1)^2+2}$, $c=0$, giving a contradiction.  So no $c(z_j)$ is
  $0$.

  Any $\gamma \in\mathbb C^2$ is in the span of the dual basis
  $\{w_1,w_2\}$ to $\{v_1,v_2\}$, and so for any $\gamma$,
  \begin{equation*}
    \sum_{z,w\in X} c(z)\gamma^*\left( \sum_{j=1}^{2n^2}
      r_j(z)r_j(w)^* \right) \gamma c(w)^* = 0.
  \end{equation*}
  Thus
  \begin{equation*}
    0 = \int_{\Psi \backslash \{B\}} {\left| \sum_{z\in X} 
        \psi_\lambda(z)c(z) \right|}^2 \, d\nu_\gamma(\psi_\lambda).
  \end{equation*}
  For $\mathcal A_B^0$ with $N > 2$, this has the form
  \begin{equation*}
    0 = \mu(\{zB\}) \|c\|^2 + \int_{\Psi \backslash \{B,zB\}} {\left|
        \sum_{z\in X} K_\lambda(z)c(z) \right|}^2 \,
    d\nu_\gamma(\psi_\lambda),
  \end{equation*}
  an immediate consequence of which is that $\mu(\{zB\}) = 0$.  Here,
  $K_\lambda = K_{B^{N-3}m_{\lambda}}$.  Otherwise, for $\mathcal
  A_B$,
  \begin{equation*}
    0 = \int_{\Psi \backslash \{B\}} {\left|
        \sum_{z\in X} K_\lambda(z)c(z) \right|}^2 \,
    d\nu_\gamma(\psi_\lambda),
  \end{equation*}
  where now if $\psi_\lambda = B G$, $G$ a Blaschke product,
  $K_\lambda = K_G$.

  For both algebras, it follows that
  \begin{equation*}
    \sum_{z\in X} K_\lambda(z)c(z) = 0 \qquad \nu_\gamma-a.e..
  \end{equation*}
  Let $\zb = \{\alpha_1,\dots,\alpha_N\}$.  Recall that for $\mathcal
  A_B$, $\lambda \in {\hat{\mathbb D}}^{N-1}$ and $K_\lambda$ is the
  vector $(\prod_{s=1}^{i-1} m_{\lambda_s} k_{\lambda_i})_{i=1}^N$,
  with the product absent when $i = 1$.  For $\mathcal A_B^0$,
  $\lambda \in \mathbb D$ and $K_\lambda = \begin{pmatrix}
    k_{\alpha_1} & \cdots & B^{N-2}k_\lambda \end{pmatrix}$.  In both
  cases, for $\nu_\gamma$ almost every $\psi_\lambda$, for every $i$
  and for $\lambda = \lambda_1$,
  \begin{equation*}
    \sum_{z\in X} B^{n-1}(z) k_\lambda \in V.
  \end{equation*}
  Thus for $z\in X$,
  \begin{equation*}
    B^{n-1}(z) k_\lambda (z) = \sum_{k\in K} c_k k(z).
  \end{equation*}
  Clearing the denominators, this gives for $i=0$,
  \begin{equation*}
    \begin{split}
      0 &= \prod (z-\alpha_\ell)^{n-1} (1-{\overline{p}}_1 z)
    (1-{\overline{p}}_2 z) \\ \quad &- (1-{\overline{\lambda}}_1 z)
    \left( a(z)(1-{\overline{p}}_1 z) (1-{\overline{p}}_2 z)  +
    (c_1(1-{\overline{p}}_1 z) + c_2 (1-{\overline{p}}_2 z)) \prod
    (1-{\overline{\alpha}}_\ell z)^{n-1} \right),
    \end{split}
  \end{equation*}
  where $a$ is a polynomial of degree at most $(n-1)^2$ and $c_1,c_2$
  are constants.  Therefore we have a polynomial of degree at most
  $(n-1)^2+2$ which is zero at $(n-1)^2+3$ distinct points, and so
  must be identically zero.  Thus the equation holds for all $z\in
  \mathbb C$.

  Rewrite this as
  \begin{equation*}
    \begin{split}
      & \left( \prod (z-\alpha_\ell)^{n-1} - a(z)
        (1-{\overline{\lambda}}_1 z) \right) (1-{\overline{p}}_1 z)
      (1-{\overline{p}}_2 z) \\
      = & \prod (1-{\overline{\alpha}}_\ell z)^{n-1}
      (c_1(1-{\overline{p}}_1 z) + c_2 (1-{\overline{p}}_2 z))
      (1-{\overline{\lambda}}_1 z).
    \end{split}
  \end{equation*}
  The left side has zeros at $1/{\overline{p}}_1$ and
  $1/{\overline{p}}_2$, so the right must as well, implying that for
  a constant $\tilde c$,
  \begin{equation*}
    (c_1(1-{\overline{p}}_1 z) + c_2 (1-{\overline{p}}_2 z))
    (1-{\overline{\lambda}}_1 z) = \tilde c (1-{\overline{p}}_1 z)
    (1-{\overline{p}}_2 z).
  \end{equation*}
  If $\tilde c = 0$, then $c_1 = c_2 = 0$.  This forces $\prod
  (z-\alpha_\ell)^{n-1} = a(z) (1-{\overline{\lambda}}_1 z)$, which is
  impossible.  So $\tilde c \neq 0$, and $\lambda = p_1$ and $c_2 =
  0$, or $\lambda = p_2$ and $c_1 = 0$.  In the case of $\mathcal
  A_B^0$, there is nothing further to prove.

  For $\mathcal A_B$, assume $\lambda_1 = p_1$, and suppose that
  \begin{equation*}
    m_{p_1}k_{\lambda_2} = c_1 k_{p_1} + c_2 k_{p_2}.
  \end{equation*}
  Clearing the denominators gives a quadratic polynomial which is zero
  at the three points of $X$, and hence in all of $\mathbb C$.
  Therefore this equation holds meromorphically in $\hat{\mathbb C}$.
  If $c_2 = 0$, then $\lambda_2 = \infty$, which is the case already
  considered.  Likewise it may be assumed that $c_1 \neq 0$.  Thus
  both sides have poles at $1/{\overline{p}}_1$ and
  $1/{\overline{p}}_2$; that is, $\lambda_2 = p_2$.  The case where
  $\lambda_1 = p_2$ is identical, and then $\lambda_2 = \infty$ or
  $p_1$.

  Next assume that $B$ has $3$ or more zeros and
  \begin{equation*}
    m_{p_1}m_{p_2}k_{\lambda_3} = c_1 k_{p_1} + c_2 k_{p_2}.
  \end{equation*}
  Clearing denominators again gives a quadratic polynomial which is
  zero at the three points of $X$, and hence in all of $\mathbb C$.
  The equation thus holds meromorphically in $\hat{\mathbb C}$.
  Examining the poles, it is clear that the only possibility is for
  $\lambda_3 = \infty$, and so this reduces to the last case
  considered.

  Consequently, for $\mathcal A_B$ there are at most three distinct
  possibilities, where exactly one $\lambda_j = p_1$ and the rest are
  $\infty$, where exactly one $\lambda_j = p_2$ and the rest are
  $\infty$, and where one $\lambda_j = p_1$ another equals $p_2$ and
  the rest are $\infty$.
\end{proof}

\begin{theorem}
  \label{thm:rational-dilation-fails}
  Suppose that $\Phi = B(z)R(z) \in M_2\otimes \mathcal A_B$ and $\Phi
  = B(z)^{N-1}R(z) \in M_2\otimes \mathcal A^0_B$, with $R$ chosen as
  in \eqref{eq:26}, and $B$ having two or more distinct zeros all with
  the same multiplicity in the latter case.  Then
  $\Delta_{\Phi,S}\notin C_{2,S}$.  Consequently, there are
  contractive unital representations of $\mathcal A_B$ and $\mathcal
  A_B^0$ which are not $2$-contractive, and hence not completely
  contractive.
\end{theorem}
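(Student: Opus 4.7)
My plan is to show $\Delta_{\Phi,S}\notin\mathcal{C}_{2,S}$ by contradiction; once this is established, the non-$2$-contractive representation follows from the standard Hahn--Banach separation and GNS construction recalled at the start of this section (cf.\ Proposition~3.5 of~\cite{MR3584680}).

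The first step is to exploit the chain of structural lemmas just proved. Assuming $\Delta_{\Phi,S}\in\mathcal{C}_{2,S}$, Lemma~\ref{lem:DJM-5.3} forces $A=[I_2]$ (which is where the hypotheses on $B$ in the $\mathcal{A}_B^0$ case are used), after which Lemma~\ref{lem:DJM-5.5} (respectively Lemma~\ref{lem:DJM-5.5-2}) realizes $(I_2-R(z)R(w)^*)/(1-zw^*)$ (respectively its $B^{N-2}$-twisted analogue) as an integral $\int K_\lambda(z)K_\lambda(w)^*\,d\mu(\psi_\lambda)$ for a positive $M_2(\mathbb{C})$-valued measure $\mu$. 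Lemmas~\ref{lem:DJM-4.5} and~\ref{lem:DJM-5.6} then concentrate $\mu$ on at most four atoms with positive semidefinite matrix weights: for $\mathcal{A}_B$ these are $B$, $Bm_{p_1}$, $Bm_{p_2}$ and (when present) $Bm_{p_1}m_{p_2}$, while for $\mathcal{A}_B^0$ only the $B^{N-1}$-twisted analogues of $Bm_{p_1}$ and $Bm_{p_2}$ occur. Crucially, every vector $K_\lambda$ at the relevant atoms is built from the three scalar functions $k_{p_1}$, $k_{p_2}$ and $m_{p_1}k_{p_2}$ (times a common $B^{N-2}$ factor in the $\mathcal{A}_B^0$ case), so each contribution to the $(1,2)$-entry of the right-hand side is a complex multiple of a \emph{balanced} product $f(z)f(w)^*$ with $f\in\{k_{p_1},k_{p_2},m_{p_1}k_{p_2}\}$.

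The next step is a direct computation of the left-hand side. Writing $R(z)=A(z)UB(z)$ with $A(z)=\mathrm{diag}(m_{p_1}(z),1)$ and $B(z)=\mathrm{diag}(1,m_{p_2}(z))$, the identity
\begin{equation*}
  I_2-R(z)R(w)^*=(I_2-A(z)A(w)^*)+A(z)U(I_2-B(z)B(w)^*)U^*A(w)^*
\end{equation*}
together with the explicit form of $U$ shows that the $(1,2)$-entry of $(I_2-R(z)R(w)^*)/(1-zw^*)$ equals $-\tfrac{1}{2}\,m_{p_1}(z)k_{p_2}(z)k_{p_2}(w)^*$. The M\"obius factor $m_{p_1}$ appears in the $z$-variable only, producing an \emph{unbalanced} product $f_3(z)f_2(w)^*$ with $f_2=k_{p_2}$ and $f_3=m_{p_1}k_{p_2}$. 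By the genericity assumption on $S'$, the nine kernels $\{f_i(z)f_j(w)^*\}_{i,j\in\{1,2,3\}}$ are linearly independent on $S'\times S'$, so matching the two expressions in this basis forces the coefficient of $f_3(z)f_2(w)^*$ to be simultaneously $-\tfrac{1}{2}$ and $0$, the desired contradiction.

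The main obstacle I anticipate is not Step~2 (a one-line computation once $R=AUB$ is unwound) but the careful bookkeeping for $\mathcal{A}_B^0$: the left-hand side there carries an extra factor $B^{N-2}(z)B^{N-2}(w)^*$ and the atoms of $\mu$ sit on $B^{N-1}$-twisted test functions rather than on $B$-twisted ones. Both points are harmless in principle---$B^{N-2}$ is nonzero on $S'$ and can be cleanly stripped, while the $B^{N-1}$-twisted $K_\lambda$ vectors still have their components in the span of $\{B^{N-2}k_{p_1},B^{N-2}k_{p_2},B^{N-2}m_{p_1}k_{p_2}\}$---but I will need to verify that the resulting family of nine rank-one kernels remains linearly independent on $S'\times S'$, so that the $(1,2)$-entry comparison carries over verbatim. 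With this in hand, Hahn--Banach separation of $\Delta_{\Phi,S}$ from $\mathcal{C}_{2,S}$ and the GNS construction produce the desired contractive, not $2$-contractive, unital representation on each algebra.
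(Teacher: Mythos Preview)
Your overall architecture is the same as the paper's: assume $\Delta_{\Phi,S}\in\mathcal C_{2,S}$, invoke Lemma~\ref{lem:DJM-5.3} to get $A=[I_2]$, then Lemmas~\ref{lem:DJM-5.5}/\ref{lem:DJM-5.5-2} and \ref{lem:DJM-5.6} to concentrate $\mu$ on the prescribed atoms. Your computation of the $(1,2)$-entry of $(I_2-R(z)R(w)^*)/(1-zw^*)$ as $-\tfrac12\,m_{p_1}(z)k_{p_2}(z)k_{p_2}(w)^*$ is also correct.

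The gap is in your final step. You claim that the nine kernels $\{f_i(z)f_j(w)^*\}_{i,j\in\{1,2,3\}}$ with $f_1=k_{p_1}$, $f_2=k_{p_2}$, $f_3=m_{p_1}k_{p_2}$ are linearly independent on $S'\times S'$. They are not, and this is not a matter of genericity of $S'$: already as meromorphic functions on $\mathbb D$, $f_3$ lies in $\operatorname{span}\{f_1,f_2\}$. Indeed $m_{p_1}(z)k_{p_2}(z)=\dfrac{(z-p_1)\sqrt{1-|p_2|^2}}{(1-\overline{p_1}z)(1-\overline{p_2}z)}$ has only simple poles at $1/\overline{p_1}$ and $1/\overline{p_2}$ and vanishes at $\infty$, hence admits a partial-fraction expansion $A\,k_{p_1}+B\,k_{p_2}$. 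So the nine tensor products span only a four-dimensional space, and you cannot read off the coefficient of the ``unbalanced'' term $f_3(z)f_2(w)^*$ separately from the ``balanced'' ones. In other words, the right-hand side can in principle produce nonzero $f_1(z)f_2(w)^*$ and $f_2(z)f_1(w)^*$ components once you rewrite $f_3(z)f_3(w)^*$ in the $\{f_1,f_2\}$-basis, and your coefficient-matching argument collapses.

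The paper avoids this entirely: after Lemma~\ref{lem:DJM-5.6} one has (for $\mathcal A_B$) $R(z)R(w)^*=P_\infty+m_{p_1}(z)m_{p_1}(w)^*P_1+m_{p_2}(z)m_{p_2}(w)^*P_2+m_{p_1}m_{p_2}(z)(m_{p_1}m_{p_2}(w))^*P_{12}$, and simply evaluates at $(z,w)=(p_1,p_2)$, where every term but $P_\infty$ vanishes. The resulting matrix $R(p_1)R(p_2)^*=\tfrac12\begin{pmatrix}0&0\\ \overline{m_{p_1}(p_2)}&1\end{pmatrix}$ is not even Hermitian (since $p_1\neq p_2$), contradicting $P_\infty\geq 0$. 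For $\mathcal A_B^0$ the same evaluation gives $0$ on the right but a nonzero matrix on the left. This point-evaluation trick is both simpler and immune to the linear-dependence issue above; you should replace your coefficient comparison with it.
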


\begin{proof}
  From Lemma~\ref{lem:DJM-5.6}, for $\mathcal A_B$,
  \begin{equation*}
    R(z)R(w)^* = m_{p_1}(z)m_{p_1}(w)^* P_1 +
    m_{p_2}(z)m_{p_2}(w)^* P_2 +
    m_{p_1}(z)m_{p_2}(z)m_{p_2}(w)^*m_{p_1}(w)^* P_{12} +
    P_{\infty}
  \end{equation*}
  Hence
  \begin{equation*}
    R(p_1)R(p_2)^* = \frac{1}{2}
      \begin{pmatrix}
        0 & 0 \\ m_{p_1}(p_2)^* & 1
      \end{pmatrix}
      = P_\infty \geq 0.
  \end{equation*}
  Thus $p_2 = p_1$, a contradiction.

  For $\mathcal A_B^0$, by Lemma~\ref{lem:DJM-5.6},
  $B^{N-1}(z)R(z)R(w)^*B^{N-1}(w)^* = m_{p_1}(z)m_{p_1}(w)^* P_1 +
  m_{p_2}(z)m_{p_2}(w)^* P_2$.  This time, evaluating at $z= p_1$,
  $w=p_2$ gives
  \begin{equation*}
    0 = \frac{1}{2} B^{N-2}(p_1)B^{N-2}(p_2)^* 
    \begin{pmatrix}
      0 & 0 \\ m_{p_1}(p_2)^* & 1
    \end{pmatrix},
  \end{equation*}
  which is only possible if $p_1$ or $p_2 \in \zb$, contrary to
  assumption.

  Thus $\Delta_{\Phi,S}\notin C_{2,S}$.
\end{proof}

\begin{corollary}
  \label{cor:no-rat-dil}
  For any finite Blaschke product $B$ with two or more zeros, rational
  dilation fails over the distinguished variety $\mathcal V_B
  \subseteq {\overline{\mathbb D}}^N$.  If $B$ has two or more
  distinct zeros all of the same multiplicity, rational dilation fails
  over the distinguished variety $\mathcal N_B \subseteq
  {\overline{\mathbb D}}^2$.
\end{corollary}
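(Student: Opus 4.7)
The plan is to derive the corollary as an immediate consequence of Theorem~\ref{thm:rational-dilation-fails}, combined with the complete isometric identifications of the function algebras established in Theorem~\ref{thm:algebras-isom-isom} and the standard reformulation of rational dilation in terms of complete contractivity discussed in Section~\ref{sec:rati-dilat-probl}. So all the hard work will already have been done; what remains is a routine transfer of the non-complete-contractivity from the constrained algebras to the function algebras on the varieties.

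First I would invoke Theorem~\ref{thm:rational-dilation-fails}, which, under the stated hypotheses on $B$, furnishes contractive unital representations of $\mathcal A_B$ (respectively $\mathcal A_B^0$) that are not $2$-contractive, and hence not completely contractive. Next I would compose these with the complete isometric isomorphisms $A(\mathcal V_B) \cong \mathcal A_B$ and $A(\mathcal N_B) \cong \mathcal A_B^0$ provided by Theorem~\ref{thm:algebras-isom-isom}. Because a complete isometric isomorphism preserves both contractivity and $n$-contractivity at every level, the pulled-back representations remain contractive and continue to fail $2$-contractivity, yielding contractive unital representations of $A(\mathcal V_B)$ and $A(\mathcal N_B)$ which are not completely contractive.

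Finally, I would invoke the equivalence, noted in Section~\ref{sec:rati-dilat-probl} and originally due to Arveson, that rational dilation holds on $\mathcal V$ if and only if every contractive unital representation of $A(\mathcal V)$ is completely contractive (equivalently, every contractive representation admits a normal boundary dilation). The representations produced in the previous step therefore directly witness the failure of rational dilation on $\mathcal V_B$ under the hypothesis $|\mathcal Z(B)| \geq 2$, and on $\mathcal N_B$ under the hypothesis that $B$ has two or more distinct zeros all of common multiplicity. There is no genuine obstacle to overcome at this stage; the only bookkeeping required is to keep the two sets of hypotheses on $B$ straight when passing between $\mathcal A_B$, $\mathcal V_B$ and $\mathcal A_B^0$, $\mathcal N_B$, and to note that since we have produced representations that fail even to be $2$-contractive, the stronger assertion ``fails'' (rather than just ``is not known to hold'') is justified.
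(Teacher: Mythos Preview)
Your proposal is correct and matches the paper's approach: the corollary is stated without proof, following immediately from Theorem~\ref{thm:rational-dilation-fails} via the complete isometric isomorphisms of Theorem~\ref{thm:algebras-isom-isom} and the Arveson equivalence recalled in Section~\ref{sec:rati-dilat-probl}. There is nothing to add.
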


\section{Conclusion}
\label{sec:conclusion}

As mentioned at the end of Section~\ref{sec:dist-vari-assoc}, for
$2\leq n \leq N$, and a Blaschke product $B$ with $N$ zeros, one can
consider the algebra generated by $B, Bz, \dots, Bz^n$.  This will be
completely isometrically isomorphic to an algebra of holomorphic
functions on a distinguished variety in ${\overline{\mathbb D}}^n$,
and arguments similar to those given here can be used to study such
algebras and the rational dilation problem over the attendant
varieties.

We speculate that rational dilation fails on $\mathcal A(\mathcal
N_B)$ even without the restrictions imposed here.  There are a few
cases we have been able to verify (for example, $B$ with three zeros,
two of which are the same).  However, what on the surface should be
the easiest case ($B$ has three or more zeros which are all the same)
resists our approach, at least with the $\Phi$ used here.

For both $\mathcal A(\mathcal N_B)$ and $\mathcal A(\mathcal V_B)$ (as
well as the other algebras mentioned), there is thus a hierarchy of
unital representations.  The nicest, though smallest class, are those
which are completely contractive.  Next are the contractive
representations, a class that in a small coterie of examples (eg,
simply connected planar sets with smooth boundaries, the bidisk,
annuli, the symmetrized bidisk~\cite{MR3641771}) agrees with the
completely contractive representations.  In the context of this paper,
it is however strictly larger, since there are examples of contractive
representations which are not $2$-contractive, much as with the
tri-disk~\cite{MR0268710}.  It is natural to wonder if there are
$2$-contractive representations which are not $3$-contractive, and so
on.  Possibly some of the ideas presented here, along with the work of
Ball and Guerra Huam\'an~\cite{MR3138369,MR3057412}, could enable the
construction of a minimal set of test functions for $M_2(\mathbb
C)\otimes\mathcal A(\mathcal N_B)$ and $M_2(\mathbb C)\otimes\mathcal
A(\mathcal V_B)$, which would be a first step in analyzing this
question.

Function algebras on distinguished varieties are intimately connected
to function algebras on multiply connected
domains~\cite{MR0241629,MR2710343} (see also~\cite{MR3584680} and
\cite{MR2661491}).  Perhaps the techniques employed here will enable
the extension of the results in~\cite{MR2643788} to general multiply
connected domains.

Finally, there is the class of bounded unital representations which
send the algebra generators to contractions.  As in the case of the
tridisk~\cite{MR0355642}, these are seen to comprise a strictly larger
class of representations than that of the contractive representations
of $\mathcal A(\mathcal V)$ for the varieties $\mathcal V$ we
considered here.  By comparison, for the disk, bidisk and symmetrized
bidisk, such representations are automatically contractive, and so as
noted, completely contractive.  This is not universal though.
Consider an annulus $\mathbb A$ (assumed without loss of generality to
be centered at $0$ with outer radius $1$ and inner radius $r$).  Any
minimal set of test functions over this set is infinite, so there are
representations sending the generators $z$ and $r/z$ of $A(\mathbb A)$
to contractions which are not contractive representations.  On the
other hand, there is a uniform bound for the norm growth in this case,
since the von~Neumann inequality holds up to multiplication of the
function norm by $K> 1$ (see~\cite{MR862189,MR2449098}, as well as
\cite{MR3653945} and \cite{MR3656275}).  This relation between
spectral and complete $K$-spectral sets is another area worth
exploring in $\mathcal A(\mathcal V)$.

\end{document}